\documentclass[reqno]{amsart}

\usepackage[normalem]{ulem}

\usepackage{amsmath,amssymb,color}
\usepackage{amsfonts, amscd, epsfig, amsmath, amssymb,enumerate, bbm}
\usepackage{graphicx}
\usepackage{graphics}
\usepackage{stackengine}
\usepackage{color}
\usepackage{mathrsfs}
\usepackage{stmaryrd}
\usepackage{todonotes}
\usepackage{soul}
\usepackage{dsfont} 
\usepackage{BOONDOX-calo} 
\usepackage{enumitem} 


 \usepackage[usenames,dvipsnames]{pstricks}
 \usepackage{pstricks-add}
 \usepackage{epsfig}
 \usepackage{pst-grad} 
 \usepackage{pst-plot} 
 \usepackage[space]{grffile} 
 \usepackage{etoolbox} 
\usepackage[margin=3.5cm]{geometry}
 \makeatletter 
 \patchcmd\Gread@eps{\@inputcheck#1 }{\@inputcheck"#1"\relax}{}{}
 \makeatother

\newtheorem{theorem}{Theorem}[section]
\newtheorem{lemma}[theorem]{Lemma}
\newtheorem{proposition}[theorem]{Proposition}
\newtheorem{corollary}[theorem]{Corollary}
\newtheorem{remark}[theorem]{Remark}

\newtheorem{definition}{Definition}[section]

\usepackage{tikz}
\usetikzlibrary{backgrounds}
\usetikzlibrary{patterns,fadings}
\usetikzlibrary{arrows,decorations.pathmorphing}
\usetikzlibrary{decorations}
\usetikzlibrary{calc}
\usetikzlibrary{shapes.misc}

\usepackage{setspace}

\setstretch{1.2}
\definecolor{navyblue}{rgb}{0.0, 0.0, 0.5}
\definecolor{light-gray}{gray}{0.95}
\definecolor{green}{RGB}{51,204,0}
\definecolor{red}{RGB}{211,0,0}
\definecolor{blue}{RGB}{0,59,210}
\usepackage{float}
\usepackage[colorlinks=true,linkcolor=blue,citecolor=purple,urlcolor=navyblue]{hyperref}
\def\centerarc[#1](#2)(#3:#4:#5){\draw[#1] ($(#2)+({#5*cos(#3)},{#5*sin(#3)})$) arc (#3:#4:#5);}

\newcommand{\mm}[1]{{\color{magenta}#1}}

\newcommand{\mc}[1]{{\mathcal #1}}

\newcommand{\bb}[1]{{\mathbb #1}}
\newcommand{\bs}[1]{{\boldsymbol #1}}

\renewcommand{\epsilon}{\varepsilon}

\newcommand{\R}{\mathbb R}

\newcommand{\Z}{\mathbb Z}
\newcommand{\N}{\mathbb N}
\newcommand{\Prob}{\mathbb P}

\newcommand{\E}{\mathbb E}
\newcommand{\Q}{\mathbb Q}

\newcommand{\diff}{\,\mathrm{d}}
\newcommand{\ind}{\mathds{1}}
\newcommand{\gene}{\mathcal{L}}

\renewcommand{\bar}{\overline}
\renewcommand{\tilde}{\widetilde}

\newcommand{\normm}[1]{{\left\vert\kern-0.1ex\left\vert\kern-0.1ex\left\vert\; #1 \; \right\vert\kern-0.1ex\right\vert\kern-0.1ex\right\vert}}

\newcommand{\dlangle}{\langle\!\langle}
\newcommand{\drangle}{\rangle\!\rangle}
\newcommand{\bulletx}{\mbox{\stackunder[1pt]{$\bullet$}{\tiny $x$}}}
\newcommand{\bulletone}{\mbox{\stackunder[1pt]{$\bullet$}{\tiny $1$}}}
\newcommand{\circx}{\mbox{\stackunder[1pt]{$\circ$}{\tiny $x$}}}
\newcommand{\circone}{\mbox{\stackunder[1pt]{$\circ$}{\tiny $1$}}}
\newcommand{\circs}{\mbox{$\circ$}}
\newcommand{\bullets}{\mbox{$\bullet$}}

\renewcommand{\leq}{\leqslant}
\renewcommand{\geq}{\geqslant}
\renewcommand{\le}{\leqslant}
\renewcommand{\ge}{\geqslant}
\newcommand{\eq}[2]{\begin{equation}\label{#1}#2\end{equation}}

\definecolor{MSBlue}{rgb}{.15,.0.35,.85}
\newcommand{\nota}[1]{{\color{MSBlue}#1}}
\newcommand{\act}{\mathfrak{a}}
\newcommand{\integers}[2]{{\llbracket #1 ,#2\rrbracket}}

\allowdisplaybreaks 

\newcommand\blfootnote[1]{%
	\begingroup
	\renewcommand\thefootnote{}\footnote{#1}%
	\addtocounter{footnote}{-1}%
	\endgroup
}

%
%
\title[Hydrodynamics for a facilitated exclusion process with slow/fast boundaries]{Hydrodynamic limit for an open Facilitated Exclusion Process with slow and fast boundaries}

\author{Hugo Da Cunha}
\address{Hugo Da Cunha -- Department of Mathematics, \'Ecole Normale Sup\'erieure (ENS) de Lyon,  69007 Lyon,  France}
\email{\href{mailto:hugo.da_cunha@ens-lyon.fr}{hugo.da\_cunha@ens-lyon.fr}}
\author{Cl\'ement Erignoux}
\address{Cl\'ement Erignoux -- Inria DRACULA, ICJ UMR5208, CNRS, \'Ecole Centrale de Lyon, INSA Lyon, Université Claude Bernard Lyon 1,
Université Jean Monnet, 69603 Villeurbanne, France.}
\email{\href{mailto:clement.erignoux@inria.fr}{clement.erignoux@inria.fr}}
\author{Marielle Simon}
\address{Marielle Simon -- Université Claude Bernard Lyon 1, ICJ UMR5208, CNRS, \'Ecole Centrale de Lyon, INSA Lyon, Université Jean Monnet,
69622 Villeurbanne, France \emph{and} GSSI, Viale Francesco Crispi 7, 67100 L'Aquila, Italy.}
\email{\href{mailto:msimon@math.univ-lyon1.fr}{msimon@math.univ-lyon1.fr}}

\begin{document}

\begin{abstract}
We study the symmetric \emph{facilitated exclusion process} (FEP) on the finite one-dimensional lattice $\lbrace 1,\hdots, N-1\rbrace$ when put in contact with boundary reservoirs, whose action is subject to an additional kinetic constraint in order to enforce ergodicity, and whose speed is of order $N^{-\theta}$ for some parameter $\theta$. We derive its hydrodynamic limit as $N\to\infty$, in the diffusive space-time scaling, when the initial density profile is supercritical. More precisely, the macroscopic density of particles evolves in the bulk according to a fast diffusion equation as in the periodic case, which is now subject to boundary conditions that can be of Dirichlet, Robin or Neumann type depending on the parameter $\theta$. In the Dirichlet case, the FEP exhibits a very peculiar behaviour: unlike for the classical SSEP, and due to the two-phased nature of FEP, the reservoirs impose boundary densities which do not coincide with their equilibrium densities. The proof is based on the classical entropy method, but requires significant adaptations to account for the FEP's non-product stationary states and to deal with the non-equilibrium setting.
\end{abstract}

\maketitle


\section{Introduction}

\blfootnote{\textsc{Acknowledgements:} We warmly thank Kirone Mallick for very enlightening discussions about the stationary states of the FEP in contact with stochastic reservoirs. We also thank the anonymous referee who highlighted to us the most natural physical mechanism to consider at the boundaries.}

\subsection{The facilitated exclusion process}

Over the last century, there has been a rapidly growing interest in describing macroscopic features of the physical world at the
microscopic level. In particular, a variety of models has been introduced to describe the evolution of a \emph{multiphased media}, as for instance the joint evolution of liquid and solid phases. Such complex phenomena often feature
absorbing phase transitions, which have been closely investigated by both physicists and mathematicians over the last decades.

In particular, the class of \emph{kinetically constrained stochastic lattice gases}, which has been put forward in the 80's (see \textit{e.g.}~\cite{ritortS2003glassy} for a review), is known to accurately illustrate some microscopic mechanisms at the origin of liquid/solid interfaces. In these systems, particles are situated on the sites of a discrete lattice, and jump at random times to neighbouring sites, following microscopic rules: we consider here in particular the \emph{exclusion rule}, which prevents two particles from being on the same site, and an additional \emph{kinetic constraint}, which makes a given jump possible or not depending on the \emph{local} configuration around the jump edge. Such kinetically constrained lattice gases can be seen as the Kawasaki-type counterparts to the Glauber-type non-conservative kinetically constrained spin models (see \textit{e.g.}~\cite{biroli2007} and \cite{reviewKCM} for a more exhaustive review), whose dynamics involves particle creation/annihilation rather than jumps.

\medskip

One of these models, called \emph{facilitated exclusion process} (FEP) has been proposed by physicists in \cite{RossiPastorVespignani00}
and further investigated by physicists and mathematicians in, for instance, \cite{lubeck, oliveira, BasuMohanty09, BaikBarraquandCorwinToufic16,  Chen2019, blondel2020hydrodynamic, blondel2021stefan, Goldstein_2024}. 
The \emph{symmetric one-dimensional FEP} on the discrete lattice $\Lambda \subset \Z$ is defined as follows: it is an exclusion process, meaning that each site is either empty or occupied by one particle. Besides, a particle is considered  \emph{active} if at least one of its two neighbouring sites is occupied. Then,  active particles jump randomly at rate 1 to any empty nearest neighbour. Because of the kinetic constraint, the FEP exhibits a phase separation with critical density $\rho_c=\frac12$: more precisely, it remains active at supercritical densities $\rho>\frac12$ (\textit{i.e.} there is always at least an active particle in the system), whereas if $\rho<\frac12$, it   reaches an absorbing state after some \emph{transience time}\footnote{See also Section \ref{subsec:transience} for more detailed explanations., \textit{i.e.}~the final particle configuration has no active particle}.
The FEP  is \emph{cooperative}, in the sense that there is no \emph{mobile cluster}\footnote{A mobile cluster is a set of particles able to move autonomously in the system under the kinetic constraint, which provides strong local mixing for the system.} of particles in
the system. The cooperative nature of the FEP
distorts its equilibrium measures, which are no longer product, thus generating significant mathematical difficulties. Nevertheless, in the supercritical regime, the grand-canonical states $\pi_\rho$
are explicit, and supported by the \emph{ergodic component}, namely the set of configurations where each empty site is surrounded by particles. Those grand-canonical states $\pi_\rho$ are translation invariant and can be defined sequentially, through a Markovian construction, by filling an arbitrary site with probability $\rho>\frac12$, and following each particle by another particle with probability $\mathfrak{a}(\rho)=(2\rho-1)/\rho$. This quantity $\mathfrak{a}(\rho)$ represents the \emph{density of active particles}, namely particles with at least one occupied neighbour, in the system at density $\rho$. To make sure empty sites are isolated, each empty site is instead followed by a particle with probability $1$.

In \cite{blondel2020hydrodynamic, blondel2021stefan}, the hydrodynamic limit of the symmetric FEP with
periodic boundary conditions is derived, and takes the form of a Stefan problem (also called free boundary problem) with non-linear diffusion coefficient $D(\rho) = \mathfrak{a}'(\rho)\ind_{\{\rho>\frac12\}}$. In other words, the diffusive
supercritical phase (\emph{i.e.~}the macroscopic regions where the initial density profile satisfies $\rho^\mathrm{ini}(u)>\frac12$) progressively invades the initial frozen subcritical phase (where $\rho^\mathrm{ini}(u)<\frac12$), until one of the phases disappears (depending on the total mass $\int\rho^\mathrm{ini}$ of the initial profile being super- or sub-critical). 
For \emph{asymmetric jump rates} (namely the jump rate to the right is different from the one to the left), the hyperbolic 
Stefan problem hydrodynamic limit was derived in \cite{erignoux_mapping_2024}. More recently, the stationary macroscopic
equilibrium fluctuations have been characterized in the symmetric, weakly asymmetric and asymmetric cases in \cite{erignoux2023fluctuations}, and the transience time was studied in details in \cite{EM24}. All these results rely in parts on mapping arguments (\textit{i.e.}~the FEP is mapped onto an auxiliary process) which all fail in dimension
higher than $1$, and in the presence of boundaries. The stationary and absorbing states for the FEP were also
extensively studied both in the symmetric and asymmetric cases \cite{2019JSMTE, Goldstein2020, Goldstein2022, Chen2019}, and
once again rely on mapping arguments. 

\subsection{Effect of boundary interactions}

As the effect of boundary interactions on lattice gases has been under considerable scrutiny in recent years, it is now natural to investigate the macroscopic effect of boundary dynamics on the FEP.  Adding reservoir-type interactions at the extremities of microscopic systems is a classical way to induce boundary
conditions at the macroscopic level (\emph{e.g.}~in the hydrodynamic PDE), see for instance \cite{goncalves2019book} for a recent review
in the case of symmetric simple exclusion (SSEP). In turn, these boundary effects give access to the macroscopic non-equilibrium features of the model considered \cite{Derrida_2007,Derrida_2011}. In the FEP, particles  injected by  reservoirs may become blocked by the kinetic constraint, and therefore change
the effective stationary density imposed by the reservoirs, so that the effect of reservoir dynamics on the FEP is far from trivial.

\medskip

In this work, we consider the \emph{boundary-driven one-dimensional symmetric FEP} on the finite lattice $\Lambda_N:=\{1,\dots,N-1\}$, with two stochastic reservoirs at the extremities, whose dynamics is illustrated in Figure \ref{fig:dynamics}.  Precisely, let us fix $\alpha,\beta\in(0,1)$, and let $\theta\in\R$ and $\kappa>0$ be two parameters which  regulate the speed of the boundary dynamics. We assume the following: 
\begin{enumerate} \item the stochastic reservoirs at both ends inject particles at the boundary sites $1$ and $N-1$, if the latter are empty, at respective rates $\alpha\kappa N^{-\theta}$ and $\beta\kappa N^{-\theta}$ ;
	\item they can also remove boundary particles at sites $1$ and $N-1$, at respective rates $(1-\alpha)\kappa N^{-\theta}$, $(1-\beta)\kappa N^{-\theta}$, \emph{only if} the boundary particle is followed by another particle. For instance, see Figure \ref{fig:dynamics}: the particle situated at site $1$ in the middle of the bottom illustration  cannot be absorbed by the reservoir, since its neighbouring site $2$ is empty ;
	\item the particles in contact with reservoirs are always \emph{active}, meaning that if a particle is situated at one of the two extremities $x=1$ or $x=N-1$, then it  can always jump towards the bulk. Moreover, the rate to jump from site $1$ to $2$ is equal to $\alpha$, while the rate to jump from site $N-1$ to $N-2$ is equal to $\beta$. Equivalently, this mechanism can be interpreted as the boundary particles being active at any given time with respective probabilities  $\alpha$ and  $\beta$.
	\item for the particles situated between sites $x=2$ and $x=N-2$, the jump rates are the same as in the standard FEP, namely: a particle jump to an empty neighbour at rate $1$ provided that the other neighbour is occupied by another particle.	
	\end{enumerate} 

\begin{figure}
	\begin{tikzpicture}[scale=.9]
		\draw [-|] (0,0)--(10,0);
		\foreach \x in {0,...,10} \draw (\x,-.1)node[below]{\x} -- (\x,0.1);
		\shade[ball color=teal](1,.25) circle (0.2);
		\shade[ball color=teal](3,.25) circle (0.2);
		\shade[ball color=teal](4,.25) circle (0.2);
		\shade[ball color=teal](7,.25) circle (0.2);
		\shade[ball color=teal](9,.25) circle (0.2);
		\centerarc[thick,<-](1.5,0.4)(10:170:0.45);
		\draw (1.5,0.9) node[anchor=south] {${\color{red}\alpha}$};
		
		\centerarc[thick,->](2.5,0.4)(10:170:0.45);
		\draw (2.5,0.9) node[anchor=south] {$1$};
		\centerarc[thick,->, dashed](3.5,0.4)(10:170:0.45);
		\draw (3.5,.85) node[thick, red] {\LARGE $\times$};
		\centerarc[thick,<-](4.5,0.4)(10:170:0.45);
		\draw (4.5,0.9) node[anchor=south] {$1$};
		\centerarc[thick,->, dashed](6.5,0.4)(10:170:0.45);
		\draw (6.5,.85) node[thick, red] {\LARGE $\times$};
		\centerarc[thick,<-, dashed](7.5,0.4)(10:170:0.45);
		\draw (7.5,.85) node[thick, red] {\LARGE $\times$};
		\centerarc[thick,->](8.5,0.4)(10:170:0.45);
		\draw (8.5,0.9) node[anchor=south] {${\color{violet}\beta}$};
		
		
		\draw (-0.6,-0.7) arc (-80:80:0.7);
		\shade[ball color=black](-0.5,0.4) circle (0.2);
		\shade[ball color=black](-0.25,0.1) circle (0.2);
		\shade[ball color=black](-0.5,0) circle (0.2);
		\shade[ball color=black](-0.3,-0.2) circle (0.2);
		\shade[ball color=black](-0.5,-0.4) circle (0.2);
		\draw (-2.1,0) node[anchor=south] {\textsf{Left reservoir}};
		\draw (-2.1,-0) node[anchor=north] {\textsf{with density} ${\color{red}\alpha}$};
		\draw (10.6,0.7) arc (100:260:0.7);
		\shade[ball color=black](10.5,0.4) circle (0.2);
		\shade[ball color=black](10.5,0) circle (0.2);
		\shade[ball color=black](10.5,-0.4) circle (0.2);
		\shade[ball color=black](10.25,0.1) circle (0.2);
		\shade[ball color=black](10.3,-0.2) circle (0.2);
		\draw (12.1,0) node[anchor=south] {\sf Right reservoir};
		\draw (12.1,-0) node[anchor=north] {\sf with density ${\color{violet}\beta}$};
	\end{tikzpicture}
	
	\vspace{.3cm}
	
	\begin{tikzpicture}
		\draw [->] (0,0)--(2.5,0);
		\foreach \x in {0,...,2} \draw (\x,-.1)node[below]{\x } -- (\x,0.1);
		\draw[dashed] (2,0.25) circle (0.2);
		\centerarc[thick,<-](.5,0.4)(10:170:0.45);
		\draw (0.5,0.9) node[anchor=south] {${\color{red}\alpha} \kappa N^{-\theta}$};
		
		\draw (-0.6,-0.7) arc (-80:80:0.7);
		\shade[ball color=black](-0.5,0.4) circle (0.2);
		\shade[ball color=black](-0.25,0.1) circle (0.2);
		\shade[ball color=black](-0.5,0) circle (0.2);
		\shade[ball color=black](-0.3,-0.2) circle (0.2);
		\shade[ball color=black](-0.5,-0.4) circle (0.2);
		
		\draw [->] (5,0)--(7.5,0);
		\foreach \x in {0,...,2} \draw (\x +5,-.1)node[below]{\x } -- (\x +5,0.1);
		\centerarc[thick,->, dashed](5.5,0.4)(10:170:0.45);
		\draw (5.5,0.85) node[thick, red] {\LARGE $\times$};
		\draw (4.4,-0.7) arc (-80:80:0.7);
		\shade[ball color=teal](6,.25) circle (0.2);
		\shade[ball color=black](4.5,0.4) circle (0.2);
		\shade[ball color=black](4.75,0.1) circle (0.2);
		\shade[ball color=black](4.5,0) circle (0.2);
		\shade[ball color=black](4.7,-0.2) circle (0.2);
		\shade[ball color=black](4.5,-0.4) circle (0.2);
		
		\draw [->] (10,0)--(12.5,0);
		\foreach \x in {0,...,2} \draw (\x +10,-.1)node[below]{\x } -- (\x +10,0.1);
		\centerarc[thick,->](10.5,0.4)(10:170:0.45);
		\draw (10.5,0.9) node[anchor=south] {$(1-{\color{red}\alpha})\kappa N^{-\theta}$};
		
		\draw (9.4,-0.7) arc (-80:80:0.7);
		\shade[ball color=teal](11,.25) circle (0.2);
		\shade[ball color=teal](12,.25) circle (0.2);
		\shade[ball color=black](9.5,0.4) circle (0.2);
		\shade[ball color=black](9.75,0.1) circle (0.2);
		\shade[ball color=black](9.5,0) circle (0.2);
		\shade[ball color=black](9.7,-0.2) circle (0.2);
		\shade[ball color=black](9.5,-0.4) circle (0.2);
	\end{tikzpicture}
	
	\caption{Illustration (for the value $N=10$) of the bulk dynamics  (above) and of the boundary exchange dynamics with the left reservoir (below).  Allowed jumps are provided with their corresponding rates. Forbidden jumps (with rate $0$) are denoted by {\color{red}$\times$}.}
	\label{fig:dynamics}
\end{figure}
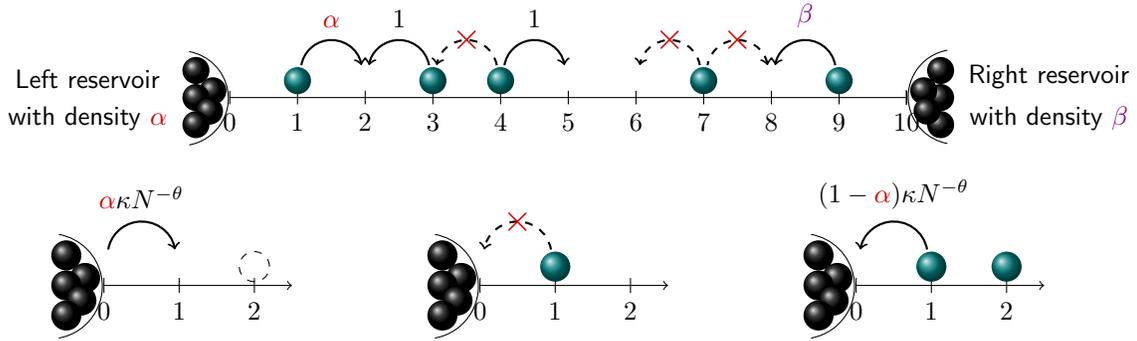

 Let us comment on the choices made to define the reservoir dynamics: the kinetic constraint (2) imposed at both reservoirs is not standard. It is made with the main purpose of preserving ergodic configurations: namely, if the particle system starts from an ergodic state  $\eta(0)$ (where every empty site is surrounded by particles), then it is not difficult to see that at any time $t>0$, $\eta(t)$ remains ergodic. With another definition of the reservoirs, the ergodic component would not remain stable under the dynamics, and this would  raise considerable difficulties from a hydrodynamic limit standpoint. Besides, the different rates at sites $1$ and $N-1$, as explained in the rule (3) above, are the most natural ones for a physical reason: this choice amounts to coupling a finite FEP with infinite reservoirs of particles which are themselves FEP systems. As a result, in the equilibrium case $\alpha=\beta$, we will see that the unique equilibrium measure in the open system is simply the marginal of an infinite grand-canonical measure.

\subsection{Active density and macroscopic limit}

As we mentioned earlier, the quantity $\mathfrak{a}(\rho)$ called \emph{active density} plays an important role. 
 This important relation between density and active density, namely $\mathfrak{a}(\rho)=(2\rho-1)/\rho$ can easily be inverted: we denote by ${\bar\rho} : [0,1]\longrightarrow [\frac12,1]$  the inverse function
\begin{equation}\label{eq:barrhointro}
	\bar\rho (a)=\act^{-1}(a)=\frac{1}{2-a}.
\end{equation}
We are now ready to state our main result. To focus on the most salient challenges of boundary interactions, we start our process straight from the ergodic component, in order to avoid some issues related to the transience time for the FEP. This implies that  the microscopic system is assumed to be initially already one-phased, with a uniformly-supercritical density. More precisely, we consider here the boundary-driven symmetric FEP in the diffusive time scale, started from an ergodic initial configuration fitting a supercritical density profile $\rho^{\rm ini}:[0,1]\to (\frac12,1]$. Our main result states that the empirical particle density converges as $N\to\infty$ to its hydrodynamic limit $\rho(t,u)$, which is the unique (weak) solution to 
\begin{equation}
	\label{eq:Diffeq}
	\partial_t \rho = \partial_u^2 \mathfrak{a}(\rho), \qquad \rho(0,\cdot) = \rho^{\rm ini}
\end{equation}
with different boundary conditions depending on the speed of the boundaries' exchanges: more precisely, \begin{itemize} \item  if $\theta<1$, the macroscopic density satisfies the Dirichlet boundary conditions
\[ \rho(\cdot,0) = \bar\rho(\alpha), \qquad \rho(\cdot,1)= \bar\rho(\beta) \; ; \]
\item  if $\theta=1$,  the macroscopic density satisfies the Robin boundary conditions,
\[\big(\partial_u \mathfrak{a}(\rho)\big)(\cdot,0) = \kappa \big( \act(\rho)(\cdot,0)-\alpha\big), \qquad \big(\partial_u \mathfrak{a}(\rho)\big)(\cdot,1) = \kappa \big( \beta - \act(\rho)(\cdot,1)\big) \; ;  \]
\item if $\theta >1$,  the macroscopic density satisfies the Neumann boundary conditions
\[\big(\partial_u \mathfrak{a}(\rho)\big)(\cdot,0) = 0, \qquad \big(\partial_u \mathfrak{a}(\rho)\big)(\cdot,1) = 0.  \]
\end{itemize}
As seen from the definition of $\bar\rho$ given in \eqref{eq:barrhointro}, the \emph{effective} boundary density imposed on the FEP by a reservoir with density $\alpha$ is equal to the density for which the \emph{active} density is equal to $\alpha$. In other words, the reservoirs' densities correspond to the boundary active densities, rather than the standard densities, as it is the case for the SSEP for instance. The corresponding result for the SSEP, with classical boundary conditions obtained in the same different regimes of $\theta$,  can be found in \cite{baldasso2017exclusion} and \cite{goncalves2019book}. The precise meaning of solution to the previous boundary-driven PDE will be given in Definitions \ref{defin:weaksolDirichlet} -- \ref{defin:weaksolNeumann}.

The general case where the initial profile takes any values in $[0,1]$ is technically challenging, partly because mappings are not available in the presence of reservoirs since the total number of particles is no longer conserved. We fully expect, however, that the hydrodynamic limit holds in that case as well and takes the form of a Stefan problem as in the periodic case. This is left for future work. Remarkably, a simple argument shows that \emph{unconstrained} reservoirs with density $\alpha$, \emph{i.e.~}the classical ones which remove particles at rate $1-\alpha$ without requiring another neighbouring particle, create the same unusual boundary conditions at the level of the macroscopic density. However, this framework raises significant technical difficulties, in particular very few information is available on the stationary states, thus it is also left for future work.  

\subsection{Strategy of the proof and outline} 

Let us now present briefly the strategy of the proof and its main novelties. As the hydrodynamic limit plays the role of a law of large numbers for the empirical density of particles, the detailed knowledge of the  stationary states is a crucial element in the proof. As expected, this is particularly challenging for the FEP (whose equilibrium states are not product \cite{blondel2020hydrodynamic}), and even more so in the non-equilibrium setting $\alpha\neq \beta$, which induces long-range correlations.  We first explicitly derive the equilibrium stationary state  in the presence of two reservoirs with  $\alpha=\beta$, which turns out to be the restriction to the finite system of the grand-canonical state $\pi_{\bar\rho(\alpha)}$ of the FEP with equilibrium density $\bar\rho(\alpha)$. Inspired by previous work \cite{erignoux2023fluctuations},  we represent it by a Markov construction.  Then, we construct an approximation of the stationary state in the non-equilibrium case $\alpha\neq \beta$, following the same Markov construction, and using the fact that the \emph{active} density in the bulk should interpolate linearly between its two boundary values.

The rest of the proof then follows Guo,  Papanicolaou and Varadhan's \emph{entropy method} \cite{guo1988nonlinear}, which relies on the classical \emph{one-block} and \emph{two-blocks} estimates, in order to replace microscopic observables by functions of the empirical measure. Since we are not in a periodic setup, and because the FEP's invariant states are not product (they charge  the ergodic component only),  some care is required. In particular, the approximated stationary states do not lend themselves easily to conditioning to local boxes. The adaptation of the entropy method to non-product, non-explicit distributions with strong local correlations is the  major contribution of our work.

\medskip

This article is organized as follows. In Section \ref{sec:modelres}, we introduce the FEP in contact with constrained reservoirs starting from the ergodic component, and state our main result, Theorem \ref{thm: hydrodynamic limit}, namely the hydrodynamic limit  for the boundary-driven FEP. In Section \ref{sec: stationary measures}, we study  its \emph{local} stationary states. Section \ref{sec:approx} is dedicated to building an approximate stationary state for the non-equilibrium FEP, inspired by the explicit results obtain in the previous section. Once the approximate stationary state is built, we obtain in the rest of Section \ref{sec:approx} the associated density field and dynamical Dirichlet estimates. In Section \ref{sec:HDL}, we finally exploit those Dirichlet estimates, in order to adapt the classical entropy method and complete the proof of the hydrodynamic limit. Since significant adaptations need to be made, we expose in detail the proof of the fundamental replacement lemmas, namely Lemma \ref{lemma: replacement bulk} and Lemma \ref{lemma:replacement_boundary}, in Section \ref{sec:replacement_boundary} (for the boundary replacement) and Section \ref{sec: Replacement lemma bulk} (for the bulk replacement). 


\subsection{General notations}

We gather here some general  notations that will be used throughout this article.

\begin{itemize}
	\item We use double brackets $\llbracket$ to denote integer segments, \emph{e.g.~}if $a,b\in\N$ are such that $a<b$,  $\llbracket a,b\rrbracket = \{ a,a+1,\dots ,b-1,b\}$.
	\item The integer $N\in\N$, $N\neq 0$ is a scaling parameter that shall go to infinity.
	\item Given two functions $f,g\in L^2\big( [0,1]\big)$,  we denote by
	\[\langle f,g\rangle = \int_0^1 f(u)g(u)\diff u\]
	their $L^2$ scalar product.  If $m$ is a finite measure on $[0,1]$ and $f\in L^2(m )$,  we also denote by $\langle m,f\rangle$ the integral of $f$ with respect to $m$. 
	\item For any non-negative sequence $(u_k)_{k\in\N}$ possibly depending on other parameters than $k$,  we will denote by $\mathcal{O}_k(u_k)$ (resp. $\mathcal{o}_k(u_k)$) an arbitrary sequence $(v_k)_{k\in\N}$ for which there exists a constant $C>0$ (resp. a vanishing sequence $(\varepsilon_k)_{k\in\N}$) -- possibly depending on other parameters -- such that
	\[\forall k\in\N ,\qquad |v_k|\le Cu_k \qquad (\mbox{resp.  }|v_k|\le\varepsilon_ku_k ).\]
	In the absence of ambiguity in the parameters, we simply write $\mathcal{O}(u_k)$ and $\mathcal{o}(u_k)$.
	\item A particle configuration is an element $\eta\in\{0,1\}^\Lambda$ for some $\Lambda \subset \Z$. Given a function $g(\eta)$, and given a time trajectory $\eta(t),\;t\geq 0$, whenever convenient we will simply write $g(t)$ for $g(\eta(t)).$
	\item If $\Lambda$ is a finite subset of $\Z$, we denote by $|\Lambda |$ its cardinality.
	\item When a new notation is introduced inside of a paragraph and is going to be used throughout, we colour it \nota{in blue}.
\end{itemize}

\section{Model and main results}
\label{sec:modelres}

\subsection{Definition of the model}

Let us introduce the \emph{boundary-driven facilitated exclusion process} which is investigated in this paper. This particle system is evolving on a finite one-dimensional lattice of size $N-1$, called its \emph{bulk} \nota{$\Lambda_N =\integers{1}{N-1}$}. The extreme sites $1$ and $N-1$ are called \emph{boundaries}. A \emph{particle configuration} is a variable $\eta = (\eta_x)_{x\in\Lambda_N}\in \nota{\Omega_N:=\lbrace 0,1\rbrace^{\Lambda_N}}$, where, as usual for exclusion processes, $\eta_x=1$ (resp.~$\eta_x=0$) means that site $x\in\Lambda_N$ is occupied by a particle (resp.~empty). 

We consider here the \emph{symmetric} Facilitated Exclusion Process (FEP), where particles jump at rate $1$ to each neighbouring site provided the target site is empty (this is the \emph{exclusion rule}) and that its other neighbouring site is occupied (this is the \emph{kinetic constraint}).  In the bulk, the dynamics is ruled by the Markov generator $\mathcal{L}_0$ which is defined as follows: for any $f:\Omega_N\longrightarrow\R$, and any $\eta\in\Omega_N$,
\begin{equation}
\label{def:genbulk}
	\mathcal{L}_0f(\eta ) = \sum_{x=1}^{N-2}c_{x,x+1}(\eta )\big[ f(\eta^{x,x+1})-f(\eta )\big] ,
\end{equation}
where
\begin{equation}
\label{eq:bulkjumprate}
	c_{x,x+1}(\eta ):=\eta_{x-1}\eta_x(1-\eta_{x+1})+(1-\eta_x)\eta_{x+1}\eta_{x+2},
\end{equation}
is the jump rate encompassing both constraints, and $\eta^{x,y}$ is the configuration where the values at sites $x$ and $y$ have been exchanged, namely
\begin{equation}
\label{eq:Defetaxy}
\forall z\in\Lambda_N,\quad \eta^{x,y}_z =\begin{cases}
	\eta_z & \mbox{ if }z\neq x,y, \\ 
	\eta_y & \mbox{ if }z=x, \\ 
	\eta_x & \mbox{ if }z=y.
	\end{cases}
\end{equation}
Let \nota{$\alpha ,\beta\in (0,1)$} be two parameters which will encode the respective densities of two stochastic particle reservoirs in contact with the boundaries.  In \eqref{eq:bulkjumprate}, for $x=1$, $x=N-1$ we define by convention 
\begin{equation}
\label{def:conventions1}
	\eta_0 \equiv \alpha\qquad \mbox{ and }\qquad \eta_N\equiv \beta.
\end{equation}
In particular, this convention implies that a particle at one of the boundaries (\textit{i.e.}~$1$ or $N-1$) is always able to jump to the neighbouring site if the latter is empty (thanks to the presence of the stochastic reservoir), and the rate of this jump is equal to the corresponding \emph{reservoir density}.

Besides, the stochastic reservoirs are able to exchange particles with the bulk. Let us first introduce  two additional parameters \nota{$\theta\in\R$} and \nota{$\kappa >0$} which will rule the speed of those exchanges. Then, particles can be either created or absorbed by the reservoirs, as follows:
\begin{itemize}
	\item if site $x=1$ (resp.~$x=N-1$) is empty, then the left (resp.~right) reservoir injects a particle at rate $\kappa \alpha N^{-\theta}$ (resp.~$\kappa\beta N^{-\theta}$) at this site ;
	\item if there is a particle at site $x=1$ (resp.~$x=N-1$), then the reservoir absorbs it at rate $\kappa (1-\alpha )N^{-\theta}$ (resp.~ $\kappa (1-\beta )N^{-\theta}$) \emph{only if} site $x=2$ (resp.~$x=N-2$) is also occupied. This additional kinetic constraint in case of absorption is consistent with the bulk kinetic constraint: in order to leave the system, a particle also needs an occupied neighbour.
\end{itemize}
In other words, the boundary dynamics is ruled by the generator
\[ \frac{\kappa}{N^\theta} (\mathcal{L}_\ell+\mathcal{L}_r),\]
where, for any $f:\Omega_N\longrightarrow\R$ and any $\eta\in\Omega_N$,
\begin{equation}
\label{eq:boundarygen}
	\mathcal{L}_\ell f(\eta ) = b_\ell (\eta ) \big[ f(\eta^1)-f(\eta )\big] \qquad \mbox{ and }\qquad \mathcal{L}_rf(\eta ) =  b_r(\eta )\big[ f(\eta^{N-1})-f(\eta )\big]
\end{equation}
with boundary rates given by
\begin{equation}
\label{def:boundaryrates}
	b_\ell (\eta ) = \alpha ( 1-\eta_1) +(1-\alpha )\eta_1\eta_{2} \qquad \mbox{ and }\qquad b_r(\eta ) = \beta ( 1-\eta_{N-1}) +(1-\beta )\eta_{N-1}\eta_{N-2}
\end{equation}
and where $\eta^x$ is the configuration obtained from $\eta$ by flipping the coordinate $x$:
\begin{equation}
	\eta^x_z=\begin{cases}\eta_z & \mbox{ if }z\neq x, \\ 
		1-\eta_x & \mbox{ if }z=x.
		\end{cases}
\end{equation}
Finally, the \emph{boundary-driven symmetric FEP} considered in this paper is  ruled by the total generator
\begin{equation}\label{def:L_N}
	\mathcal{L}_N:=\mathcal{L}_0+\frac{\kappa}{N^\theta}(\mathcal{L}_\ell +\mathcal{L}_r).
\end{equation}

\bigskip

As already pointed out in \cite{blondel2020hydrodynamic}, the FEP belongs to the class of \emph{gradient models} because the instantaneous current of particles in the bulk, namely 
\begin{equation}\label{def:current}
	j_{x,x+1}(\eta )=c_{x,x+1}(\eta )(\eta_x-\eta_{x+1}), \qquad  x\in\integers{1}{N-2}
\end{equation}
 can be written under the form
\begin{equation}\label{eq:gradientcondition}
	j_{x,x+1}(\eta )=h_x(\eta )-h_{x+1}(\eta )
\end{equation}
where $h_x$ is the following \emph{local}\footnote{\textit{i.e.}~which depends on a finite number of coordinates.} function  defined for $x\in\integers{1}{N-1}$ by
\begin{equation}\label{def:hx}
	h_x(\eta )=\eta_{x-1}\eta_x+\eta_x\eta_{x+1} - \eta_{x-1}\eta_x\eta_{x+1},
\end{equation}
with the convention $\eta_0=\alpha, \eta_N=\beta$. Note that here, the gradient decomposition is valid for any $x\in\integers{1}{N-2}$. At the boundaries, we have
\begin{equation}\label{def:currentboundaries}
	j_{0,1}(\eta )=\frac{\kappa}{N^\theta} b_\ell (\eta )(1-2\eta_1)\qquad\mbox{ and }\qquad j_{N-1,N}(\eta )=\frac{\kappa}{N^\theta} b_r(\eta)(2\eta_{N-1}-1)
\end{equation}
and therefore we can write a similar decomposition as in \eqref{eq:gradientcondition}, namely
\begin{equation}
	\label{eq:gradientboundaries}
	j_{0,1}(\eta) = \frac{\kappa}{N^\theta} (h_0(\eta)-h_1(\eta)), \qquad j_{N-1,N}(\eta) = \frac{\kappa}{N^\theta} (h_{N-1}(\eta)-h_N(\eta))
\end{equation}
if we further define by convention
\begin{equation}
	\label{def:conventions2}
	h_0\equiv \alpha\qquad\mbox{ and }\qquad h_N\equiv\beta.
\end{equation}

\begin{definition}[Active particle]\label{de:active}\upshape
	A particle at site $x\in\Lambda_N$ is said to be \emph{active} if it has at least one occupied neighbour, or it is situated at one of the boundaries ($x=1$ or $x=N-1$). In particular, if $x\neq 1$ and $x \neq N-1$, the function $h_x$ can be interpreted as the indicator function that an active particle lies at site $x$.
\end{definition}



\subsection{Phase transition and grand-canonical equilibrium measures}

\subsubsection{Frozen and ergodic configurations}

\label{subsec:transience}

In the absence of boundary interactions, the symmetric FEP is now quite well understood, see \cite{blondel2020hydrodynamic,blondel2021stefan} for a detailed study in the periodic case. In particular, this one-dimensional model exhibits a phase separated behaviour, which depends on the local particle density. 

More precisely, let us define its critical density $\nota{\rho_c:=\frac12}$. If the process starts from an initial state with subcritical total density $\rho<\rho_c$, then, after a \emph{transience time}, almost surely every particle becomes isolated (surrounded by empty sites), \textit{i.e.}~the FEP reaches its \emph{frozen component}, where configurations contain no active particles. If instead, the process is started from a state with  supercritical density $\rho >\rho_c$, then, after a transience time, it  reaches its \emph{ergodic component}, made of configurations in which all empty sites are isolated (surrounded by occupied ones).  The time to reach the ergodic component, starting from a product state, has been estimated in \cite{blondel2020hydrodynamic}, while the general case has been recently studied in detail in \cite{EM24}, starting from the worst possible configuration.
The definition of the ergodic component stems from the fact that pairs of neighbouring empty sites can be separated by the dynamics, but not created, which makes the ergodic component  irreducible for the Markov process. We give  in Figure \ref{fig: active particles} an example (in our boundary-driven setting) of an ergodic configuration in $\Omega_{13}$ where we highlight its active particles (recall Definition \ref{de:active}). 

 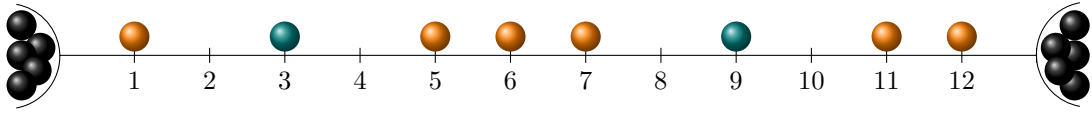
\begin{figure}
	\centering
	\begin{tikzpicture}
		\node[color=white] at (0,1.5) {nothing};
		\draw [-|] (0,0)--(13,0);
		\foreach \x in {1,...,12} \draw (\x,-.1)node[below]{\x} -- (\x,0.1);
		\shade[ball color=orange](1,.25) circle (0.2);
		\shade[ball color=teal](3,.25) circle (0.2);
		\shade[ball color=orange](5,.25) circle (0.2);
		\shade[ball color=orange](6,.25) circle (0.2);
		\shade[ball color=orange](7,.25) circle (0.2);
		\shade[ball color=teal](9,.25) circle (0.2);
		\shade[ball color=orange](11,.25) circle (0.2);
		\shade[ball color=orange](12,.25) circle (0.2);
		\draw (-0.57,-0.7) arc (-80:80:0.7);
		\shade[ball color=black](-0.5,0.4) circle (0.2);
		\shade[ball color=black](-0.25,0.1) circle (0.2);
		\shade[ball color=black](-0.5,0) circle (0.2);
		\shade[ball color=black](-0.3,-0.2) circle (0.2);
		\shade[ball color=black](-0.5,-0.4) circle (0.2);
		\draw (13.57,0.7) arc (100:260:0.7);
		\shade[ball color=black](13.5,0.4) circle (0.2);
		\shade[ball color=black](13.5,0) circle (0.2);
		\shade[ball color=black](13.5,-0.4) circle (0.2);
		\shade[ball color=black](13.25,0.1) circle (0.2);
		\shade[ball color=black](13.3,-0.2) circle (0.2);
	\end{tikzpicture}
	\caption{An ergodic configuration and its active particles (in orange).}
	\label{fig: active particles}
\end{figure}
\medskip
Throughout, given a set $B\subset \Z$, we define the ergodic component  on $B$, denoted by \nota{$\mathcal{E}_B$} as the set of configurations on $B$ where two neighbouring sites in $B$ contain at least one particle, namely
\eq{eq:ergB}{\mathcal{E}_B:=\Big\{ \eta\in \{0,1\}^B : \eta_x+\eta_{x+1}\ge 1\mbox{ for any }\{x,x+1\}\subset B\Big\}.}
The presence of reservoirs which can create and destroy particles on both sides prevents the system from evolving towards frozen configurations,  since the reservoirs are always able to create active particles at the boundaries, even in a frozen configuration.  In particular, the boundary-driven FEP almost surely ultimately reaches the ergodic component
\begin{equation}\label{def: E_N}
\mathcal{E}_N :=\mathcal{E}_{\Lambda_N}=\Big\{ \eta\in \Omega_N\; : \; \; \eta_x+\eta_{x+1}\ge 1, \;\;\; \forall x\in \integers{1}{N-2} \Big\},
\end{equation} 
however in this case where boundaries dynamics are involved, no sharp estimate on the transience time is available so far.
In fact, this is why the additional constraint of the reservoirs which can absorb particles only if the neighbouring site is occupied is very important: this ensures that the ergodic component remains stable under the dynamics. More precisely, we prove in Appendix \ref{sec:appergodic} that the FEP is irreducible on $\mathcal{E}_N$, meaning that two ergodic configurations can be linked by a series of particle jumps/creations/annihilations. As a consequence,  the generator $\gene_N$ has a unique stationary measure $\bar\mu^N$ which is concentrated on the ergodic component $\mathcal{E}_N$. We will see in Section \ref{sec: stationary measures} a more precise local description of this stationary state.

\subsubsection{Grand-canonical measures on $\Z$ and active density field}
\label{sec:grancan}
Let us now recall the \emph{grand-canonical} measures for the facilitated exclusion process in the supercritical phase,  that have been studied in details in \cite[Section 6.2]{blondel2020hydrodynamic}. There exists a collection of supercritical reversible probability distributions $(\pi_\rho )_{\frac{1}{2}<\rho \le 1}$ for the FEP on the infinite line $\Z$, driven by the generator
\begin{equation}
	\label{def:geninf}
	\mathcal{L}_\infty f(\eta ) = \sum_{x\in \Z} c_{x,x+1}(\eta ) \big[ f(\eta^{x,x+1})-f(\eta )\big] ,
\end{equation}
with the same jump rates given by \eqref{eq:bulkjumprate}.
Those measures are translation invariant, and have support on the (infinite volume) ergodic component ${\mc E}_\Z$ (see \eqref{eq:ergB}). Let us fix, for $\ell\geq 1$, a box $B_\ell :=\llbracket 1,\ell \rrbracket $. Then, given a local configuration $\sigma\in\lbrace 0,1\rbrace^{B_\ell}$, the grand-canonical states for the FEP are defined by their local marginals
\begin{equation}
	\label{def:pi_rho}
	\pi_\rho\big( \eta_{\mid B_\ell}=\sigma\big) = (1-\rho ) (1-\act(\rho))^{\ell -1-p}\act(\rho)^{2p-\ell+1-\sigma_1-\sigma_\ell}\ind\{\sigma\in\mathcal{E}_{B_\ell}\},
\end{equation}
where $p=\nota{|\sigma|:=\sum_{x=1}^{\ell}\sigma_x}$ is $\sigma$'s number of particles in $B_\ell$, and \nota{$\act (\rho)$} is the density of active particles (or \emph{active density}), defined as follows: it is the increasing function $\act : [\frac12,1]\longrightarrow [0,1]$  given  by
	\begin{equation}\label{eq:act}
		\act(\rho)=\frac{2\rho-1}{\rho}.
	\end{equation}
The name \emph{active density} will become more clear in the next paragraph.

In practice, this formula is not very convenient in applications, because it describes the distribution $\pi_\rho$ ``globally'' in a fixed  box, rather than sequentially. For this reason, we give the following interpretation of $\pi_\rho$: we set $\eta_0\sim \mathrm{Ber}(\rho)$, and we define two Markov chains started from $\eta_0$, with the same transition probabilities, but the first one, denoted by $(\eta_{x})_{x\geq 0}$, goes \emph{forward from the origin}, while the second one denoted by $(\eta_{-x})_{x\geq 0}$, goes \emph{backward from the origin} (and, once $\eta_0$ is chosen, they evolve independently of each other). More precisely we have, for any $x \geq 0$, 
\begin{equation}\label{transition grand-canonical}
	\pi_\rho \big(\eta_{x+1}=1\big|\eta_x=1\big) = \act(\rho)\qquad\mbox{ and }\qquad \pi_\rho\big(\eta_{x+1}=1\big|\eta_x=0\big) =1,
\end{equation}
and similarly for the backward chain. As expected, this Markovian construction, starting from an arbitrary site, only charges the ergodic component since as soon as a site is empty,  the next one is occupied with probability $1$. It is then straightforward to check that the resulting chain $(\eta_x)_{x\in \Z}$ has local marginals given by \eqref{def:pi_rho}, so that its distribution is indeed $\pi_\rho$. 

Finally, note that the function  $\act(\rho)$ defined in \eqref{eq:act} is indeed the \emph{active density} under $\pi_\rho$, since one can easily check that: for any $x \in \Z$, 
\begin{equation}\label{eq:relationdensities_pirho}
	\pi_\rho \big( h_x(\eta )=1\big) = \act(\rho)
\end{equation}
where $h_x$ was defined in \eqref{def:hx} as the indicator function that $x$ is occupied by an active particle.

Finally, for future reference we denote the inverse function  $\nota{\bar\rho} : [0,1]\longrightarrow [\frac12,1]$  given by
\begin{equation}
	\label{def:rhobar}
	\bar\rho (a)=\act^{-1}(a)=\frac{1}{2-a}.
\end{equation}

\subsection{Main result}
In this section we present the main results of this paper. First, let \nota{$T>0$} be an arbitrary fixed time horizon. Given a probability measure $\mu$ on $\Omega_N$, we denote by \nota{$\mathbb{P}_\mu$} the distribution on the Skorokhod space $\mathcal{D}([0,T],\Omega_N)$ of the process $(\eta (t))_{t\ge 0}$ driven by the \emph{diffusively accelerated} generator $N^2\mathcal{L}_N$, and with initial distribution $\mu$. We denote by \nota{$\E_\mu$} the corresponding expectation. Note that, even though the process $\eta (t)$ strongly depends on $N$, \textit{via} the timescale and the state space, this dependence does not appear in our notation, for the sake of clarity. 

The main result of this paper consists in proving that the empirical density associated with the configuration of particles converges in the diffusive timescale, as $N\to+\infty$, towards a density profile which is solution to some hydrodynamic equation with suitable boundary conditions, as we now explain.

\subsubsection{Initial distribution}
\label{sec:nu0N}

Although we strongly conjecture that our main result holds in a fairly general setting, in order to focus on the main technical challenges we consider in this article the case of a FEP starting from a \emph{supercritical} ergodic configuration $\eta$, and 
we now explain how to construct our initial distribution. We consider in this paper an initial probability law $\mu\equiv \nu_0^N$ whose support is included in  the ergodic component $\mathcal{E}_N$, and which also fits a given initial density profile $\rho^\mathrm{ini}:[0,1]\to(\frac12,1]$ assumed to be supercritical and continuous.

 In order to construct $\nu_0^N$, we first define the discrete \emph{active density field} associated with $\rho^{\rm ini}$, as follows:
\begin{equation}
	\label{eq:axini}
	a_x^\mathrm{ini} := \frac{\rho^\mathrm{ini}\big( \frac{x}{N}\big) +\rho^\mathrm{ini}\big( \frac{x-1}{N}\big) -1}{\rho^\mathrm{ini}(\frac{x-1}{N}\big)}, \qquad x\in \llbracket2,N-1\rrbracket.
\end{equation}
Note that, as its name suggests, $a_x^\mathrm{ini}$ is close to $\act (\rho^\mathrm{ini}(\frac xN ))$ as $N\to\infty$, with $\act(\cdot)$ defined in \eqref{eq:act}.  

\begin{definition} \label{de:initial} \upshape
	The \emph{initial condition} $\nota{\nu_0^N}$ is the probability distribution on $\Omega_N$ given by the law of an inhomogeneous Markov chain $(\eta_x)_{x\in\Lambda_N}$ with state-space $\{0,1\}$, started from $\eta_1\sim \mathrm{Ber}(\rho^\mathrm{ini}(\frac1N))$, and with transition probabilities 
\begin{equation} \label{eq:transition}
	\nu_0^N\big( \eta_{x+1}=1\big|\eta_x=1\big) =a_{x+1}^\mathrm{ini} \quad\mbox{ and }\quad \nu_0^N\big( \eta_{x+1}=1\big|\eta_x=0\big) =1,
\end{equation}
for any $x \in \llbracket 1, N-2 \rrbracket$. 
\end{definition}

Note that under the transition probabilities \eqref{eq:transition}, an empty site is followed by a particle with probability $1$, so that the support of $\nu_0^N$ is included in the ergodic component $\mathcal{E}_N$, as we wanted. Furthermore, by the Markov property and induction it is immediate to check that for any $x \in \Lambda_N$, 
\begin{equation}
	\nu_0^N(\eta_x=1)=\rho^\mathrm{ini}\Big(\frac{x}{N}\Big).
\end{equation}
We will prove in Appendix \ref{Appendix:CorrDecay} that under $\nu_0^N$, spatial correlations decay exponentially (cf.~\eqref{eq:corrdecay}), therefore by the law of large numbers, $\nu_0^N $ fits the macroscopic profile $\rho^{\rm ini}$, in the sense that for any smooth function $G$ on $[0,1]$
\begin{equation}
	\label{eq:initialprofile}
	\frac{1}{N}\sum_{x=1}^{N-1} G\Big(\frac{x}{N}\Big)\eta_x\xrightarrow[N\to +\infty]{}\int_0^1 G(u)\rho^{\rm ini}(u) \diff u
\end{equation}
in $\nu_0^N$--probability.

\subsubsection{Hydrodynamic equations}

Before stating our main result, we introduce some notations and definitions, starting with the spaces of functions that we will use:
\begin{itemize}
	\item $\mathcal{C}^{k,\ell}\big( [0,T]\times [0,1]\big)$ is the space of functions \[G:(t,u)\in [0,T]\times [0,1]\longmapsto G(t,u)= \nota{G_t(u)}\in\R\] which are of class $\mathcal{C}^k$ with respect to the time variable, and of class $\mathcal{C}^\ell$ with respect to the space variable.
	\item $\mathcal{C}_c^{k,\ell}\big( [0,T]\times (0,1)\big)$ is the space of functions $G\in \mathcal{C}^{k,\ell}\big( [0,T]\times [0,1]\big )$ such that, for any time $t\in [0,T]$, the function $G_t:[0,1]\longrightarrow\R$ has compact support included in $(0,1)$.
	\item $\mathcal{H}^1$ is the Sobolev space of locally integrable functions $g:[0,1]\longrightarrow\R$ such that there exists a function $\partial_ug\in L^2\big( [0,1]\big)$ for which $\langle \partial_ug,\varphi\rangle = -\langle g,\partial_u\varphi\rangle$ for all $\varphi\in\mathcal{C}_c^\infty \big( (0,1)\big)$. We endow it with the norm
	\begin{equation*}
		\| g \|_{\mathcal{H}^1} := \Big( \| g\|_{L^2}^2 + \|\partial_ug\|_{L^2}^2\Big)^{1/2}.
	\end{equation*}
	We also let $\mathcal{H}_0^1$ be the closure of $\mathcal{C}_c^\infty\big( (0,1)\big)$ with respect to the topology of this norm.
	\item $L^2\big( [0,T],\mathcal{H}^1\big)$ is the space of measurable functions $G:[0,T]\longrightarrow\mathcal{H}^1$ such that
	\begin{equation*}
		\| G\|_{L^2([0,T],\mathcal{H}^1)}^2 := \int_0^T \| G_t\|_{\mathcal{H}^1}^2\diff t <+\infty .
	\end{equation*}
	\item The usual inner product of $L^2\big( [0,T]\times [0,1]\big)$ is denoted by
	\begin{equation*}
		\dlangle G,H\drangle := \int_0^T\langle G_s,H_s\rangle\diff s, \qquad \forall G,H\in L^2\big( [0,T]\times [0,1]\big).
	\end{equation*}
\end{itemize}

We are now ready to define the notions of solutions to the hydrodynamic equations that will be derived for the boundary-driven FEP. Recall that we have fixed  a supercritical continuous initial profile $\rho^\mathrm{ini} : [0,1] \longrightarrow \big(\frac12,1\big]$, and recall the definitions \eqref{eq:act} and \eqref{def:rhobar} for $\act(\cdot)$ and its inverse.

\begin{definition}[Weak solution with Dirichlet boundary conditions]\label{defin:weaksolDirichlet}\upshape
	Let $\rho_-,\rho_+\in (\frac12,1]$ be two \emph{supercritical} boundary densities. We say that a measurable function $\rho : [0,T]\times [0,1]\longrightarrow [0,1]$ is a \emph{weak solution} to the following \emph{fast diffusion equation with Dirichlet boundary conditions}, and initial condition $\rho^\mathrm{ini}$
	\begin{equation}
		\label{eq:fast_diffusion_equation_Dirichlet}
		\begin{cases}
		\partial_t\rho = \partial_u^2\act(\rho )  & \mbox{ on } [0,T]\times [0,1], \\ 
		\rho_0(\cdot )=\rho^\mathrm{ini}(\cdot ), &  \\ 
		\rho_t (0)=\rho_-,  \;\; \rho_t (1)=\rho_+  & \mbox{ for all }t\in [0,T],
		\end{cases}
	\end{equation}
	if the following three conditions are satisfied:
	\begin{itemize}
		\item[(i)] $\mathfrak{a}(\rho)\in L^2\big( [0,T], \mathcal{H}^1\big)$ ;
		\item[(ii)] for any $t\in [0,T]$, and any test function $G\in\mathcal{C}_c^{1,2}\big([0,T]\times (0,1)\big)$, we have
		\begin{equation}
		\label{eq:weakformulation_Dirichlet}
			\langle \rho_t,G_t\rangle - \langle\rho^\mathrm{ini},G_0\rangle - \int_0^t\langle \rho_s,\partial_tG_s\rangle\diff s = \int_0^t \big\langle\act (\rho_s),\partial_u^2G_s\rangle\diff s \; ; 
		\end{equation}
		\item[(iii)] for all $t\in [0,T]$, $\rho_t(0)=\rho_-$ and $\rho_t(1)=\rho_+$.
	\end{itemize}
\end{definition}

\begin{definition}[Weak solution with Robin boundary conditions]\label{defin:weaksolRobin}\upshape
	We say that a measurable function $\rho : [0,T]\times [0,1]\longrightarrow [0,1]$ is a \emph{weak solution} to the following \emph{fast diffusion equation with Robin boundary conditions} and initial condition $\rho^\mathrm{ini}$
	\begin{equation}
		\label{eq:fast_diffusion_equation_Robin}
		\begin{cases}
		\partial_t\rho = \partial_u^2\act(\rho )  & \mbox{ on } [0,T]\times [0,1], \\ 
		\rho_0(\cdot )=\rho^\mathrm{ini}(\cdot ), &  \\ 
		\partial_u\act (\rho_t)(0)=\kappa\big( \act (\rho_t(0))-\alpha\big) & \mbox{ for all }t\in [0,T],\\
		\partial_u\act (\rho_t)(1)=\kappa\big(\beta -\act (\rho_t(1))\big) & \mbox{ for all }t\in [0,T],
		\end{cases}
	\end{equation}
	if the following two conditions hold :
	\begin{itemize}
		\item[(i)] $\mathfrak{a}(\rho)\in L^2\big( [0,T], \mathcal{H}^1\big)$ ;
		\item[(ii)] for any $t\in [0,T]$ and any test function $G\in\mathcal{C}^{1,2}\big([0,T]\times [0,1]\big)$, we have
		\begin{multline}
		\label{eq:weakformulation_Robin}
			\langle\rho_t,G_t\rangle -\langle\rho^\mathrm{ini},G_0\rangle - \int_0^t \langle\rho_s,\partial_tG_s\rangle\diff s \\= \int_0^t\big\langle \act(\rho_s),\partial_u^2G_s\big\rangle\diff s  -\int_0^t\Big\lbrace \act(\rho_s(1))\partial_uG_s(1)-\act(\rho_s(0))\partial_uG_s(0)\Big\rbrace\diff s \\+ \kappa\int_0^t\Big\lbrace \big( \beta - \act(\rho_s(1))\big) G_s(1)-\big(\act (\rho_s(0))-\alpha\big)G_s(0)\Big\rbrace\diff s.
		\end{multline}
	\end{itemize}
\end{definition}

\begin{definition}[Weak solution with Neumann boundary conditions]\label{defin:weaksolNeumann}\upshape
	We say that a measurable function $\rho : [0,T]\times [0,1]\longrightarrow [0,1]$ is a \emph{weak solution} to the following \emph{fast diffusion equation with Neumann boundary conditions} and initial condition $\rho^\mathrm{ini}$
	\begin{equation}
		\label{eq:fast_diffusion_equation_Neumann}
		\begin{cases}
		\partial_t\rho = \partial_u^2\act(\rho )  & \mbox{ on } [0,T]\times [0,1], \\ 
		\rho_0(\cdot )=\rho^\mathrm{ini}(\cdot ), &  \\ 
		\partial_u\act (\rho_t)(0)=\partial_u\act(\rho_t)(1)=0 & \mbox{ for all }t\in [0,T],
		\end{cases}
	\end{equation}
	if $\mathfrak{a}(\rho)\in L^2\big( [0,T], \mathcal{H}^1\big)$ and equation \eqref{eq:weakformulation_Robin} with $\kappa =0$ is satisfied for any $t\in [0,T]$ and any test function $G\in\mathcal{C}^{1,2}\big([0,T]\times [0,1]\big)$.
\end{definition}

\begin{remark}\upshape
	All the partial differential equations above admit a unique weak solution for their respective notion of solutions. We prove it in Appendix \ref{sec:appuniqueness}.
\end{remark}

\subsubsection{Hydrodynamic limits}
\label{sec:HDLstatement}

We are now ready to state our main result.

\begin{theorem}[Hydrodynamic limit for the boundary-driven FEP]
	\label{thm: hydrodynamic limit}
	Let $\rho^{\rm ini}:[0,1]\to(\frac12,1]$ be a continuous initial profile and recall the initial distribution $\nu_0^N$ defined in Section \ref{sec:nu0N}, associated with $\rho^{\rm ini}$. 
	
	Then, for all continuous function $G:[0,1]\longrightarrow\R$,  all $\delta >0$ and all $t \in [0,T]$ we have
	\begin{equation}\label{lim Hydrodynamic limit}
	\lim_{N\to +\infty}\Prob_{\nu_0^N} \left( \bigg|\frac{1}{N}\sum_{x\in\Lambda_N}G\Big( \frac{x}{N}\Big)\eta_x(t)-\int_0^1G(u)\rho_t(u)\diff u\bigg|>\delta\right) =0
	\end{equation}
	where $\rho :(t,u) \in [0,T] \times [0,1] \longmapsto \rho_t(u)\in [0,1]$ is the unique weak solution of
	\begin{itemize}
		\item if $\theta <1$, the fast diffusion equation \eqref{eq:fast_diffusion_equation_Dirichlet} with Dirichlet boundary conditions
		\begin{equation}\label{eq:DirichletBC}
			\rho_- =\bar\rho (\alpha )=\frac{1}{2-\alpha} \qquad\mbox{ and }\qquad \rho_+=\bar\rho (\beta ) =\frac{1}{2-\beta}
		\end{equation}
		in the sense of Definition \ref{defin:weaksolDirichlet} ;
		\item if $\theta =1$, the fast diffusion equation with Robin boundary conditions \eqref{eq:fast_diffusion_equation_Robin} in the sense of Definition \ref{defin:weaksolRobin} ;
		\item if $\theta >1$, the fast diffusion equation with Neumann boundary conditions \eqref{eq:fast_diffusion_equation_Neumann} in the sense of Definition \ref{defin:weaksolNeumann}.
	\end{itemize}
\end{theorem}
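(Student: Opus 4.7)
The plan is to follow Guo--Papanicolaou--Varadhan's entropy method, with the substantial twist that since the FEP's invariant measures are non-product, the standard product reference state must be replaced by the approximate stationary measure $\bar\mu^N$ constructed in Section \ref{sec:approx} via the Markov chain representation inspired by \eqref{transition grand-canonical}. First, I would prove tightness of the sequence of empirical measures $\pi^N_t := \frac1N \sum_{x\in\Lambda_N} \eta_x(t)\delta_{x/N}$ in the Skorokhod space $\mathcal{D}([0,T], \mathcal{M}_+([0,1]))$. For each smooth test function $G$ the Dynkin decomposition yields
\[
M^{N,G}_t = \langle \pi^N_t, G\rangle - \langle \pi^N_0, G\rangle - \int_0^t N^2 \mathcal{L}_N \langle \pi^N_s, G\rangle \diff s,
\]
a martingale whose quadratic variation is $\mathcal{O}(1/N)$; the drift term is rewritten using the gradient structure \eqref{eq:gradientcondition}--\eqref{eq:gradientboundaries}, and two discrete summations by parts turn it into $\int_0^t \frac1N\sum_x h_x(\eta_s) \partial_u^2 G(\frac{x}{N}) \diff s$ plus boundary contributions of order $N^{1-\theta}$. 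Aldous's criterion then gives tightness.

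Once tightness is established, the goal is to identify any limit point as a weak solution of the appropriate PDE. This is where the replacement lemmas \ref{lemma: replacement bulk} and \ref{lemma:replacement_boundary} enter: they allow to replace $h_x(\eta)$, after local averaging on a box of size $\varepsilon N$, by $\act(\rho^{\varepsilon N}_x)$, where $\rho^\ell_x$ denotes the empirical density around site $x$. The key relation \eqref{eq:relationdensities_pirho}, $\pi_\rho(h_x=1) = \act(\rho)$, is precisely what identifies the macroscopic diffusion as $\partial_u^2 \act(\rho)$. The essential input is an entropy/Dirichlet form bound for $\mu^N_t$ with respect to $\bar\mu^N$, and the bulk replacement will rely on a spectral gap estimate for the FEP restricted to the ergodic component, together with the exponential decay of correlations under $\bar\mu^N$ proved in Appendix \ref{Appendix:CorrDecay} to handle the non-product nature of the reference measure. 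A Chang--Landim--Olla-style energy estimate then ensures $\act(\rho) \in L^2([0,T], \mathcal{H}^1)$, so that the boundary traces make sense and the uniqueness result from Appendix \ref{sec:appuniqueness} applies.

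The three regimes of $\theta$ are separated by a careful asymptotic analysis of the boundary contribution $N^{1-\theta} \kappa \int_0^t (b_\ell G(\tfrac1N) + b_r G(\tfrac{N-1}{N}))\diff s$. For $\theta < 1$ the prefactor $N^{1-\theta}$ blows up, so one tests against $G \in \mathcal{C}^{1,2}_c([0,T]\times(0,1))$ and obtains the bulk weak formulation \eqref{eq:weakformulation_Dirichlet}; the Dirichlet boundary values $\rho_t(0)=\bar\rho(\alpha)$, $\rho_t(1)=\bar\rho(\beta)$ are then enforced by exploiting the entropy production at the boundary, which forces $h_1 \simeq \alpha$ and $h_{N-1} \simeq \beta$ near equilibrium with the reservoir, combined with the boundary replacement lemma \ref{lemma:replacement_boundary} that identifies these microscopic quantities with $\act(\rho_t(0))$ and $\act(\rho_t(1))$ respectively. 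For $\theta = 1$ the boundary prefactor is of order $1$ and, after integration by parts on general test functions $G \in \mathcal{C}^{1,2}([0,T]\times[0,1])$, produces exactly both boundary terms of \eqref{eq:weakformulation_Robin}. For $\theta > 1$ the boundary contribution vanishes in the limit and only the bulk integration by parts survives, giving Neumann conditions.

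The main obstacle is unquestionably the proof of the replacement lemmas in this non-product setting. The classical argument proceeds by local equivalence of ensembles under a product grand-canonical measure, which is unavailable here: canonical restrictions of $\bar\mu^N$ to local boxes inherit the Markovian (non-product) structure and depend on the boundary values of the box, so the usual combinatorial entropy bound is not directly applicable. The strategy is to exploit the explicit sequential Markov construction of $\bar\mu^N$ to compute conditional distributions on mesoscopic boxes, relate the empirical active density to the empirical density through the $\act$--$\rho$ correspondence, and use the exponential decay of correlations to control the error terms. The boundary replacement when $\theta < 1$ is particularly delicate, since the boundary layer must be analyzed on a scale intermediate between the reservoir rate $N^{-\theta}$ and the diffusive one, and the entropy dissipation must be strong enough to fix the boundary trace of $\act(\rho_t)$ at $\alpha$ and $\beta$. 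This is precisely what Sections \ref{sec:replacement_boundary} and \ref{sec: Replacement lemma bulk} of the paper are devoted to.
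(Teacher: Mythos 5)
Your outline reproduces the paper's strategy faithfully: tightness via Dynkin's decomposition and Aldous's criterion, identification of limit points through the gradient structure and the one-/two-block style replacement lemmas, the energy estimate to give $\act(\rho)$ an $L^2([0,T],\mathcal{H}^1)$ trace, and the uniqueness result, with the boundary term $\kappa N^{1-\theta}(\dots)$ driving the trichotomy Dirichlet/Robin/Neumann. Two small inaccuracies: the explicit Markov-chain reference measure the paper builds in Section \ref{sec:approx} is denoted $\mu^N$, not $\bar\mu^N$ (the latter is the true, inexplicit stationary state, about which essentially nothing beyond the linear active-density field \eqref{eq:barax} is known), so the decorrelation estimate of Appendix \ref{Appendix:CorrDecay} and the quasi-reversibility estimate are proved for $\mu^N$; and the bulk replacement does \emph{not} rest on a spectral gap estimate --- the paper uses only the crude $H(\nu_0^N\vert\mu^N)\le C_4 N$ entropy bound, the Dirichlet form estimate of Proposition \ref{prop: estimate Dirichlet form}, the local-equilibrium comparison $\mu^N \approx \pi_{\varrho(u)}$ of Proposition \ref{prop:localequilibrium}, and the GPV truncation to densities with vanishing Dirichlet form, reducing to a translation-invariant problem handled as in the periodic case.
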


\begin{figure}[hbtp]
	\begin{tikzpicture}
	  \draw [thick, blue] (-3,0)--(1,0);
	  \draw [thick, green,->] (1,0)--(4,0);
	  \draw [very thick, red] (1,-.1)--(1,.1);
	  \draw [very thick, blue] (0,-.1)--(0,.1);
	  \draw (4.2,-.1) node[anchor=north] { $\theta$};
	  \draw (0,-.1) node[anchor=north,blue] { $0$};
	  \draw (1,-.1) node[anchor=north,red] { $1$};
	  \draw (-1.5,0) node[anchor=south,blue] {Dirichlet};
	  \draw (2.5,0) node[anchor=south,green] {Neumann};
	  \draw (-.1,.7) node[anchor=south,red] {Robin};
	  \draw [thick, red, ->] (.5,.8)--(1,.2);
	\end{tikzpicture}
	\caption{Diagram of the boundary conditions imposed by each value of $\theta$}
  \end{figure}
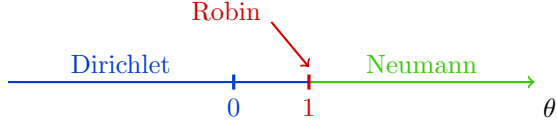

To conclude this section we briefly explain the important classical ingredients of the proof of Theorem \ref{thm: hydrodynamic limit}. Define as usual the empirical measure on $[0,1]$
\begin{equation}\label{def: empirical measure}
m_t^N(\mathrm{d} u) =\frac{1}{N}\sum_{x\in\Lambda_N}\eta_x(t) \delta_\frac{x}{N}(\mathrm{d} u)
\end{equation}
where $\delta_y$ stands for the Dirac mass at $y\in [0,1]$.  Endowing the space $\mathcal{M}_+$ of non-negative measures on $[0,1]$ with the topology of weak convergence of measures,  we see from \eqref{eq:initialprofile} that for our choice of initial distribution, 
$m_0^N$ converges to 
$ \rho^\mathrm{ini}(u)\diff u$ in probability as $N\to+\infty$.
Proving the hydrodynamic limit amounts to showing that 
\[m_t^N(\mathrm{d}u) \xrightarrow[N\to +\infty]{} \rho_t(u)\diff u\]
 in probability for all $t\in [0,T]$,  where $\rho$ is given in Theorem \ref{thm: hydrodynamic limit}.

See the empirical measure $m^N$ as a mapping from $\mathcal{D}\big( [0,T], \Omega_N\big)$ to $\mathcal{D}\big( [0,T],\mathcal{M}_+\big)$, and denote by  $\nota{\Q^N:=\Prob_{\nu_0^N}\circ (m^N)^{-1}}$ the pushforward distribution on $\mathcal{D}\big( [0,T],\mathcal{M}_+\big)$ of the empirical measure's trajectory, corresponding to its law.  The strategy of the proof is the following:
\begin{enumerate}
	\item First,  we prove that the sequence $(\Q^N)_{N\ge 1}$ is tight so that we can consider a limit point $\Q$,  which can be seen as the law of a random variable $m$ with values in $\mathcal{D}\big( [0,T],\mathcal{M}_+\big)$.
	\item Then,  we prove that $\Q$ is concentrated on trajectories of measures which are absolutely continuous with respect to the Lebesgue measure.  This implies that, $\Q$-almost surely~$m$ writes as $m_t(\mathrm{d} u)=\rho_t(u)\diff u$ for some density profile $\rho$.
	\item Finally, we show that $\rho$ is a weak solution to the hydrodynamic equation corresponding to the value of $\theta$ in Theorem \ref{thm: hydrodynamic limit}. By the uniqueness of weak solutions,  we deduce that the sequence $(\Q^N)_{N\ge 1}$ admits a unique limit point,  which is concentrated on the trajectory $\big( t\longmapsto \rho_t(u)\diff u\big)$ whose density is the unique weak solution of the expected hydrodynamic equation.  It proves that the random variables $(m^N)_{N\ge 1}$ converge in distribution to the trajectory $\big( t\longmapsto \rho_t(u)\diff u\big)$,  and therefore in probability since this limit is deterministic. 
\end{enumerate}

Although points (1) and (2) in our context follow straightforwardly from classical arguments \cite[Chapter 5, Section 1]{KL}, point (3) above is very delicate in general, and is tackled  here using  Guo,  Papanicolaou and Varadhan's \emph{entropy method} \cite{guo1988nonlinear}.  This requires understanding the local invariant measure of the process, in particular at the boundaries.  

In the next section, we describe the stationary states for the boundary-driven FEP.

\section{Stationary states}
\label{sec: stationary measures}

\subsection{The equilibrium case $\alpha =\beta$}

We assume in this section that $\alpha =\beta$. From Proposition \ref{prop: ergodic component irreducible} in the appendix, the ergodic component $\mathcal{E}_N$ is irreducible for the dynamics of the boundary-driven FEP. As a consequence, this process admits a unique stationary state \nota{$\mu_\alpha^N$}, and it turns out that it is the restriction to the box $\Lambda_N$ of the grand-canonical state whose density is the one imposed by the reservoirs, namely $\bar\rho (\alpha )$. We state and prove it in the following result.

\begin{proposition}
	In the equilibrium case $\alpha =\beta$, the unique stationary measure $\mu_\alpha^N$ of the boundary-driven FEP is nothing but the restriction of the measure $\pi_{\bar\rho (\alpha )}$ to the box $\Lambda_N$, \textit{i.e.}
	\[\forall \sigma \in\Omega_N, \qquad \mu_\alpha^N (\sigma )=\pi_{\bar\rho (\alpha )}(\eta_{|\Lambda_N}=\sigma ).\]
\end{proposition}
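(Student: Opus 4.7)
The plan is to verify that $\mu := \pi_{\bar\rho(\alpha)}|_{\Lambda_N}$ is reversible with respect to the full generator $\mathcal{L}_N$. Combined with irreducibility on $\mathcal{E}_N$ (from Proposition \ref{prop: ergodic component irreducible} in the appendix), reversibility will imply that $\mu$ is the unique stationary measure, hence equals $\mu_\alpha^N$. The essential input is that $\bar\rho(\alpha)$ is defined so that $\act(\bar\rho(\alpha))=\alpha$, so that via the Markov chain representation of Section \ref{sec:grancan}, the restriction $\mu$ is simply the law of the stationary Markov chain with $\mu(\eta_1=1)=\bar\rho(\alpha)=1/(2-\alpha)$ and transitions $p(1,1)=\alpha$, $p(1,0)=1-\alpha$, $p(0,1)=1$, $p(0,0)=0$.

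I would then check detailed balance separately for each local piece of $\mathcal{L}_N$. For a deep bulk bond $(x,x+1)$ with $2\leq x\leq N-3$, the rate $c_{x,x+1}(\eta)$ depends only on coordinates in $\Lambda_N$, so detailed balance is inherited directly from the reversibility of $\pi_{\bar\rho(\alpha)}$ under $\mathcal{L}_\infty$, established in \cite{blondel2020hydrodynamic}. For the two boundary bulk bonds $(1,2)$ and $(N-2,N-1)$, the rate involves the conventions $\eta_0\equiv\alpha$ and $\eta_N\equiv\beta=\alpha$. The crucial identity here is that under $\pi_{\bar\rho(\alpha)}$ one has $\E_{\pi_{\bar\rho(\alpha)}}[\eta_0\mid \eta_1=1]=\act(\bar\rho(\alpha))=\alpha$, so substituting this conditional expectation for $\eta_0$ in the infinite-volume expression is exactly the convention; partial integration over $\eta_0$ in the infinite-volume detailed balance then yields detailed balance for the convention rates on $\Lambda_N$, and symmetrically for $\eta_N$.

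For the boundary reservoir generator $\mathcal{L}_\ell$, the only pair of configurations exchanged with nonzero rate in both directions is $\sigma=(0,1,\sigma_3,\dots,\sigma_{N-1})\in\mathcal{E}_N$ (the neighbour $\sigma_2=1$ is forced by ergodicity) and $\sigma^1=(1,1,\sigma_3,\dots,\sigma_{N-1})$, with $b_\ell(\sigma)=\alpha$ and $b_\ell(\sigma^1)=(1-\alpha)$. Using the Markov chain description of $\mu$,
\begin{equation*}
\frac{\mu(\sigma^1)}{\mu(\sigma)}=\frac{\bar\rho(\alpha)\,p(1,1)}{(1-\bar\rho(\alpha))\,p(0,1)}=\frac{\bar\rho(\alpha)\,\alpha}{1-\bar\rho(\alpha)}=\frac{\alpha}{1-\alpha},
\end{equation*}
where the last equality uses $\bar\rho(\alpha)=1/(2-\alpha)$; this coincides with $b_\ell(\sigma)/b_\ell(\sigma^1)$ and yields detailed balance for $\mathcal{L}_\ell$. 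The argument for $\mathcal{L}_r$ is identical by symmetry, since $\beta=\alpha$. Summing the local reversibilities gives $\mu\,\mathcal{L}_N=0$, concluding the proof.

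I do not foresee a major obstacle: the proof is an explicit algebraic verification. The only conceptual point to get right is that $\pi_{\bar\rho(\alpha)}$ is \emph{not} a product measure, so one must use its sequential Markov representation to compute density ratios; once this is set up, the identity $\act(\bar\rho(\alpha))=\alpha$ makes the bulk-boundary, bulk-bulk and reservoir detailed balances all collapse to the same cancellation, and transparently explains why reservoirs of density $\alpha$ impose the \emph{different} effective bulk density $\bar\rho(\alpha)$.
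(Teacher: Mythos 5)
Your proposal is correct and follows essentially the same approach as the paper: verify reversibility of $\pi_{\bar\rho(\alpha)}|_{\Lambda_N}$ with respect to each piece of $\mathcal{L}_N$ and invoke irreducibility on $\mathcal{E}_N$ for uniqueness. The one variation is that the paper checks the bulk bonds (including $\{1,2\}$ and $\{N-2,N-1\}$) by an explicit enumeration of the two possible local configurations, whereas you invoke the known infinite-volume reversibility of $\pi_{\bar\rho(\alpha)}$ for the deep bonds and derive detailed balance at $\{1,2\}$ by integrating $\eta_0$ out of the infinite-volume relation via the identity $\E_{\pi_{\bar\rho(\alpha)}}[\eta_0\mid\eta_1=1]=\act(\bar\rho(\alpha))=\alpha$ (using reversibility of the two-state stationary chain); this is a slightly more conceptual packaging of the same cancellation, and it transparently shows why the convention $\eta_0\equiv\alpha$ is exactly the right surrogate for the missing site.
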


\begin{proof}
We will check that this measure is reversible with respect to both the bulk dynamics, and the boundary dynamics. Consider a jump occurring inside the bulk over an edge $\lbrace x,x+1\rbrace$ that does not touch the boundaries (\textit{i.e.}~$x\neq 1,N-2$). This means that the local configuration $(\eta_{x-1},\eta_x,\eta_{x+1},\eta_{x+2})$ must be either $\bullets\bulletx\circs\bullets$ or $\bullets\circx\bullets\bullets$ (where $\bullets$ stands for a particle and $\circs$ for an empty site), in order for the configuration to be ergodic, and to satisfy $c_{x,x+1}(\eta )\neq 0$. The probabilities of observing these two local configurations under $\pi_{\bar\rho (\alpha )}$ are respectively $\bar\rho (\alpha)\alpha (1-\alpha)$ and $\bar\rho (\alpha )(1-\alpha )\alpha$. Since both jumps (back and forth) occur at the same rate 1, this proves that this measure is reversible with respect to any FEP jump occurring inside $\integers{2}{N-2}$. The same is true for jumps between sites 1 and 2: the latter can only occur if the local configuration $(\eta_1,\eta_2,\eta_3)$ is ergodic and has non-zero jump rate $c_{1,2}(\eta )\neq 0$, so it must be given by $\bulletone\circs\bullets$ or $\circone\bullets\bullets$. On the one hand, we have
\[ \pi_{\bar\rho (\alpha )}(\bulletone\circs\bullets )\times c_{1,2}(\bulletone\circs\bullets ) = \bar\rho (\alpha )(1-\alpha )\times \alpha,\]
and on the other hand
\[ \pi_{\bar\rho (\alpha)}(\circone\bullets\bullets)\times c_{1,2}(\circone\bullets\bullets)=\big( 1-\bar\rho (\alpha)\big)\alpha\times 1,\]
and these quantities are equal by definition of $\bar\rho (\alpha )$ (recall  \eqref{def:rhobar}). This proves that the measure is reversible with respect to any jump through the edge $\lbrace 1,2\rbrace$, and we can prove the same for the edge $\lbrace N-2,N-1\rbrace$. In other words, $\mu_\alpha^N$ is reversible with respect to the generator $\mathcal{L}_0$, and we only have to prove that it is also reversible with respect to the boundary generators $\mathcal{L}_\ell$ and $\mathcal{L}_r$.

Let us consider only the case of the generator $\mathcal{L}_\ell$ because the generator $\mathcal{L}_r$ can be treated in the exact same way. Once again, to ensure that the boundary rate $b_\ell (\eta )$ is not zero and the configuration is ergodic, the local configuration $(\eta_1,\eta_2)$ must be either $\bulletone\bullets$ or $\circone\bullets$. The probability of the first one is $\bar\rho (\alpha )\alpha$, and that of the second one $1-\bar\rho (\alpha )=(1-\alpha )\bar\rho (\alpha )$. As the transition rate $\bulletone\bullets\mapsto \circone\bullets$ is $\kappa N^{-\theta}(1-\alpha)$, and the transition rate $\circone\bullets\mapsto\bulletone\bullets$ is $\kappa N^{-\theta}\alpha$, the reversibility with respect to the boundary dynamics is proved.
\end{proof}

\begin{remark}\upshape
	This result is analogous to the following well-known result about the SSEP. The grand-canonical measures for the SSEP are known to be the Bernoulli product measures with constant density. In the equilibrium case of the boundary-driven SSEP, the unique stationary measure is the restriction of this grand-canonical measure whose density is the one of the reservoirs.
\end{remark}

\subsection{The non-equilibrium case $\alpha\neq\beta$} 
We now get back to the general case where $\alpha\neq\beta$. As in the equilibrium case, there exists a unique stationary measure \nota{$\bar\mu^N$} but due to the presence of long-range correlations (already present in the non-equilibrium SSEP), we have no explicit expression for it. 

Though, we can get some information on $\bar\mu^N$ by simple considerations. First of all, for any $\theta\in\R$, let   \nota{$\rho_\theta^{ss}$} be the \emph{stationary solution} of the hydrodynamic   equation corresponding to the value of $\theta$, namely the fast diffusion equation given in Definition \ref{defin:weaksolDirichlet} if $\theta < 1$, Definition \ref{defin:weaksolRobin} if $\theta=1$ and Definition \ref{defin:weaksolNeumann} if $\theta >1$. Simple computations show that this stationary solution satisfies
\begin{equation}\label{eq:stationarysol_PDE}
	\forall u\in [0,1],\qquad \act\big(\rho_\theta^{ss}(u)\big)=\begin{cases}
		\alpha +(\beta -\alpha )u & \mbox{ if }\theta <1,\\
		\alpha +(\beta -\alpha )\dfrac{\kappa u+1}{\kappa +2} & \mbox{ if }\theta =1,\\
		\dfrac{\alpha +\beta}{2} &\mbox{ if }\theta >1.
	\end{cases}
\end{equation}
The following result states that the active density field under the stationary state is close to this quantity.
\begin{lemma} The active density field under the non-equilibrium stationary measure $\bar\mu^N$ given by 
	$\bar a_x:=\bar \mu^N\big(h_x(\eta)\big)$, $x\in\Lambda_N$, satisfies
	\begin{equation} \label{approx_abar} \sup_{x\in\Lambda_N} \Big| \bar a_x - \act\big(\rho_\theta^{ss}(\tfrac x N)\big) \Big| \xrightarrow[N\to\infty]{}0.\end{equation}
\end{lemma}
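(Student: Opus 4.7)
The plan is to exploit two structural features simultaneously: the stationarity of $\bar\mu^N$ and the gradient form of the FEP currents identified in \eqref{eq:gradientcondition}–\eqref{eq:gradientboundaries}. For each site $x\in\Lambda_N$, a direct computation shows that $\mathcal{L}_N\eta_x = j_{x-1,x}(\eta)-j_{x,x+1}(\eta)$, using the conventions of \eqref{def:currentboundaries} at the boundary sites. Taking the expectation under $\bar\mu^N$ and using that $\bar\mu^N$ is stationary yields $\bar\mu^N\bigl[j_{x-1,x}\bigr]=\bar\mu^N\bigl[j_{x,x+1}\bigr]$ for every $x$, so there is a common value $J_N$ (the \emph{stationary current}) independent of $x$.

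Next, I would plug in the gradient decomposition. In the bulk, $J_N=\bar a_x-\bar a_{x+1}$ for $x\in\integers{1}{N-2}$, so the map $x\mapsto\bar a_x$ is affine: $\bar a_x=\bar a_1-(x-1)J_N$. At the two boundary edges, using \eqref{eq:gradientboundaries} together with the conventions $h_0\equiv\alpha$ and $h_N\equiv\beta$ from \eqref{def:conventions2}, one obtains
\begin{equation*}
J_N=\frac{\kappa}{N^\theta}(\alpha-\bar a_1)=\frac{\kappa}{N^\theta}(\bar a_{N-1}-\beta).
\end{equation*}
Combined with the affine formula for $\bar a_{N-1}$, this gives the closed-form expressions
\begin{equation*}
J_N=\frac{\alpha-\beta}{N-2+2N^\theta/\kappa},\qquad \bar a_x=\alpha-\frac{J_N N^\theta}{\kappa}-(x-1)J_N.
\end{equation*}

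Finally, I would perform elementary asymptotics on these two explicit formulas in the three regimes identified in \eqref{eq:stationarysol_PDE}. If $\theta<1$, the denominator is dominated by $N$, hence $(x-1)J_N\to(\beta-\alpha)u$ uniformly in $x$ with $u=x/N$ and the boundary shift $J_N N^\theta/\kappa$ is of order $N^{\theta-1}\to 0$, yielding $\bar a_x\to\alpha+(\beta-\alpha)u$. If $\theta=1$, the denominator behaves like $N(1+2/\kappa)$, so $J_N N/\kappa\to(\alpha-\beta)/(\kappa+2)$ and $(x-1)J_N\to u\kappa(\alpha-\beta)/(\kappa+2)$, which after simplification recovers $\alpha+(\beta-\alpha)(\kappa u+1)/(\kappa+2)$. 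If $\theta>1$, the term $2N^\theta/\kappa$ dominates, so $J_N N^\theta/\kappa\to(\alpha-\beta)/2$ while $(x-1)J_N=O(N^{1-\theta})\to 0$ uniformly, giving $\bar a_x\to(\alpha+\beta)/2$. In each case the convergence is uniform in $x\in\Lambda_N$ because the error terms in the asymptotic expansions are controlled by negative powers of $N$ independent of $x$, which is precisely \eqref{approx_abar}.

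No real obstacle is expected: the argument is a direct combination of stationarity with the gradient decomposition, and the only computational input beyond that is matching the limits with the explicit stationary profiles \eqref{eq:stationarysol_PDE}, which is a straightforward check. The mild subtlety is only to keep track of the boundary conventions $h_0\equiv\alpha$, $h_N\equiv\beta$ so that the gradient identities extend cleanly to the boundary edges.
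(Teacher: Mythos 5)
Your proof is correct and follows essentially the same route as the paper: both exploit the stationarity identity $\bar\mu^N(\mathcal{L}_N\eta_x)=0$ and the gradient decomposition to pin down the active density field, and both arrive at the same closed form $\bar a_x = \alpha + (\beta-\alpha)\frac{\kappa(x-1)+N^\theta}{\kappa(N-2)+2N^\theta}$ before carrying out the elementary asymptotics. The only difference is cosmetic: you phrase the recursion through the stationary current $J_N$ being constant in $x$, while the paper writes the equivalent discrete Laplace equation $\bar a_{x-1}-2\bar a_x+\bar a_{x+1}=0$.
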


\begin{proof} Recall the definition of the current $j_{x,x+1}(\eta )$ in \eqref{def:current}, of $h_x(\eta )$ in \eqref{def:hx}, and of the gradient condition \eqref{eq:gradientcondition}. Under the stationary state, we can make the following simple computation
\begin{equation}
\label{eq:gradh0}
0=\bar\mu^N\big(\mathcal{L}_N\eta_x\big) = \bar\mu^N\big( j_{x-1,x}(\eta )-j_{x,x+1}(\eta )\big)= \bar\mu^N\big( h_{x-1}(\eta )-2h_x(\eta )+h_{x+1}(\eta )\big)
\end{equation}
that holds for any $x\in \integers{2}{N-2}$. At the boundaries, the same reasoning gives
\begin{align*} \frac{\kappa}{N^\theta}\big\{\bar\mu^N(h_0(\eta)-h_1(\eta))\big\}&=\bar\mu^N(h_1(\eta)-h_2(\eta)),\\
	\frac{\kappa}{N^\theta}\big\{\bar\mu^N(h_{N-1}(\eta)-h_N(\eta))\big\}&=\bar\mu^N(h_{N-2}(\eta)-h_{N-1}(\eta)).
	\end{align*}
These relations, together with the convention \eqref{def:conventions2} are sufficient to get an explicit expression of the active density field, we find
\begin{equation}\label{eq:barax}
	\forall x\in\Lambda_N,\qquad \bar{a}_x = \alpha +(\beta -\alpha )\frac{\kappa (x-1)+N^\theta}{\kappa (N-2)+2N^\theta}
\end{equation}
One can then easily conclude the proof. \end{proof}

\begin{remark}\upshape
	We expect the stationary state $\bar\mu^N$ to look locally like a grand-canonical state, and in particular, if we define the density field $\nota{\bar\rho_x^N} = \bar\mu^N(\eta_x)$ under this measure, we expect a relation like \eqref{eq:relationdensities_pirho} to hold locally, \textit{i.e.} 
\begin{equation}
	\bar\rho_x^N \approx \frac{1}{2-\bar{a}_x},
\end{equation}
and from  \eqref{approx_abar} the latter is close to $\rho^{ss}_\theta\big(\frac xN\big)$. However we are unable to prove it. In Figure \ref{fig: simu alpha neq beta} we plot a numerical simulation of the density field $\bar\rho_x^N$ and of the profile $\rho_\theta^{ss}$ in the case $\theta =0$, and we can see that both are very close to each other, confirming our prediction.

\begin{figure}[hbtp]
	\centering
	\includegraphics[scale=.5]{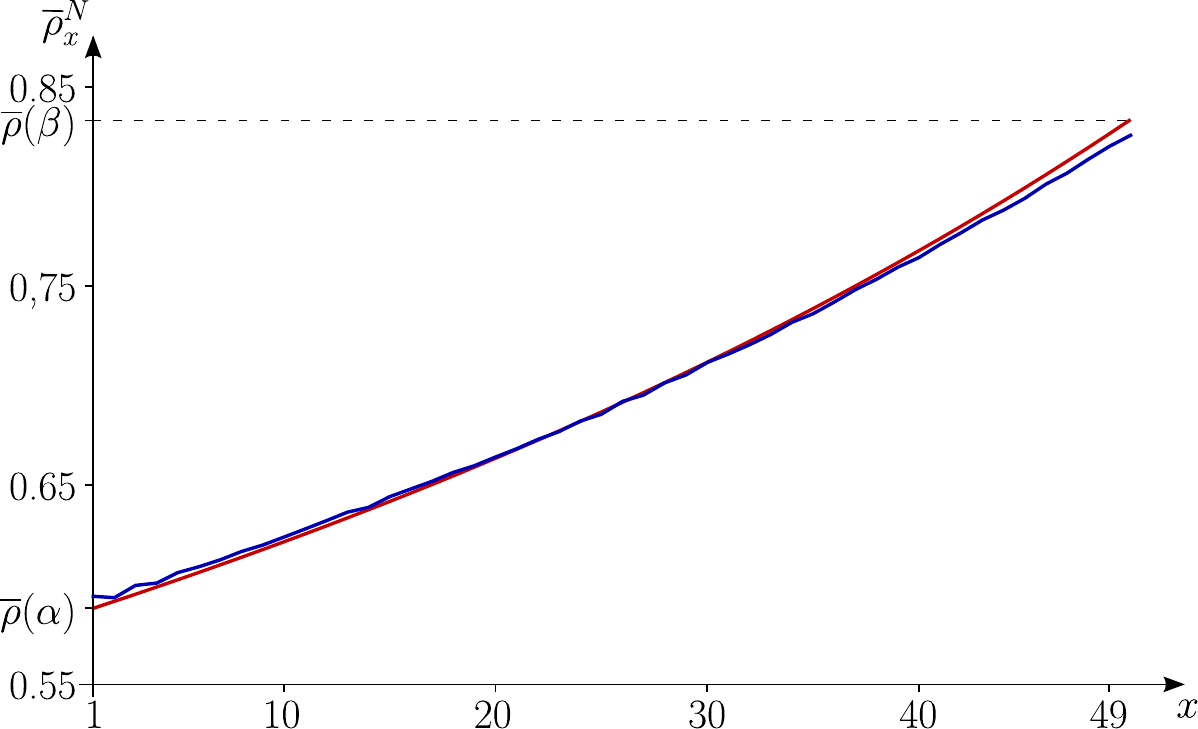}
	\caption{Numerical simulation of the density field $\bar\rho^N_x$ (in blue), and plot of the stationary profile $\rho_0^{ss}$  (in red) for $N=50$,  $\alpha =0.3$, $\beta= 0.8$ and $\theta =0$.}
	\label{fig: simu alpha neq beta}
\end{figure}

\end{remark}

Since we do not have any further information about this stationary measure $\bar\mu^N$, the next section is dedicated to constructing an explicit reference measure $\mu^N$  relying on the Markovian construction of the grand-canonical measures, and on the fact that the active density field is affine under the stationary state (see \eqref{eq:barax}). This reference measure will be crucial later on to prove the replacement lemmas in Sections \ref{sec:replacement_boundary} and \ref{sec: Replacement lemma bulk}, which are at the center of the proof of Theorem \ref{thm: hydrodynamic limit}.

\section{Reference measure and Dirichlet form estimates}
\label{sec:approx}

\subsection{Construction of a reference measure $\mu^N$}

First, recall that in the transition probabilities \eqref{transition grand-canonical} in the Markovian construction of the grand-canonical state $\pi_\rho$, the probability that an occupied site follows another occupied site is equal to the density of active particles $\mathfrak{a}(\rho)$. This is quite intuitive since if site $x$ is occupied, then a particle at site $x+1$ is automatically active. Besides, we know from \eqref{eq:barax} that under the stationary measure, the active density is affine. 

From these observations, we are now constructing a measure $\mu^N$ being the law of an inhomogeneous Markov chain started at a Bernoulli random variable with parameter $\bar\rho (\alpha )$, and using a suitable active density field for the transition probabilities. In particular, we will ensure that jumps inside the bulk are ``quasi-reversible'' (in the sense of Lemma \ref{lem:quasi-reversibility} below), and that exchanges with the reservoirs are reversible. Thanks to these properties, we will be able to use the measure $\mu^N$  as a reference measure (in the entropy method developed in the next sections) for any value of $\theta$.

More precisely, let us define the active density field
\begin{equation}
	\label{def:ax}
	\forall x\in \Lambda_N ,\qquad a_x = \frac{\beta -\alpha}{N-2}(x-1)+\alpha = \varepsilon_N(x-1)+\alpha\qquad\mbox{ with } \qquad \varepsilon_N=\frac{\beta -\alpha}{N-2},
\end{equation}
and let $\mu^N$ be the measure under which $(\eta_x)_{1\le x\le N-1}$ is an inhomogeneous Markov chain on $\lbrace 0,1\rbrace$, starting at $\eta_1\sim\mathrm{Ber}(\bar\rho (\alpha))$ and with transition probabilities
\begin{equation}
	\label{eq:transitionsmuN}
	\mu^N(\eta_{x+1}=1|\eta_x=1)=a_{x+1}\quad\mbox{ and }\quad \mu^N(\eta_{x+1}=1|\eta_x=0)=1\quad\mbox{ for }x\in\integers{1}{N-2}.
\end{equation}
This procedure builds a measure $\mu^N$ which is concentrated on the ergodic component $\mathcal{E}_N$ since as soon as a site is empty, the next one is occupied. We obtain the following explicit formula for the measure $\mu^N$
\begin{equation}
	\label{eq:muN_explicit}
	\mu^N(\eta )= \bar\rho (\alpha )^{\eta_1}\big( 1-\bar\rho (\alpha)\big)^{1-\eta_1}\times\prod_{x=2}^{N-1}\Big(\eta_{x-1}a_x^{\eta_x}(1-a_x)^{1-\eta_x} + (1-\eta_{x-1})\eta_x\Big)
\end{equation}
for all $\eta\in\Omega_N$. Let \nota{$\rho_x^N:=\mu^N(\eta_x=1)$} be the density profile under the measure $\mu^N$. By the Markovian construction and the relation \eqref{eq:transitionsmuN}, it satisfies the recurrence relation
\begin{equation}
	\label{eq:recurrencerelationmuN}
	\rho_1^N=\bar\rho (\alpha )\quad\mbox{ and }\quad \rho_x^N = 1-\rho_{x-1}^N+a_x\rho_{x-1}^N\qquad \mbox{ for all }x\in\integers{2}{N-1}.
\end{equation}

\subsection{Technical estimates on $\mu^N$}

The rest of this section is dedicated to proving some technical results on this reference measure. We first claim that under $\mu^N$, the relation \eqref{def:rhobar} between total and active densities is asymptotically satisfied.

\begin{proposition}
\label{prop: relation rho,rhoact muN}
	There exists a constant $C_0>0$, depending only on $\alpha,\beta$ such that for all $x\in\Lambda_N$, we have
	\begin{equation}
		\left|\rho_x^N -\frac{1}{2-a_x}\right| \le \frac{C_0}{N}, \qquad \text{for all } x \in \Lambda_N.
	\end{equation}
\end{proposition}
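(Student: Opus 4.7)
The plan is to analyze the recurrence \eqref{eq:recurrencerelationmuN} directly, and show that the error $\delta_x := \rho_x^N - \frac{1}{2-a_x}$ satisfies a contractive recursion with a forcing term of order $1/N$.

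First observe that $a_1 = \alpha$, so the initialization $\rho_1^N = \bar\rho(\alpha) = \frac{1}{2-\alpha}$ gives $\delta_1 = 0$. For $x \geq 2$, rewriting \eqref{eq:recurrencerelationmuN} as $\rho_x^N = 1-(1-a_x)\rho_{x-1}^N$ and substituting $\rho_{x-1}^N = \frac{1}{2-a_{x-1}} + \delta_{x-1}$, I obtain
\begin{equation*}
\delta_x = -(1-a_x)\,\delta_{x-1} + R_x,
\qquad\text{where}\qquad
R_x = 1-\frac{1-a_x}{2-a_{x-1}} - \frac{1}{2-a_x}.
\end{equation*}
The key algebraic step is to compute $R_x$ explicitly. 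Using $a_x-a_{x-1} = \varepsilon_N = \frac{\beta-\alpha}{N-2}$, a direct computation yields
\begin{equation*}
R_x = \frac{\varepsilon_N(1-a_x)}{(2-a_{x-1})(2-a_x)},
\end{equation*}
which gives $|R_x| \leq C_1/N$ for some constant $C_1$ depending only on $\alpha, \beta$ (since $a_x$ stays in a compact subset of $(0,1)$).

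Next, since $a_x \in [\min(\alpha,\beta),\max(\alpha,\beta)]$ for all $x$, there is a constant $\gamma := 1-\min(\alpha,\beta) \in (0,1)$ such that $|1-a_x| \leq \gamma$. The recurrence $|\delta_x| \leq \gamma |\delta_{x-1}| + C_1/N$ with $\delta_1 = 0$ then gives by immediate induction
\begin{equation*}
|\delta_x| \leq \frac{C_1}{N}\sum_{k=0}^{x-2}\gamma^k \leq \frac{C_1}{N(1-\gamma)},
\end{equation*}
so the claim holds with $C_0 = C_1/(1-\gamma)$.

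There is no substantive obstacle here: the argument is a one-step discrete Grönwall/geometric-series bound, and the only nontrivial piece is the algebraic simplification of $R_x$ that exposes the $\varepsilon_N = O(1/N)$ factor. The essential structural reason this works is that $\rho \mapsto \frac{1}{2-a}$ is precisely the fixed point of the map $\rho \mapsto 1-(1-a)\rho$, so that freezing $a_x \equiv a$ would make the recursion exactly stationary; the affine profile \eqref{def:ax} varies slowly enough (by $O(1/N)$ per step) to only perturb this fixed point by $O(1/N)$ uniformly.
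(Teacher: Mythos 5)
Your proof is correct and follows essentially the same approach as the paper: you define $\delta_x = \rho_x^N - \frac{1}{2-a_x}$, derive the contractive recursion $\delta_x = -(1-a_x)\delta_{x-1} + R_x$ with $|R_x| = \mathcal{O}(\varepsilon_N)$, and close via a geometric series using $\delta_1 = 0$. The paper's derivation groups the factor $(a_x-1)$ slightly differently but arrives at exactly the same remainder term $\frac{(1-a_x)\varepsilon_N}{(2-a_{x-1})(2-a_x)}$ and the same contraction constant $c = 1-\min(\alpha,\beta)$.
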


\begin{proof}
	For $x\in\Lambda_N$, define
	\[\delta_x:=\rho_x^N - \frac{1}{2-a_x}.\]
	Using \eqref{eq:recurrencerelationmuN}, we obtain that for $x\ge 2$,
	\begin{align*}
        \delta_x = 1-\rho_{x-1}^N+a_x\rho_{x-1}^N-\frac{1}{2-a_x} & = \rho_{x-1}^N(a_x-1)+1-\frac{1}{2-a_x}\\
     & = \delta_{x-1}(a_x-1)+ (a_x-1)\left( \frac{1}{2-a_{x-1}}-\frac{1}{2-a_x}\right)\\
        & = (a_x-1)\left( \delta_{x-1} +\frac{a_{x-1}-a_x}{(2-a_{x-1})(2-a_x)}\right).
    \end{align*}
	Note that: $|a_x-1| \le c$ with $c=|\alpha\wedge\beta -1|<1$, $|a_x-2|\ge 1$ and besides $a_x-a_{x-1}=\varepsilon_N$ defined in \eqref{def:ax}. Therefore we get that
	\[ |\delta_x| \le c|\delta_{x-1}|+c|\varepsilon_N|.\]
	By induction, we deduce that
	\[ |\delta_x|\le c^{x-1}|\delta_1| + |\varepsilon_N| \sum_{k=1}^{x-1}c^k \le c^{x-1}|\delta_1|+ \frac{\varepsilon_N}{1-c}.\]
	But by definition of $\bar\rho (\alpha )$ we have $\delta_1=0$ so only the second term on the right hand side remains, and it is clearly of order $\mathcal{O}(\frac1N)$.
\end{proof}

We now state and prove the property of \emph{local equilibrium} satisfied by $\mu^N$. In other words, $\mu^N$ is close, locally, to the grand-canonical state associated to the local density. For that purpose, when $\ell\ge 1$, we define the box
\begin{equation}
	\label{def:lambdaxell}
	\Lambda_x^\ell = \begin{cases}
		\integers{1}{2\ell +1} & \mbox{ if } x\in \integers{1}{\ell}, \\
		\integers{x-\ell}{x+\ell} & \mbox{ if } x\in \integers{\ell +1}{N-\ell -1},\\
		\integers{N-2\ell -1}{N-1} & \mbox{ if } x\in \integers{N-\ell}{N-1}.
	\end{cases}
\end{equation}
This is a box of size $2\ell +1$ that contains $x$, but is not necessarily centered around it when $x$ is too close to the boundary.

\begin{proposition}
	\label{prop:localequilibrium}
	Fix $\ell\ge 1$, take $u\in (\frac1N ,1)$,  and define $x=\lfloor uN\rfloor$. Then, there exists a constant $C_1=C_1(\alpha ,\beta ,\ell )>0$ (independent of $u$) such that for any local configuration $\sigma\in \lbrace 0,1\rbrace^{2\ell +1}$, we have
	\begin{equation}
		\big|\mu^N(\eta_{|\Lambda_x^\ell}=\sigma )-\pi_{\varrho (u)}(\eta_{|\integers{-\ell}{\ell}}=\sigma)\big| \le \frac{C_1}{N}
	\end{equation}
	where
	\begin{equation}
		\label{def:varrho}
		 \varrho (u):=\frac{1}{2-(\alpha +(\beta -\alpha)u)} =\lim_{N\to+\infty} \rho_x^N
	\end{equation}
	is the approximated local density around the point $x$ under $\mu^N$. 
\end{proposition}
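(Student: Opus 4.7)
The strategy is to exploit the fact that both $\mu^N$ and $\pi_{\varrho(u)}$ are laws of inhomogeneous (resp. homogeneous) Markov chains with essentially the same one-step transitions, and to control the discrepancy via a telescoping argument.

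\textbf{Step 1: rewrite both measures as Markov products.} For concreteness, consider first the bulk case $x\in\integers{\ell+1}{N-\ell-1}$, so that $\Lambda_x^\ell=\integers{x-\ell}{x+\ell}$. Writing $y_0=x-\ell$ and using the Markov property of $\mu^N$ together with \eqref{eq:transitionsmuN}, we get
\begin{equation*}
	\mu^N\big(\eta_{|\Lambda_x^\ell}=\sigma\big) \;=\; p_{y_0}^N(\sigma_{y_0})\;\prod_{y=y_0}^{x+\ell-1} P^N_{y,y+1}(\sigma_y,\sigma_{y+1}),
\end{equation*}
where $p_y^N(1)=\rho_y^N$, $p_y^N(0)=1-\rho_y^N$, and $P^N_{y,y+1}$ has entries $a_{y+1},\ 1-a_{y+1},\ 1,\ 0$ on the transitions $(1,1),(1,0),(0,1),(0,0)$ respectively. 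Analogously, $\pi_{\varrho(u)}(\eta_{|\integers{-\ell}{\ell}}=\sigma)$ is a product of the same shape with $\rho_y^N$ replaced by $\varrho(u)$ (or $1-\varrho(u)$) and each $a_{y+1}$ replaced by $\mathfrak{a}(\varrho(u))=\alpha+(\beta-\alpha)u$. If $\sigma\notin\mathcal{E}_{\Lambda_x^\ell}$ both measures vanish and there is nothing to prove.

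\textbf{Step 2: pointwise comparison of the factors.} For $y\in\Lambda_x^\ell$ one has, using the explicit form of $a_{y+1}$ from \eqref{def:ax},
\begin{equation*}
	\big|a_{y+1}-\mathfrak{a}(\varrho(u))\big| \;=\; |\beta-\alpha|\,\Big|\tfrac{y}{N-2}-u\Big| \;\le\; C\,\frac{\ell+1}{N},
\end{equation*}
since $|y-x|\le\ell$ and $x=\lfloor uN\rfloor$. Consequently every entry of $P^N_{y,y+1}$ differs from the corresponding entry of the transition matrix of $\pi_{\varrho(u)}$ by at most $C(\ell+1)/N$. For the initial-site factor, Proposition~\ref{prop: relation rho,rhoact muN} gives $|\rho_{y_0}^N-(2-a_{y_0})^{-1}|\le C_0/N$, and since $u\mapsto(2-\alpha-(\beta-\alpha)u)^{-1}$ is Lipschitz on $[0,1]$ we also have $|(2-a_{y_0})^{-1}-\varrho(u)|\le C(\ell+1)/N$; hence $|p_{y_0}^N(\sigma_{y_0})-\pi_{\varrho(u)}(\eta_{-\ell}=\sigma_{y_0})|\le C'(\ell+1)/N$.

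\textbf{Step 3: telescoping.} Both products involve $2\ell+1$ factors, each bounded by $1$, and each pair of homologous factors differs by at most $C''(\ell+1)/N$. A standard telescoping estimate yields
\begin{equation*}
	\Big|\mu^N(\eta_{|\Lambda_x^\ell}=\sigma)-\pi_{\varrho(u)}(\eta_{|\integers{-\ell}{\ell}}=\sigma)\Big| \;\le\; (2\ell+1)\,C''\,\frac{\ell+1}{N} \;\le\; \frac{C_1(\alpha,\beta,\ell)}{N},
\end{equation*}
which is the announced bound.

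\textbf{Step 4: boundary cases.} When $x\in\integers{1}{\ell}$, the box is $\integers{1}{2\ell+1}$ and the chain starts at $\eta_1\sim\mathrm{Ber}(\bar\rho(\alpha))$; since in that regime $u=\mathcal{O}(\ell/N)$ we have $\varrho(u)-\bar\rho(\alpha)=\mathcal{O}(\ell/N)$, so the initial distributions again differ by $\mathcal{O}(\ell/N)$ and the telescoping argument of Step~3 goes through verbatim. The symmetric case $x\in\integers{N-\ell}{N-1}$ is handled identically using Proposition~\ref{prop: relation rho,rhoact muN} to compare $\rho_{N-2\ell-1}^N$ with $\varrho(u)$.

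\textbf{Expected difficulty.} None of the steps is deep; the only mild obstacle is making sure that the local approximation of the affine active density by the constant $\mathfrak{a}(\varrho(u))$ is controlled uniformly in $u\in(1/N,1)$, and that the three regimes in \eqref{def:lambdaxell} are treated consistently so that the constant $C_1$ may indeed be chosen independently of $u$.
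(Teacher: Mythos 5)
Your proof is correct and follows essentially the same route as the paper: compare the Markovian constructions of $\mu^N$ and $\pi_{\varrho(u)}$ factor by factor, show the initial probabilities and transition probabilities each differ by $\mathcal{O}(\ell/N)$ (using Proposition \ref{prop: relation rho,rhoact muN} for the former and the explicit affine form \eqref{def:ax} for the latter), and conclude by a telescoping product estimate. Your treatment is a bit more explicit about the boundary cases of $\Lambda_x^\ell$, but the argument is the same.
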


\begin{proof}
	Let $y_-=\min\Lambda_x^\ell$ and $y_+=\max\Lambda_x^\ell$. Both distributions $\mu^N$ and $\pi_{\varrho (u)}$ can be built by a Markovian construction according to \eqref{eq:transitionsmuN}, with the difference that one has constant transition rates, and the other does not. Assuming that two sequences $u_k,v_k\in (0,1)$ satisfy $|u_k-v_k|\le\delta$ for any $k\in\integers{y_-}{y_+}$, then the difference
	\[ \left|\prod_{k=y_-}^{y_+} u_k-\prod_{k=y_-}^{y_+}v_k\right| \le C_\ell \delta\]
	is also of order $\delta$. So by the Markovian construction of both $\mu^N$ and $\pi_{\varrho (u)}$, it is enough to show that 
	\begin{align}
        & \big| \rho_{y_-}^N-\varrho (u)\big| \le \frac{C}{N}, \label{cond1}\\
        & \big| a_y - \frak{a}(\varrho (u))\big| \le \frac CN \qquad \forall y\in [\![y_-+1,y_+]\!], \label{cond2}
    \end{align}
	where $C>0$ is some constant that depends on $\alpha ,\beta ,\ell$, but not on $N$. Writing 
	\[\big| \rho_{y_-}^N-\varrho (u)\big| \le \left| \rho_{y_-}^N -\frac{1}{2-a_{y_-}}\right| + \left| \frac{1}{2-a_{y_-}}-\varrho (u)\right| ,\]
	we see that \eqref{cond1} is satisfied using Proposition \ref{prop: relation rho,rhoact muN} to bound the first term, and the fact that $y_-$ is at distance at most $\ell$ of $x$ to bound the second one. By definition of $a_y$ and $\varrho (u)$, the condition \eqref{cond2} is also clearly satisfied.
\end{proof}

We now claim that the marginal of $\mu^N$ with respect to two distant boxes is roughly a product measure:

\begin{corollary}
	\label{cor:2BEdec}
	Fix $\ell\ge 1$, and take $u,v\in (\frac1N,1)$ such that 
	\begin{equation}
	\label{eq:bounduv}
	v-u\ge \frac{(\log N)^2 +1}{N}.
	\end{equation}
	Defining $x=\lfloor uN\rfloor$ and $y=\lfloor vN\rfloor$, we have $x,y\in\Lambda_N$ with $y-x\ge (\log N)^2$. Then, there exists a constant $C_2=C_2(\alpha ,\beta ,\ell )>0$ such that for any $u,v$ satisfying \eqref{eq:bounduv}, and any local configurations $\sigma ,\sigma'\in\lbrace 0,1\rbrace^{2\ell +1}$, we have
	\begin{equation}
		\big| \mu^N(\eta_{|\Lambda_x^\ell}=\sigma , \eta_{|\Lambda_y^\ell}=\sigma ')-\pi_{\varrho (u)}(\eta_{|\integers{-\ell}{\ell}}=\sigma)\pi_{\varrho (v)}(\eta_{|\integers{-\ell}{\ell}}=\sigma')\big| \le \frac{C_2}{N}.
	\end{equation}
\end{corollary}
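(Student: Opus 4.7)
The plan is to use a triangle inequality to compare the joint law under $\mu^N$ first with the product of its $\mu^N$-marginals, and then apply Proposition \ref{prop:localequilibrium} to each factor separately. Writing $A=\{\eta_{|\Lambda_x^\ell}=\sigma\}$ and $B=\{\eta_{|\Lambda_y^\ell}=\sigma'\}$, we bound
\begin{equation*}
\big|\mu^N(A\cap B)-\pi_{\varrho(u)}(A)\pi_{\varrho(v)}(B)\big|\le \big|\mu^N(A\cap B)-\mu^N(A)\mu^N(B)\big|+\big|\mu^N(A)\mu^N(B)-\pi_{\varrho(u)}(A)\pi_{\varrho(v)}(B)\big|.
\end{equation*}
The second term is handled routinely: by the identity $|pq-p'q'|\le |p-p'|+|q-q'|$ (using that all quantities lie in $[0,1]$) and Proposition \ref{prop:localequilibrium}, it is bounded by $2C_1/N$.

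The real content is the decorrelation estimate $|\mu^N(A\cap B)-\mu^N(A)\mu^N(B)|=\mathcal{O}(1/N)$. The key point is that $\mu^N$ is by construction an inhomogeneous Markov chain, so conditioning on the value at the rightmost site $r:=\max\Lambda_x^\ell$ of the first box makes $\eta_{|\Lambda_y^\ell}$ conditionally independent of $\eta_{|\Lambda_x^\ell}$ (for $N$ large enough the boxes are disjoint, even in the boundary cases of \eqref{def:lambdaxell}). Denoting by $\sigma_r$ the value that $\sigma$ assigns to site $r$, the Markov property yields
\begin{equation*}
\mu^N(A\cap B)-\mu^N(A)\mu^N(B)=\mu^N(A)\Big[\mu^N(B\mid \eta_r=\sigma_r)-\sum_{a\in\{0,1\}}\mu^N(\eta_r=a)\mu^N(B\mid \eta_r=a)\Big],
\end{equation*}
so it suffices to bound $\max_{a,a'\in\{0,1\}}|\mu^N(B\mid\eta_r=a)-\mu^N(B\mid\eta_r=a')|$.

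The main obstacle, and the only non-routine step, is this mixing estimate: I would prove it directly from the explicit recursion of the Markov chain. Setting $q_z^{(a)}:=\mu^N(\eta_z=1\mid\eta_r=a)$, the transition rules \eqref{eq:transitionsmuN} give $q_{z+1}^{(a)}=1-q_z^{(a)}(1-a_{z+1})$, hence
\begin{equation*}
q_{z+1}^{(1)}-q_{z+1}^{(0)}=-(1-a_{z+1})\big(q_z^{(1)}-q_z^{(0)}\big).
\end{equation*}
Since $|1-a_z|\le c:=1-(\alpha\wedge\beta)<1$ uniformly in $z$ and $q_r^{(1)}-q_r^{(0)}=1$, iterating from $r$ to $r':=\min\Lambda_y^\ell$ gives $|q_{r'}^{(1)}-q_{r'}^{(0)}|\le c^{r'-r}\le c^{(\log N)^2-2\ell}$. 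Decomposing $\mu^N(B\mid\eta_r=a)$ according to the value of $\eta_{r'}$ and using the Markov property (the conditional law of $\eta_{|\Lambda_y^\ell}$ given $\eta_{r'}$ is independent of $a$), this same bound transfers to $|\mu^N(B\mid\eta_r=1)-\mu^N(B\mid\eta_r=0)|$. Since $c^{(\log N)^2}$ decays faster than any power of $1/N$, this term is absorbed into the $\mathcal{O}(1/N)$ coming from the local equilibrium estimate, completing the proof.
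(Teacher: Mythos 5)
Your proof is correct and takes a genuinely different route from the paper for the key decorrelation step. The paper handles $|\mu^N(A\cap B)-\mu^N(A)\mu^N(B)|=\mathcal{O}(1/N)$ by invoking Theorem \ref{thm:decaycorrelations}, which is itself proved in Appendix \ref{Appendix:CorrDecay} by verifying that the chain defining $\mu^N$ is uniformly elliptic and then appealing to an abstract mixing estimate for inhomogeneous Markov chains from the cited reference. You instead derive the required mixing bound directly from the two-state transition structure \eqref{eq:transitionsmuN}: the one-step recursion $q_{z+1}^{(1)}-q_{z+1}^{(0)}=-(1-a_{z+1})\big(q_z^{(1)}-q_z^{(0)}\big)$ gives exponential coupling at rate $c=1-(\alpha\wedge\beta)<1$, and the Markov property propagates this from the single site $r'$ to the full cylinder event $B$. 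This is more elementary and entirely self-contained, whereas the paper's route is less hands-on but would generalize to larger state spaces or less explicit kernels. The only place to be slightly careful is the gap estimate $r'-r$ in the boundary cases of \eqref{def:lambdaxell}: when $x\le\ell$ or $y\ge N-\ell$ the boxes are shifted, so the offset is $\mathcal{O}(\ell)$ rather than exactly $2\ell$; this changes nothing since $c^{(\log N)^2-\mathcal{O}(\ell)}$ remains super-polynomially small for fixed $\ell$. The handling of the second term via $|pq-p'q'|\le|p-p'|+|q-q'|$ for $p,q,p',q'\in[0,1]$ together with Proposition \ref{prop:localequilibrium} matches the paper.
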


\begin{proof}
	It is clear that $(a_x)_{x\in\Lambda_N}$ is bounded away from 0 since $\alpha ,\beta >0$. The proof of this corollary is then straightforward thanks to the decorrelation estimate given in Theorem \ref{thm:decaycorrelations}, which yields, since $y-x >\log N$ by \eqref{eq:bounduv}, that
	\begin{equation}
		|\mu^N(\eta_{|\Lambda_x^\ell}=\sigma, \; \eta_{|\Lambda_y^\ell}=\sigma')-\mu^N(\eta_{|\Lambda_x^\ell}=\sigma)\mu^N(\eta_{|\Lambda_y^\ell}=\sigma')|\leq \frac{C}{N}.
	\end{equation}
	for some constant $C>0$. Then, a simple application of Proposition \ref{prop:localequilibrium} yields the result.
\end{proof}

	\begin{lemma} \label{lem:quasi-reversibility} The measure $\mu^N$ satisfies the following ``quasi-reversibility'' relation in the bulk. For any $x\in\integers{1}{N-2}$ we define	\begin{equation}\label{def:OmegaNx}
			\Omega_N^x :=\{ \eta\in\mathcal{E}_N\; :\; c_{x,x+1}(\eta ) > 0\}.
		\end{equation}
	 Then, there exists a constant $C_3=C_3(\alpha,\beta)>0$ such that, for any $x\in\integers{1}{N-2}$ and any $\eta\in\Omega_N^x$,
		\begin{equation}\label{eq:quasi-reversibility}
			\bigg|1-\frac{c_{x,x+1}(\eta^{x,x+1})}{c_{x,x+1}(\eta )}\frac{\mu^N(\eta^{x,x+1})}{\mu^N(\eta )}\bigg| \leqslant \frac{C_3}N.
		\end{equation}
		Moreover, the measure $\mu^N$ is reversible with respect to the boundary dynamics.
	\end{lemma}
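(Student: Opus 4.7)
The proof proceeds by direct computation from the explicit product formula \eqref{eq:muN_explicit}. The key observation is that for any $\eta\in\Omega_N^x$, ergodicity together with the constraint $c_{x,x+1}(\eta)>0$ forces the local configuration $(\eta_{x-1},\eta_x,\eta_{x+1},\eta_{x+2})$ (using the conventions $\eta_0=\alpha$, $\eta_N=\beta$ where relevant) to be either $\bullets\bulletx\circs\bullets$ or $\bullets\circx\bullets\bullets$, and the transposition $\eta\mapsto\eta^{x,x+1}$ swaps these two forms. For an interior $x\in\integers{2}{N-3}$, both forms give rate $c_{x,x+1}=1$, so the rate ratio is exactly $1$; the only factors in \eqref{eq:muN_explicit} that change under the transposition are those of index $y=x,x+1,x+2$, and a short calculation gives, in the case $\bullets\bulletx\circs\bullets$,
\[
\frac{\mu^N(\eta^{x,x+1})}{\mu^N(\eta)} = \frac{(1-a_x)\,a_{x+2}}{a_x\,(1-a_{x+1})},
\]
with the opposite case yielding the reciprocal. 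The proof then reduces to the algebraic identity
\[
(1-a_x)a_{x+2}-a_x(1-a_{x+1}) \,=\, (a_{x+2}-a_x)-a_x(a_{x+2}-a_{x+1}) \,=\, (2-a_x)\varepsilon_N,
\]
which is $\mathcal{O}(1/N)$; since $a_y\in[\alpha\wedge\beta,\alpha\vee\beta]\subset(0,1)$, the denominator $a_x(1-a_{x+1})$ is uniformly bounded below by a constant depending only on $\alpha,\beta$, giving \eqref{eq:quasi-reversibility} with an explicit $C_3=C_3(\alpha,\beta)$.

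The two edge cases $x=1$ and $x=N-2$ require extra care: the rate ratio is no longer $1$, since $c_{x,x+1}$ involves the reservoir value $\alpha$ or $\beta$ rather than a genuine coordinate of $\eta$. The resulting asymmetry is however exactly compensated by a boundary factor of $\mu^N$: for $x=1$ by the initial Bernoulli factor $\bar\rho(\alpha)=1/(2-a_1)$, and for $x=N-2$ by the endpoint identity $a_{N-1}=\beta$. The same telescoping as above then delivers the $\mathcal{O}(1/N)$ bound. For the boundary reversibility, the admissible exchanges under $\mathcal{L}_r$ occur between configurations with $\eta_{N-2}=1$ and $\eta_{N-1}\in\{0,1\}$, and only the factor of index $y=N-1$ in \eqref{eq:muN_explicit} changes; it equals $1-\beta$ or $\beta$. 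Combined with the rate values $b_r(\eta)=\beta$ and $b_r(\eta^{N-1})=1-\beta$, this yields the detailed balance identity $\mu^N(\eta)b_r(\eta)=\mu^N(\eta^{N-1})b_r(\eta^{N-1})$. The analogous identity for $\mathcal{L}_\ell$ follows by a symmetric computation, leveraging $a_1=\alpha$ and the choice of initial Bernoulli parameter $\bar\rho(\alpha)$.

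The hard part is not any deep argument but the precise bookkeeping of which factors in \eqref{eq:muN_explicit} and which reservoir values enter each case, especially at the two bulk edges $x=1$ and $x=N-2$ where the boundary convention mixes with the Markov transition structure. The telescoping identity reveals that the entire bulk error comes from the discrete increment $\varepsilon_N=(\beta-\alpha)/(N-2)$ of the affine field $(a_x)$: in particular, exact bulk reversibility holds only in the equilibrium case $\alpha=\beta$, for which $(a_x)$ is constant and $\mu^N$ coincides with the grand-canonical state $\mu_\alpha^N$ from the previous section.
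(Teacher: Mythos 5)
Your proposal is correct and takes essentially the same route as the paper: identify $\Omega_N^x$ as the union of the two local patterns $\bullets\bulletx\circs\bullets$ and $\bullets\circx\bullets\bullets$, compute the rate ratio and the ratio $\mu^N(\eta^{x,x+1})/\mu^N(\eta)$ explicitly from the product formula \eqref{eq:muN_explicit}, and treat $x=1$, $x\in\integers{2}{N-3}$, $x=N-2$ as separate cases. The only genuine addition is the explicit telescoping identity $(1-a_x)a_{x+2}-a_x(1-a_{x+1})=(2-a_x)\varepsilon_N$, which packages the $\mathcal{O}(1/N)$ estimate slightly more transparently than the paper's appeal to the expansion in \eqref{eq:quotient}, and the closing remark that exact bulk reversibility holds iff $\alpha=\beta$; both are correct and consistent with the paper's computation.
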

\begin{proof}Denote by $\{\bullets\bulletx \circs\bullets\}$ the event $ \{\eta\in \mathcal{E}_N,\;(\eta_{x-1}, \eta_x, \eta_{x+1}, \eta_{x+2})=(1,1,0,1)\}$, and   similarly $\{\bullets\circx \bullets\bullets\}$ the event $ \{\eta\in \mathcal{E}_N,\;(\eta_{x-1}, \eta_x, \eta_{x+1}, \eta_{x+2})=(1,0,1,1)\}$.  For simplicity, we abuse a little bit our notation and use it also when $x$ is either $1$ or $N-2$, in which case the first/last particle should not be present but rather represents the reservoir. Then, one easily checks that  \[\Omega_N^x=\{\bullets\bulletx \circs\bullets\}\cup \{\bullets\circx \bullets\bullets\}, \qquad \text{and} \qquad \{\bullets\bulletx \circs\bullets\}=\{\eta^{x,x+1},\; \eta\in \{\bullets\circx \bullets\bullets\} \}.\] 
	Let us prove \eqref{eq:quasi-reversibility} in the case where $\eta \in \{\bullets\bulletx \circs\bullets\}$ (the second case where $\eta \in \{\bullets\circx \bullets\bullets \}$ is similar). One can check that, in this case: 
	\begin{equation}\label{eq:cxx} c_{x,x+1}(\eta) = \begin{cases} \alpha & \text{ if } x=1 \\ 1 & \text{ if } x \in \integers{2}{N-2} \end{cases} \qquad c_{x,x+1}(\eta^{x,x+1}) = \begin{cases}
		1 & \text{ if } x \in \integers{1}{N-3} \\ \beta & \text{ if } x =N-2,
	\end{cases}  \end{equation} and besides, recalling that $\bar\rho(\alpha)=(2-\alpha)^{-1}$, the expressions of $a_x$ given in \eqref{def:ax} and the Markov construction, we have
\begin{equation}\label{eq:quotient} \frac{\mu^N(\eta^{x,x+1})}{\mu^N(\eta)} = \begin{cases} \displaystyle \vphantom{\Bigg\{} \frac{1-\bar\rho(\alpha)}{\bar\rho(\alpha)}\frac{a_3}{1-a_2} = \alpha + \mathcal{O}_{\alpha,\beta}\Big(\frac1N\Big) & \text{ if } x=1 \\ \displaystyle \frac{(1-a_x)a_{x+2}}{a_x(1-a_{x+1})} = 1 + \mathcal{O}_{\alpha,\beta}\Big(\frac1N\Big) & \text{ if } x \in \integers{2}{N-3}\vphantom{\Bigg\{} \\ \displaystyle \frac{1-a_{N-2}}{a_{N-2}(1-a_{N-1})} = \frac1\beta  + \mathcal{O}_{\alpha,\beta}\Big(\frac1N\Big) \vphantom{\Bigg(} & \text{ if } x= N-2, \end{cases} \end{equation} where we denote by $\mathcal{O}_{\alpha,\beta}(\varepsilon_N)$ a quantity which is bounded by $C\varepsilon_N$ with $C>0$ depending only on $\alpha,\beta$. We easily deduce the claim \eqref{eq:quasi-reversibility} from \eqref{eq:cxx} and \eqref{eq:quotient}.

The reversibility at the boundaries is left to the reader.
\end{proof}

We finally show that our reference measure is regular enough in an entropic sense.
Given two probability measures $\nu$, $\mu$ on $\Omega_N,$ define the relative entropy
\begin{equation}
	H(\nu |\mu) = \sum_{\eta\in\Omega_N} \nu (\eta )\log\left(\frac{\nu (\eta )}{\mu(\eta )}\right)=\E_{\mu}\left[\frac{\diff\nu}{\diff\mu}\log \frac{\diff\nu}{\diff\mu}\right].
\end{equation}
We now give a crude entropy bound with respect to our reference measure $\mu^N$.
\begin{lemma}\label{lem:boundentropy}
There exists a constant $C_4=C_4(\alpha ,\beta )>0$ such that for any probability measure $\nu$ which is concentrated on the ergodic component $\mathcal{E}_N$,  we have
\[H(\nu |\mu^N)\le C_4N.\]
\end{lemma}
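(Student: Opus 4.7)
The strategy is to use the elementary bound
\[
H(\nu\,|\,\mu^N)=\sum_{\eta\in\Omega_N}\nu(\eta)\log\nu(\eta)-\sum_{\eta\in\Omega_N}\nu(\eta)\log\mu^N(\eta)\le -\log\Big(\min_{\eta\in\mathcal{E}_N}\mu^N(\eta)\Big),
\]
valid since $\nu$ is supported on $\mathcal{E}_N$ (on which $\mu^N>0$) and the first sum is nonpositive. Thus it suffices to check that
\[
\min_{\eta\in\mathcal{E}_N}\mu^N(\eta)\ge e^{-C_4 N}
\]
for some constant $C_4=C_4(\alpha,\beta)>0$.

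To see this, recall the explicit formula \eqref{eq:muN_explicit}. For any $\eta\in\mathcal{E}_N$, each of the $N-2$ factors in the product equals $a_x$, $1-a_x$, or $1$, depending on whether the local pattern $(\eta_{x-1},\eta_x)$ is $(1,1)$, $(1,0)$, or $(0,1)$; the pattern $(0,0)$ is forbidden by ergodicity. By definition of $a_x$ in \eqref{def:ax}, we have $a_x,\,1-a_x\in[(\alpha\wedge\beta)\wedge(1-\alpha\vee\beta),\,1]$ for every $x$, so both are bounded below by a constant $c_0=c_0(\alpha,\beta)>0$. The prefactor $\bar\rho(\alpha)^{\eta_1}(1-\bar\rho(\alpha))^{1-\eta_1}$ is also bounded below by a positive constant depending only on $\alpha$. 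Consequently
\[
\mu^N(\eta)\ge c_0^{N-1}\cdot\min(\bar\rho(\alpha),1-\bar\rho(\alpha))
\]
for every $\eta\in\mathcal{E}_N$, which gives the desired entropy bound with $C_4=\log(1/c_0)+\mathcal{O}(1/N)$.

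There is no real obstacle here: the only thing to check is that the Markovian construction of $\mu^N$ never produces transition probabilities that are too close to $0$ on the ergodic component, which is immediate from the fact that $a_x$ stays in a compact subinterval of $(0,1)$ determined by $\alpha$ and $\beta$.
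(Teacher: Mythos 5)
Your proof is correct and follows essentially the same route as the paper: both reduce the entropy bound to a lower bound on $\min_{\eta\in\mathcal{E}_N}\mu^N(\eta)$ via the explicit formula \eqref{eq:muN_explicit}, using that the transition parameters $a_x$ and $1-a_x$ stay uniformly bounded away from $0$ on $\mathcal{E}_N$. The only cosmetic difference is that you spell out the three possible local patterns $(1,1)$, $(1,0)$, $(0,1)$ and start from the identity $H(\nu|\mu^N)\le -\log\min\mu^N$, whereas the paper bounds $\log(\mathrm{d}\nu/\mathrm{d}\mu^N)$ directly using $\nu(\eta)\le 1$; these are equivalent. (Minor nit: the exponent should be $N-2$ rather than $N-1$ since the product in \eqref{eq:muN_explicit} has $N-2$ factors, but your bound is still a valid lower bound.)
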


\begin{proof}
	We obviously have that
	\begin{equation}
		\label{ax_boundedaway}
		\forall x\in\Lambda_N,\qquad m\le a_x\le M
	\end{equation}
	with $m=\alpha\wedge\beta$ and $M=\alpha\vee\beta$. Therefore, in formula \eqref{eq:muN_explicit}, we see that for any ergodic configuration $\eta\in\mathcal{E}_N$,
	\[ \mu^N(\eta )\ge \big( 1-\bar\rho (\alpha)\big)\times \big( m\wedge (1-M)\big)^{N-2}.\]
	In particular, as $\nu (\eta )\le 1$, we have $\log (\frac{\diff\nu}{\diff\mu^N})\le C_4(N-2)$ for some constant $C_4$ depending only on $\alpha$ and $\beta$. Injecting this in the definition of the relative entropy, we get the result.
\end{proof}

\subsection{Dirichlet estimate}

Fix a function $f:\Omega_N \rightarrow \R$, define the Dirichlet form with respect to $\mu^N$ as
\begin{equation}\label{def: DN}
\frak{D}_N (f) = \underbrace{\sum_{x=1}^{N-2}\frak{D}_0^x(f)}_{\displaystyle =:\frak{D}_0(f)} + \frac{\kappa}{N^\theta}\big(\frak{D}_\ell (f) +\frak{D}_r(f)\big)
\end{equation}
where
\begin{subequations}\label{def: Dirichlet forms}
\begin{align}
 \frak{D}_0^x(f) &= \int_{\Omega_N} c_{x,x+1}(\eta )\big[\sqrt{f(\eta^{x,x+1})}-\sqrt{f(\eta )}\big]^2\diff\mu^N(\eta ) \label{def: D0x}\\
\frak{D}_\ell (f) &= \int_{\Omega_N} b_\ell (\eta )\big[ \sqrt{f(\eta^1)}-\sqrt{f(\eta )}\big]^2\diff\mu^N (\eta )\label{def: D ell}\\
\frak{D}_r(f) &= \int_{\Omega_N}b_r(\eta ) \big[\sqrt{f(\eta^{N-1})}-\sqrt{f(\eta )}\big]^2\diff\mu^N (\eta ) \label{def: Dr}
\end{align}
\end{subequations}
and the  bulk and boundary rates have been defined in \eqref{eq:bulkjumprate} and  \eqref{def:boundaryrates}. Thanks to the technical estimates obtained in the previous section, we are in a position to estimate the spectral radius of the generator $\mathcal{L}_N$ with the Dirichlet form $\frak{D}_N$.

\begin{proposition}\label{prop: estimate Dirichlet form}
	There exists a constant $C_5=C_5(\alpha ,\beta )>0$ such that for any probability density function $f:\Omega_N \longrightarrow [0,+\infty ]$ with respect to the measure $\mu^N$,  we have
	\begin{equation}\label{estimate Dirichlet}
		\mu^N (\sqrt{f}\mathcal{L}_N\sqrt{f}) \le - \frac{1}{4}\frak{D}_N(f) +\frac{C_5}{N}.
	\end{equation}
\end{proposition}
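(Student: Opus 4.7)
The idea is to split $\mathcal{L}_N = \mathcal{L}_0 + \frac{\kappa}{N^\theta}(\mathcal{L}_\ell + \mathcal{L}_r)$ and treat bulk and boundary contributions separately. For the two boundary generators, Lemma \ref{lem:quasi-reversibility} tells us that $\mu^N$ is \emph{genuinely} reversible, so the standard computation gives the exact identities $\mu^N\big(\sqrt f\,\mathcal{L}_\ell\sqrt f\,\big) = -\tfrac12\mathfrak{D}_\ell(f)$ and $\mu^N\big(\sqrt f\,\mathcal{L}_r\sqrt f\,\big) = -\tfrac12\mathfrak{D}_r(f)$, which is already better than what we need. The whole difficulty is therefore concentrated in the bulk term $\mu^N\big(\sqrt f\,\mathcal{L}_0\sqrt f\,\big)$, which must be handled via the \emph{quasi}-reversibility estimate \eqref{eq:quasi-reversibility}.

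For each edge $\{x,x+1\}$, I would apply the algebraic identity $2a(b-a) = -(b-a)^2 + (b^2 - a^2)$ with $a=\sqrt{f(\eta)}$ and $b=\sqrt{f(\eta^{x,x+1})}$ to write
\[
\mu^N\big(\sqrt f\,\mathcal{L}_0^x\sqrt f\,\big) = -\tfrac12 \mathfrak{D}_0^x(f) + \tfrac12 E_x(f),\qquad E_x(f) := \sum_\eta \mu^N(\eta)c_{x,x+1}(\eta)\big(f(\eta^{x,x+1}) - f(\eta)\big).
\]
Symmetrizing $E_x(f)$ via the involution $\eta \leftrightarrow \eta^{x,x+1}$ (which sends $\Omega_N^x$ to itself, while any boundary configuration contributes zero) yields
\[
E_x(f) = \tfrac12 \sum_\eta \big[\mu^N(\eta) c_{x,x+1}(\eta) - \mu^N(\eta^{x,x+1})c_{x,x+1}(\eta^{x,x+1})\big]\big(f(\eta^{x,x+1}) - f(\eta)\big).
\]
On $\Omega_N^x$ the bracket is bounded by $\frac{C_3}{N}\mu^N(\eta)c_{x,x+1}(\eta)$ by \eqref{eq:quasi-reversibility}, elsewhere it vanishes.

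The crucial step is then to factor $f(\eta^{x,x+1}) - f(\eta) = (\sqrt{f(\eta^{x,x+1})}-\sqrt{f(\eta)})(\sqrt{f(\eta^{x,x+1})}+\sqrt{f(\eta)})$ and apply Cauchy--Schwarz, which produces on one side $\sqrt{\mathfrak{D}_0^x(f)}$ and on the other side a factor that is uniformly bounded, since $(\sqrt a+\sqrt b)^2 \le 2(a+b)$ and since $\sum_\eta \mu^N(\eta) c_{x,x+1}(\eta) f(\eta^{x,x+1}) \le C$ thanks to the change of variable $\eta\mapsto \eta^{x,x+1}$ combined once more with quasi-reversibility. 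This gives
\[
|E_x(f)| \le \frac{C}{N}\sqrt{\mathfrak{D}_0^x(f)}.
\]
Using Young's inequality $\frac{C}{N}\sqrt{\mathfrak{D}_0^x(f)} \le \tfrac12 \mathfrak{D}_0^x(f) + \frac{C'}{N^2}$, the error is absorbed into a fraction of the local Dirichlet form, leaving $\mu^N\big(\sqrt f\,\mathcal{L}_0^x\sqrt f\,\big) \le -\tfrac14 \mathfrak{D}_0^x(f) + C'/N^2$.

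Summing this $x$-wise bound over the $N-2$ edges, the quadratic gain in the Young step is essential: it converts the pointwise $\mathcal{O}(1/N^2)$ remainder into a total bulk error of order $(N-2)/N^2 = \mathcal{O}(1/N)$, which is the announced size. Combining with the exact identities for $\mathcal{L}_\ell, \mathcal{L}_r$ and downgrading the boundary factor $-\tfrac12$ to $-\tfrac14$ (harmless since both are negative) gives the advertised bound $-\tfrac14\mathfrak{D}_N(f) + C_5/N$. The main obstacle is precisely this bookkeeping: a naive bound on $E_x(f)$ of the form $\frac{C_3}{N}\sum \mu^N c f$ accumulates to $\mathcal{O}(1)$ after summation in $x$, and one genuinely needs the Cauchy--Schwarz/Young mechanism above, which exploits the gradient structure of the Dirichlet form, to get the sharp $\mathcal{O}(1/N)$ remainder.
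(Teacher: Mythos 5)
Your proof is correct, and the overall structure (split $\mathcal{L}_N$ into bulk and boundary parts, exact reversibility kills the boundary error, quasi-reversibility controls the bulk) is the same as the paper's. The difference is in how you obtain the per-edge $\mathcal{O}(1/N^2)$ remainder for the bulk term: the paper applies a packaged estimate (Lemma \ref{lemma: from BGJO}, from \cite{bernardin20192low}) whose error term already involves the \emph{square} of the quasi-reversibility defect, while you re-derive the same quadratic gain by hand through the symmetrization of $E_x(f)$, Cauchy--Schwarz against $\mathfrak{D}_0^x(f)$, and Young's inequality. These are the same mechanism, and if you unpack Lemma \ref{lemma: from BGJO} it is essentially your Cauchy--Schwarz/Young step in disguise; your write-up has the pedagogical advantage of making explicit \emph{why} the naive linear bound on $E_x(f)$ fails after summation, which the paper leaves implicit. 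Your boundary identity $\mu^N(\sqrt f\,\mathcal{L}_\ell\sqrt f)=-\tfrac12\mathfrak{D}_\ell(f)$ is slightly finer than the $-\tfrac14\mathfrak{D}_\ell(f)$ the paper records (it just specializes the BGJO estimate rather than using exact detailed balance), but both suffice. One small bookkeeping remark: your uniform bound on $\sum_\eta\mu^N(\eta)c_{x,x+1}(\eta)f(\eta^{x,x+1})$ implicitly uses that $\Omega_N^x$ is invariant under $\eta\mapsto\eta^{x,x+1}$ so that the change of variables stays inside the set where $c_{x,x+1}\neq 0$; the paper invokes \eqref{eq:quotient} for the same purpose, so this is fine but worth stating explicitly.
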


To prove this estimate,  we will use repeatedly the following classical estimate, whose proof can be found in \cite[Lemma 5.1]{bernardin20192low}.
\begin{lemma}\label{lemma: from BGJO}
Let $T:\eta\mapsto T_\eta\in \Omega_N$ be a configuration transformation,  and let $c:\Omega_N\longrightarrow [0,+\infty [$ be a non-negative local function.  Let $f$ be a density with respect to a probability measure $\mu$.  Then,  we have that
\begin{multline}
\label{error term BGJO}
\int_{\Omega_N} c(\eta )\big[ \sqrt{f(T_\eta )}-\sqrt{f(\eta )}\big]\sqrt{f(\eta )}\diff\mu (\eta )\\ \le -\frac{1}{4}\int_{\Omega_N} c(\eta ) \big[ \sqrt{f(T_\eta )}-\sqrt{f(\eta )}\big]^2\diff\mu (\eta )+\frac{1}{8}\int_{\Omega^{\mu c}_N} c(\eta )\left( 1-\frac{c(T_\eta )}{c(\eta )}\frac{\mu (T_\eta )}{\mu (\eta )}\right)^2[ f(T_\eta )+f(\eta )]\diff\mu (\eta ),
\end{multline}
where we defined $\Omega^{\mu c}_N:=\{\eta\in\Omega_N : \;\mu(\eta) c(\eta)>0\}$.
\end{lemma}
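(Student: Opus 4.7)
The plan is to split the generator as $\gene_N=\gene_0+\frac{\kappa}{N^\theta}(\gene_\ell+\gene_r)$ and to bound the three terms $\mu^N(\sqrt f\,\gene_0\sqrt f)$, $\mu^N(\sqrt f\,\gene_\ell\sqrt f)$ and $\mu^N(\sqrt f\,\gene_r\sqrt f)$ separately. The bulk term is the only one for which $\mu^N$ fails to be reversible, so it is the only one producing an error, and quantifying this defect is the core of the argument. The boundary terms are handled by straightforward reversibility.

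For the bulk, I would write, for each edge $\{x,x+1\}$ with $x\in\integers{1}{N-2}$,
\begin{equation*}
\int_{\Omega_N} c_{x,x+1}(\eta)\big[\sqrt{f(\eta^{x,x+1})}-\sqrt{f(\eta)}\big]\sqrt{f(\eta)}\,\diff\mu^N(\eta),
\end{equation*}
and apply Lemma \ref{lemma: from BGJO} with $T_\eta=\eta^{x,x+1}$ and $c=c_{x,x+1}$. The first term on the right-hand side of \eqref{error term BGJO} is exactly $-\tfrac14\frak D_0^x(f)$, while the set $\Omega_N^{\mu^N c_{x,x+1}}$ coincides with $\Omega_N^x$ defined in \eqref{def:OmegaNx}. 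On $\Omega_N^x$, the quasi-reversibility estimate \eqref{eq:quasi-reversibility} gives $\big(1-\tfrac{c_{x,x+1}(\eta^{x,x+1})}{c_{x,x+1}(\eta)}\tfrac{\mu^N(\eta^{x,x+1})}{\mu^N(\eta)}\big)^2\le C_3^2/N^2$, so the error term is bounded by
\begin{equation*}
\frac{C_3^2}{8N^2}\int_{\Omega_N^x} c_{x,x+1}(\eta)\big[f(\eta^{x,x+1})+f(\eta)\big]\diff\mu^N(\eta).
\end{equation*}
Since $f$ is a density and $c_{x,x+1}\le 1$, the integral of $c_{x,x+1}(\eta)f(\eta)$ against $\mu^N$ is at most $1$; the term with $f(\eta^{x,x+1})$ is handled by the change of variables $\eta\mapsto\eta^{x,x+1}$, which produces a Jacobian $\mu^N(\eta^{x,x+1})/\mu^N(\eta)$, bounded using once more the quasi-reversibility \eqref{eq:quasi-reversibility} by $(1+C_3/N)c_{x,x+1}(\eta)/c_{x,x+1}(\eta^{x,x+1})$. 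Hence the error on each edge is $\mathcal{O}(1/N^2)$ uniformly in $f$, and summing over the $N-2$ bulk edges yields
\begin{equation*}
\mu^N(\sqrt f\,\gene_0\sqrt f)\le -\tfrac14\frak D_0(f)+\tfrac{C}{N}
\end{equation*}
for some $C=C(\alpha,\beta)>0$.

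For each boundary term, Lemma \ref{lem:quasi-reversibility} ensures that $\mu^N$ is \emph{exactly} reversible with respect to $\gene_\ell$ and $\gene_r$. The standard integration-by-parts identity then yields
\begin{equation*}
\mu^N(\sqrt f\,\gene_\ell\sqrt f)=-\tfrac12\frak D_\ell(f)\le -\tfrac14\frak D_\ell(f),
\end{equation*}
and similarly $\mu^N(\sqrt f\,\gene_r\sqrt f)\le -\tfrac14\frak D_r(f)$, with no error produced. Adding the three contributions and multiplying the boundary ones by $\kappa/N^\theta$ gives the desired bound \eqref{estimate Dirichlet}. The only delicate point is the control of the $f(\eta^{x,x+1})$ term in the BGJO error after the change of variables, for which the quasi-reversibility \eqref{eq:quasi-reversibility} is precisely what is needed; everything else is bookkeeping.
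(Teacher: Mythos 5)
Your proposal does not prove the statement it was supposed to prove. The statement is Lemma \ref{lemma: from BGJO} itself: an abstract inequality valid for an arbitrary transformation $T$, an arbitrary non-negative rate $c$, an arbitrary probability measure $\mu$ and any density $f$, with the error quantified by the factor $\bigl(1-\tfrac{c(T_\eta)}{c(\eta)}\tfrac{\mu(T_\eta)}{\mu(\eta)}\bigr)^2$. What you wrote instead is (essentially correctly) the proof of Proposition \ref{prop: estimate Dirichlet form}, i.e.\ the application of that lemma to the FEP generator: you split $\gene_N$ into bulk and boundary parts, invoke Lemma \ref{lemma: from BGJO} on each edge, and then use the quasi-reversibility estimate \eqref{eq:quasi-reversibility}. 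This is circular with respect to the task: you use the lemma as a black box rather than establish it, so the actual content of the statement — why the left-hand side of \eqref{error term BGJO} is controlled by $-\tfrac14$ of the quadratic term plus $\tfrac18$ of the ``asymmetry'' term — is nowhere addressed. (In the paper this lemma is not proved either; it is quoted from \cite[Lemma 5.1]{bernardin20192low}.)

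A proof of the actual statement uses none of the FEP structure (no generator decomposition, no $\mu^N$, no quasi-reversibility). One starts from the elementary identity
\begin{equation*}
\big[\sqrt{f(T_\eta)}-\sqrt{f(\eta)}\big]\sqrt{f(\eta)}
= \tfrac12\big[f(T_\eta)-f(\eta)\big]-\tfrac12\big[\sqrt{f(T_\eta)}-\sqrt{f(\eta)}\big]^2 ,
\end{equation*}
which already produces (half of) the negative quadratic term. The remaining term $\tfrac12\int c(\eta)[f(T_\eta)-f(\eta)]\diff\mu$ is then symmetrized by the change of variables $\eta\mapsto T_\eta$, which makes the weight $c(T_\eta)\mu(T_\eta)/\bigl(c(\eta)\mu(\eta)\bigr)$ appear and reduces it to an integral of $c(\eta)\bigl(1-\tfrac{c(T_\eta)\mu(T_\eta)}{c(\eta)\mu(\eta)}\bigr)\bigl[\sqrt{f(T_\eta)}-\sqrt{f(\eta)}\bigr]\bigl[\sqrt{f(T_\eta)}+\sqrt{f(\eta)}\bigr]$ over $\Omega_N^{\mu c}$; a weighted Young inequality together with $\bigl[\sqrt{f(T_\eta)}+\sqrt{f(\eta)}\bigr]^2\le 2\,[f(T_\eta)+f(\eta)]$ then absorbs part of this into the quadratic term and yields exactly the constants $\tfrac14$ and $\tfrac18$ in \eqref{error term BGJO}. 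This symmetrization-plus-Young step is the missing idea in your proposal; without it you have not proved the lemma, only restated how it is used downstream.
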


\begin{proof}[Proof of Proposition \ref{prop: estimate Dirichlet form}]
	The quantity in the left hand side of \eqref{estimate Dirichlet} is a sum of three terms,  each one coming from one of the generators $\mathcal{L}_0$,  $\mathcal{L}_\ell $ and $\mathcal{L}_r$.  Let us treat each of these terms separately,  beginning with the one coming from $\mathcal{L}_0$.  By definition,  it reads
	\[\mu^N(\sqrt{f}\mathcal{L}_0\sqrt{f}) = \sum_{x=1}^{N-2} \int_{\Omega^x_N}c_{x,x+1}(\eta )\big[ \sqrt{f(\eta^{x,x+1})}-\sqrt{f(\eta )}\big]\sqrt{f(\eta )}\diff\mu^N(\eta )\]
	where $\Omega_N^x$ has been defined in \eqref{def:OmegaNx}.
	We now apply Lemma \ref{lemma: from BGJO} to each of these integrals to get
	\begin{multline}
		\label{eq:boundDir1}
		\mu^N (\sqrt{f}\mathcal{L}_0\sqrt{f}) \le -\frac14\mathfrak{D}_0(f) \\
        + \frac18 \sum_{x=1}^{N-2}\int_{\Omega_N^x}c_{x,x+1}(\eta )\left( 1-\frac{c_{x,x+1}(\eta^{x,x+1})}{c_{x,x+1}(\eta )}\frac{\mu^N(\eta^{x,x+1})}{\mu^N (\eta)}\right)^2\big[ f(\eta^{x,x+1})+f(\eta )\big]\diff\mu^N(\eta).
	\end{multline}
%
	Now, in \eqref{eq:boundDir1} we directly use the quasi-reversibility relation \eqref{eq:quasi-reversibility} proved in Lemma \ref{lem:quasi-reversibility}, and the fact that $c_{x,x+1}(\eta) \leqslant 1$.
	This allows us to obtain from \eqref{eq:boundDir1} the bound
	\begin{equation}
		\label{eq:boundDir1computed}
		\mu^N(\sqrt{f}\mathcal{L}_0\sqrt{f})\le -\frac14\mathfrak{D}_0(f)
		+\frac{C_3^2}{N^2}\sum_{x=1}^{N-2}\int_{\Omega_N^x}\big[ f(\eta^{x,x+1})+f(\eta )\big]\diff\mu^N(\eta).
	\end{equation}
	Since $f$ is a density with respect to $\mu^N$, and since $\mu^N(\eta^{x,x+1})/\mu^N(\eta )$ is uniformly bounded in $x,N$ (see \eqref{eq:quotient}), the integral $\int_{\Omega_N^x}f(\eta^{x,x+1})\diff\mu^N(\eta )$ is bounded uniformly in $x$ as well. This yields as wanted that for some constant $C_5>0$ depending only on $\alpha$ and $\beta$, we have
	\begin{equation}
		\label{eq:boundDir2}
		\mu^N(\sqrt{f}\mathcal{L}_0\sqrt{f}) \le -\frac{1}{4}\frak{D}_0(f) +\frac{C_5}{N}.
	\end{equation}
	Let us now deal with the term coming from the generator $\mathcal{L}_\ell$ of the left boundary. A similar application of Lemma \ref{lemma: from BGJO} yields that 
	\begin{equation}
		\label{eq:bounddirLl}
		\mu^N(\sqrt{f}\mc{L}_\ell\sqrt{f})\le -\frac{1}{4}\mathfrak{D}_\ell (f)+\frac{1}{8}\int_{\Omega_N^0} b_\ell (\eta)\left( 1-\frac{b_\ell (\eta^1)}{b_\ell (\eta)}\frac{\mu^N(\eta^1)}{\mu^N(\eta)}\right)^2\big[ f(\eta^1)+f(\eta )\big]\diff\mu^N(\eta )
	\end{equation}
	where $\Omega_N^0:= \Omega_N^{\mu^Nb_\ell}$. In $\Omega_N^0$,  there are only two possible configurations on $\lbrace 1,2\rbrace$,  namely $\bulletone\bullets$ and $\circone\bullets$ and we go from one to the other by the transformation $\eta\longmapsto\eta^1$. But note that
	\[ b_\ell (\circone\bullets)\mu^N(\circone\bullets)-b_\ell(\bulletone\bullets)\mu^N(\bulletone\bullets) = \alpha \big( 1-\bar\rho (\alpha )\big) - (1-\alpha ) \bar\rho (\alpha )a_1 =0.\]
	This equality implies that the integral in the right hand side of \eqref{eq:bounddirLl} vanishes, so we obtain
	\begin{equation}
		\label{eq:bounddirLl2}
		\mu^N(\sqrt{f}\mathcal{L}_\ell\sqrt{f})\le - \frac14\mathfrak{D}_\ell (f).
	\end{equation}
	A similar computation for the last term coming from the generator $\mathcal{L}_r$ shows that
	\begin{equation}
		\label{eq:bounddirLr}
		\mu^N(\sqrt{f}\mathcal{L}_r\sqrt{f}) \le -\frac14 \mathfrak{D}_r(f).
	\end{equation}
	Putting \eqref{eq:boundDir2}, \eqref{eq:bounddirLl2} and \eqref{eq:bounddirLr} together, we deduce the result.
\end{proof}

\section{Proof of Theorem \ref{thm: hydrodynamic limit}}
\label{sec:HDL}

\subsection{Tightness and absolute continuity}

We now have the main ingredients needed to carry on with the proof of the hydrodynamic limit. Recall that we defined  in Section \ref{sec:HDLstatement} the distribution $\Q^N=\Prob_{\nu_0^N}\circ (m^N)^{-1}$ of the boundary-driven FEP empirical measure's trajectory. The proof of the tightness of $(\Q^N)_{N\ge 1}$ is quite standard and relies on \emph{Aldous criterion} (cf. \cite[Section 4.1]{KL}) which gives a necessary and sufficient condition for a sequence of measures to be tight in the Skorokhod topology. Nevertheless, during the proof one has to distinguish between the values $\theta\ge 1$ and $\theta <1$. Both are treated similarly, with the difference that the latter requires to approximate functions by compactly supported functions in order to get rid of boundary terms that can diverge. We omit this proof and we refer the reader to \cite[Section 4]{baldasso2017exclusion}, where they treat only the case $\theta\ge 0$, but with similar arguments we can extend it to $\theta <0$. 

Since we are dealing with an exclusion process, following classical arguments (see \emph{e.g.}~\cite[page 57]{KL}), it is straightforward to show that any limit point of $(\Q^N)_{N\ge 1}$ is concentrated on trajectories $(m_t)_{t\geq 0}$ which are absolutely continuous with respect to the Lebesgue measure and write $m_t(\mathrm{d} u)=\rho_t(u)\mathrm{d} u$, where the  profile $\rho_t(\cdot )$ takes its values in $[0,1]$. We actually have a stronger result as the process we consider starts and remains in the ergodic component, so that the profile $\rho_t(\cdot )$ takes its values in $\big[\frac12,1\big]$. Indeed, if $G : [0,1]\longrightarrow \R_+$ is a non-negative $\mathcal{C}^1$ function and $\eta\in\mathcal{E}_N$, we can write
\begin{equation*}
	\frac1N\sum_{x=1}^{N-1}\eta_xG\Big(\frac xN\Big) + \frac{1}{N}\sum_{x=1}^{N-1}\eta_xG\Big(\frac{x+1}{N}\Big) =  \frac{2}{N}\sum_{x=1}^{N-1}\eta_x G\Big( \frac{x}{N}\Big) +\mathcal{O}\Big(\frac1N\Big)
\end{equation*}
on the one hand, but on the other hand this quantity is also equal to
\begin{equation*}
	\frac1N\sum_{x=2}^{N-1}(\eta_x+\eta_{x-1})G\Big(\frac xN\Big) + \mathcal{O}\Big(\frac1N\Big).
\end{equation*}
Since $\eta$ is ergodic we have $\eta_x+\eta_{x-1}\ge 1$, so we can bound below this latter sum and get
\begin{equation*}
	\frac1N\sum_{x=1}^{N-1} \eta_xG\Big(\frac xN\Big) \ge \frac{1}{2N}\sum_{x=1}^{N-1}G\Big( \frac xN\Big) +\mathcal{O}\Big(\frac1N\Big).
\end{equation*}
The sum on the right hand side of this inequality converges to $\frac12\int_0^1G(u)\diff u$ as $N$ goes to infinity, so we conclude that
\begin{equation*}
	\frac1N\sum_{x=1}^{N-1} \eta_xG\Big(\frac xN\Big) -\frac12\int_0^1G(u)\diff u + \ge \mathcal{o}(1).
\end{equation*}
Therefore, for any $t\in [0,T]$
\begin{equation*}
	1 = \mathbb{P}_{\nu_0^N}\big( \eta (t)\in\mathcal{E}_N\big) \le \mathbb{P}_{\nu_0^N}\left( \langle m_t^N,G\rangle -\frac12\int_0^1G(u)\diff u \ge \mathcal{o}(1)\right)
\end{equation*}
and the probability on the right hand side is then equal to 1. An application of the Portmanteau Theorem then yields that $\Q$-almost surely
\begin{equation*}
	\int_0^1 \left(\rho_t(u) -\frac12\right) G(u)\diff u \ge 0.
\end{equation*}
Since this holds for any non-negative function $G$, we deduce that $\rho_t$ takes values bigger than $\frac12$ almost everywhere.

The following two subsections consist in showing that there is a unique limit point to the sequence $(\Q^N)_{N\ge 1}$ by showing that the profile $\rho$ is a weak solution to the hydrodynamic equations, which is known to be unique (as proved in Appendix \ref{sec:appuniqueness}).

\subsection{Weak formulation}

In order to prove that the density $\rho$ of the limiting measure $m$ is a weak solution of the hydrodynamic equations given in Theorem \ref{thm: hydrodynamic limit}, the first step is to prove that it satisfies a weak formulation. By construction, as detailed in Section  \ref{sec:nu0N} at time $t=0$,  this density coincides with the chosen initial profile, meaning that any limit point  $\Q$ of $(\Q^N)_{N\ge 1}$ satisfies that for any $\delta >0$, and any continuous function $G: [0,1]\longrightarrow\R$,
\begin{equation}\label{eq:initialEmpMeasure}
\Q\left( \left| \langle m_0,G\rangle - \int_0^1 \rho^\mathrm{ini}(u)G(u)\diff u\right| >\delta\right)=0
\end{equation}
by \eqref{eq:initialprofile}. Fix a test function $G\in\mathcal{C}^{1,2}([0,T]\times [0,1])$, it is well-known (see \cite[Lemma 5.1,  Appendix 1.5]{KL}) that 
\begin{equation}\label{Dynkin martingale}
\bb{M}_t^N(G): = \langle m_t^N,G_t\rangle -\langle m_0^N,G_0\rangle - \int_0^t\langle m_s^N,\partial_tG_s\rangle\diff s-\int_0^tN^2\mathcal{L}_N\langle m_s^N,G_s\rangle\diff s
\end{equation}
defines a mean-zero martingale. Recall the definitions of the instantaneous currents in \eqref{def:current} and \eqref{def:currentboundaries}, of the function $h_x$ in \eqref{def:hx} and of the two gradient decompositions \eqref{eq:gradientcondition} and \eqref{eq:gradientboundaries}.
 Recall that for a function $g(\eta )$, we simply write $g(s):=g(\eta (s))$.  As a consequence,  after two successive summations by parts,  the term inside the second integral of \eqref{Dynkin martingale} writes
\begin{align*}
N^2\mathcal{L}_N\langle m_s^N,G_s\rangle  = & \; NG_s\left(\frac{1}{N}\right)j_{0,1}(s)-NG_s\left( \frac{N-1}{N}\right) j_{N-1,N}(s)\\
& +\nabla_N^+G_s(0)h_1(s)-\nabla_N^-G_s(1)h_{N-1}(s) +\frac{1}{N}\sum_{x\in\Lambda_N} \Delta_NG_s\Big( \frac{x}{N}\Big) h_x(s)
\end{align*}
where we defined the \emph{discrete gradients} by
\begin{equation}\label{eq:discrete}
	\nabla_N^+G\Big( \frac{x}{N}\Big) = N\left( G\Big( \frac{x+1}{N}\Big) - G\Big( \frac{x}{N}\Big) \right)\quad\mbox{ and }\quad \nabla_N^-G\Big( \frac{x}{N}\Big) = N\left( G_s\Big( \frac{x}{N}\Big) - G\Big( \frac{x-1}{N}\Big) \right),
\end{equation}
and the \emph{discrete Laplacian} by
\begin{equation*}
	\Delta_NG\Big( \frac{x}{N}\Big) =  N^2\left( G\Big( \frac{x+1}{N}\Big) +G\Big( \frac{x-1}{N}\Big) - 2G\Big( \frac{x}{N}\Big)\right).
\end{equation*}
The gradient condition \eqref{eq:gradientboundaries} at $x=0$ and $x=N-1$, and the conventions \eqref{def:conventions2} allow to rewrite Dynkin's martingale under the form
\begin{align}
	\label{Dynkin_computed}
	\mathbb{M}_t^N(G)  = & \; \langle m_t^N,G_t\rangle - \langle m_0^N,G_0\rangle - \int_0^t \langle m_s^N,\partial_tG_s\rangle\diff s - \int_0^t \frac1N\sum_{x\in\Lambda_N}\Delta_NG_s\Big(\frac xN\Big)h_x(s)\diff s \notag \\
&	+\int_0^t\Big\lbrace \nabla_N^-G_s(1)h_{N-1}(s)-\nabla_N^+G_s(0)h_1(s)\Big\rbrace\diff s \notag \vphantom{\sum_{x\in\Lambda_N}}\\
&	+\kappa N^{1-\theta}\int_0^t \left\lbrace G_s\left(\frac{N-1}{N}\right) \big( h_{N-1}(s)-\beta\big)-G_s\left(\frac 1N\right) \big(\alpha -h_1(s)\big)\right\rbrace\diff s. 
\end{align}
To obtain the weak formulation corresponding to the value of $\theta$, we need to replace local functions of the configuration by functions of the empirical measure. Take $\varepsilon >0$ and recall the definition of the active density $\act (\rho )$ at density $\rho$ given in \eqref{eq:act}, and of the box $\Lambda_x^\ell$ in \eqref{def:lambdaxell}. The first replacement lemma, that is true for any value of $\theta$ asserts that we can replace each $h_x(s)$ in \eqref{Dynkin_computed} by $\act (\eta_x^{\varepsilon N} (s))$, where $\eta_x^{\varepsilon N}(s)$ is the average density on the box $\Lambda_x^{\varepsilon N}$ in the configuration $\eta (s)$
\begin{equation}
	\eta_x^{\varepsilon N}(s):=\frac{1}{|\Lambda_x^{\varepsilon N}|} \sum_{y\in \Lambda_x^{\varepsilon N}}\eta_y (s).
\end{equation}
More precisely,  Lemma \ref{lemma: replacement bulk} below says that the error  we make, doing this replacement, vanishes when we let $N$ go to $+\infty$,  and then $\varepsilon$ to 0:

\begin{lemma}[Replacement lemma in the bulk]
\label{lemma: replacement bulk}
For any $t\in [0,T]$,  and for any continuous function $\varphi : [0,T]\longrightarrow\R$ we have that
\begin{equation}\label{expectation replacement bulk}
\limsup_{\varepsilon\to 0}\limsup_{N\to +\infty} \sup_{x\in\Lambda_N} \E_{\nu_0^N}\left[ \bigg| \int_0^t \varphi (s) \Big( h_x(s) - \act\big(\eta_x^{\varepsilon N}(s)\big)\Big) \diff s\bigg| \right] =0.
\end{equation}
\end{lemma}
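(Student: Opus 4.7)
The plan is to apply the Guo-Papanicolaou-Varadhan entropy method, adapted to the non-product reference measure $\mu^N$ built in Section~\ref{sec:approx}. \textbf{Step 1 (reduction to a static estimate).} Starting from the entropy inequality against $\mu^N$, combining the crude entropy bound $H(\nu_0^N|\mu^N)=O(N)$ of Lemma~\ref{lem:boundentropy}, a Feynman-Kac argument, and the Dirichlet form estimate of Proposition~\ref{prop: estimate Dirichlet form} (which precisely compensates the fact that $\mu^N$ is not truly stationary for $\mathcal{L}_N$), the lemma reduces to the static claim
\[
\limsup_{\varepsilon\to 0}\limsup_{N\to\infty}\sup_{x\in\Lambda_N}\sup_f\Big\{ \mu^N\big(f\,|h_x-\act(\eta_x^{\varepsilon N})|\big) : \mathfrak{D}_N(f)\leq C/N\Big\}=0,
\]
where the inner supremum runs over probability densities $f$ with respect to $\mu^N$.

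\textbf{Step 2 (splitting at an intermediate scale).} I introduce $1\ll\ell\ll\varepsilon N$ and decompose $h_x-\act(\eta_x^{\varepsilon N})=(A)+(B)+(C)$ with $(A)=h_x-|\Lambda_x^\ell|^{-1}\sum_{y\in\Lambda_x^\ell}h_y$, $(B)=|\Lambda_x^\ell|^{-1}\sum_{y\in\Lambda_x^\ell}h_y-\act(\eta_x^\ell)$, and $(C)=\act(\eta_x^\ell)-\act(\eta_x^{\varepsilon N})$. Piece $(A)$ is a telescoping sum of nearest-neighbour gradients $h_y-h_{y+1}$, each bounded by a local Dirichlet form contribution $\mathfrak{D}_0^y(f)$ via the quasi-reversibility Lemma~\ref{lem:quasi-reversibility}; summing yields an $O(\ell^2/N)$ error under the Dirichlet constraint. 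Piece $(C)$ is the classical two-blocks estimate: cover $\Lambda_x^{\varepsilon N}$ by adjacent $\ell$-blocks, compare consecutive block averages through their edge Dirichlet forms, and use the Lipschitz continuity of $\act$ together with Corollary~\ref{cor:2BEdec}; the error vanishes in the iterated limits $N\to\infty$, $\ell\to\infty$, $\varepsilon\to 0$.

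\textbf{Step 3 (equivalence of ensembles for piece $(B)$).} This is where the main obstacle lies. The classical argument conditions on the particle number in $\Lambda_x^\ell$, uses approximate local invariance of $f$ (via the spectral gap of the local dynamics), and reduces $(B)$ to an expectation under the canonical slice, which for product reference measures is simply uniform on a hyperplane and tractable via classical combinatorics. For $\mu^N$ this strategy fails: the local canonical marginal is itself a non-trivial FEP-Gibbs measure. The fix is to first compare $\mu^N$ on $\Lambda_x^\ell$ with $\pi_{\varrho(x/N)}$ using Proposition~\ref{prop:localequilibrium} (up to $O(1/N)$ in total variation), then exploit the Markovian representation~\eqref{transition grand-canonical} of $\pi_\rho$ together with the identity $\pi_\rho(h_y)=\act(\rho)$ (eq.~\eqref{eq:relationdensities_pirho}) to get concentration of $|\Lambda_x^\ell|^{-1}\sum h_y$ around $\act$ of the local density as $\ell\to\infty$. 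The bound $\mathfrak{D}_N(f)\leq C/N$ enters via a local spectral-gap estimate for the FEP restricted to $\Lambda_x^\ell$ (using irreducibility on the ergodic component, Section~\ref{subsec:transience}), which ensures that $f$ is approximately constant along canonical fibers.

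Combining $(A)$, $(B)$, $(C)$ and optimising over $\ell$ gives \eqref{expectation replacement bulk}. The crux of the whole argument is \emph{Step~3}: the non-product, correlated structure of $\mu^N$ forces one to bypass the classical product-type conditioning on the canonical slice, and to replace it by a comparison with the grand-canonical $\pi_\rho$ based on the Markov-chain description and the correlation decay developed in Section~\ref{sec:approx}. Those technical estimates are exactly tailored to make this substitution possible.
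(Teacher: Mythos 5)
Your proposal follows essentially the same route as the paper: entropy inequality against $\mu^N$ plus Feynman--Kac plus Proposition~\ref{prop: estimate Dirichlet form} to reduce to a static variational problem with the Dirichlet constraint $\mathfrak{D}_N(f)\lesssim 1/N$, then a three-way split into a gradient piece, a one-block estimate, and a two-blocks estimate, with Proposition~\ref{prop:localequilibrium} and Corollary~\ref{cor:2BEdec} doing the work of circumventing the non-product, non-translation-invariant structure of $\mu^N$. Your identification of Step~3 as the crux, and of the local-equilibrium comparison with $\pi_{\varrho(x/N)}$ as the substitute for the classical product-measure canonical conditioning, is exactly what the paper does (it then hands off to \cite[Lemma~7.1]{blondel2020hydrodynamic} once $\widehat\mu_x^\ell$ is replaced by $\widehat\pi_{\varrho(u)}^\ell$).

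One genuine difference is in the mechanism of your piece~$(C)$. You propose comparing \emph{consecutive} $\ell$-block averages through their edge Dirichlet forms, which is a chaining variant. The paper instead introduces a long-range exchange Dirichlet term $J_{x,y_j}$ that swaps the occupation variables at $x$ and $y_j=x+j(2\ell+1)$ directly, and controls it by a deterministic $O(|j|\ell)$-step nearest-neighbour path (Lemma~\ref{lemma:longjumppath}), summing over $j$ to pay a total cost of order $\varepsilon N\,\mathfrak{D}_N(f)$ which is $O(\varepsilon)$ under the constraint. The reason the long-jump route is chosen rather than chaining is that FEP configurations must remain in the ergodic component during the path: the explicit construction in Appendix~\ref{sec:apptransition} guarantees this, whereas a naive chaining of block averages would still require verifying that the intermediate moves are kinetically allowed. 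Your version would need that same argument anyway, so it is a variant rather than a shortcut. The remaining minor imprecisions in your write-up (the $O(\ell^2/N)$ bound in piece~$(A)$ where the paper's actual scaling is $O(\ell\gamma/N)+O(\ell/N)$, and the ordering of the iterated limits, which in the paper is $N\to\infty$, then $\varepsilon\to 0$, then $\ell\to\infty$) do not affect the outcome of the argument.
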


Define $\Sigma_N^\varepsilon = \integers{\varepsilon N+1}{(1-\varepsilon )N-1}$. Note that for any $x\in\Sigma_N^\varepsilon$, up to a factor that goes to 1 with $N$, we have $\eta_x^{\varepsilon N}(s) = m_s^N *\iota_\varepsilon (\frac xN)$ where $\iota_\varepsilon = \frac{1}{2\varepsilon}\ind_{[-\varepsilon ,\varepsilon]}$ is an approximation of the unit and $*$ denotes the usual convolution operation. Moreover, we have $\eta_1^{\varepsilon N}(s) = m_s^N *\iota_{2\varepsilon} (\frac 1N)$ and $\eta_{N-1}^{\varepsilon N}(s) = m_s^N *\iota_{2\varepsilon} (\frac{N-1}{N})$ at the boundaries. Thanks to these relations,  the lemma indeed helps us to close the expression with respect to the empirical measure. 
Lemma \ref{lemma: replacement bulk} is a stronger version of \cite[Lemma 5.4]{blondel2020hydrodynamic}, and its proof has to be carefully adapted from the latter because the addition of boundary dynamics and the nature of the FEP's stationary states breaks down some  arguments based on  translation invariance. We postpone it to Section \ref{sec: Replacement lemma bulk}, and now complete the proof of the hydrodynamic limit for the different values of $\theta$.

\subsubsection{Case $\theta <1$}

In this paragraph only, we further assume that $G\in \mathcal{C}_c^{1,2}\big([0,T]\times (0,1)\big)$. As the function $G$ is of class $\mathcal{C}^2$ with respect to the space variable, we can replace the discrete gradients and Laplacian in \eqref{Dynkin_computed} by their continuous versions up to an error that vanishes with $N$, and as $G$ is compactly supported, the last two integrals of \eqref{Dynkin_computed} vanish if $N$ is chosen large enough. Moreover, the fact that for any smooth $G$
\begin{equation}\label{eq:errorLambdaNtoSigmaNeps}
	\int_0^t\frac1N\sum_{x\in\Lambda_N\setminus\Sigma_N^\varepsilon} \Delta_NG_s\Big(\frac xN\Big)h_x(s)\diff s \underset{\varepsilon\to 0}{=} \mathcal{O}(\varepsilon ),
\end{equation}
together with Lemma \ref{lemma: replacement bulk}, yield that the Dynkin martingale rewrites
\begin{multline}
	\label{Dynkin_computed_Dirichlet}
	\mathbb{M}_t^N(G)= \langle m_t^N,G_t\rangle - \langle m_0^N,G_0\rangle - \int_0^t \langle m_s^N,\partial_tG_s\rangle\diff s \\ - \int_0^t \frac1N\sum_{x\in\Sigma_N^\varepsilon}\partial_u^2G_s\Big(\frac xN\Big) \act\Big( m_s^N*\iota_\varepsilon \Big(\frac xN\Big)\Big)\diff s +\mathcal{o}_{N,\varepsilon}(1)
\end{multline}
where $\mathcal{o}_{N,\varepsilon}(1)$ is a (random) error term that vanishes in probability as $N$ goes to $+\infty$, and $\varepsilon$ goes to 0. In Proposition \ref{prop:dynkin} of Appendix \ref{sec:appmartingale}, we prove that
\begin{equation*}
	\limsup_{N\to +\infty} \mathbb{P}_{\nu_0^N} \left( \sup_{t\in [0,T]}\big|\mathbb{M}_t^N(G)\big| >\delta \right) =0
\end{equation*}
for any $\delta >0$, so we obtain that
\begin{multline}
	\limsup_{\varepsilon\to 0}\limsup_{N\to +\infty}\Prob_{\nu_0^N}\bigg( \sup_{t\in [0,T]} \bigg| \langle m_t^N,G_t\rangle - \langle m_0^N,G_0\rangle - \int_0^t \langle m_s^N,\partial_tG_s\rangle\diff s\\ 
	- \int_0^t\frac1N\sum_{x\in\Sigma_N^\varepsilon}\partial_u^2G_s\Big(\frac xN\Big) \act\Big( m_s^N*\iota_\varepsilon\Big(\frac xN\Big)\Big)\diff s\bigg| >\delta\bigg)=0
\end{multline}
for any $\delta >0$. Now that everything is expressed in terms of the empirical measure $m^N$, an application of the Portmanteau Theorem shows that 
\begin{multline}
	\limsup_{\varepsilon\to 0}\Q \bigg( \sup_{t\in [0,T]}\bigg| \langle\rho_t,G_t\rangle - \langle\rho_0,G_0\rangle -\int_0^t\langle\rho_s,\partial_tG_s\rangle\diff s \\
	-\int_0^t\int_{\varepsilon}^{1-\varepsilon} \partial_u^2G_s(u)\act\big( \rho_s*\iota_\varepsilon (u)\big)\diff u\diff s\bigg| >\delta \bigg)=0.
\end{multline}
Letting $\varepsilon$ go to 0, we get that $\Q$--almost surely
\begin{equation}
	\langle\rho_t,G_t\rangle - \langle \rho_0,G_0\rangle - \int_0^t\langle\rho_s,\partial_tG_s\rangle\diff s - \int_0^t \big\langle\act (\rho_s),\partial_u^2G_s\big\rangle\diff s =0
\end{equation}
for all $t\in [0,T]$ and all $G\in\mathcal{C}_c^{1,2}([0,T]\times [0,1])$. Since, as mentioned in \eqref{eq:initialEmpMeasure}, we have that $\rho_0=\rho^\mathrm{ini}$, $\Q$--almost surely, we recognize immediately the weak formulation \eqref{eq:weakformulation_Dirichlet}. Then, recall Definition \ref{defin:weaksolDirichlet}: we also need to prove the last point (iii), namely that $\rho$ satisfies the Dirichlet boundary conditions \eqref{eq:DirichletBC}. This is done thanks to the following result.

\begin{lemma}[Replacement lemma at the boundaries]
	\label{lemma:replacement_boundary}
	If $\theta <1$, then for all $t\in [0,T]$ we have
	\begin{equation}
		\lim_{N\to +\infty} \E_{\nu_0^N}\left[\left| \int_0^t \big( h_1(s) -\alpha\big)\diff s\right|\right]=0.
	\end{equation}
	The same holds true if we replace $h_1(s)$ by $h_{N-1}(s)$ and $\alpha$ by $\beta$ in this expression.
\end{lemma}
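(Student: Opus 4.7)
The starting point is the algebraic identity
\[
h_1(\eta)-\alpha=-\mathcal{L}_\ell\eta_1(\eta),
\]
which one verifies by direct computation using $h_1=\alpha\eta_1+(1-\alpha)\eta_1\eta_2$ (since $\eta_0\equiv\alpha$) and $\mathcal{L}_\ell\eta_1=b_\ell(\eta)(1-2\eta_1)=\alpha(1-\eta_1)-(1-\alpha)\eta_1\eta_2$. This exhibits $h_1-\alpha$ as a coboundary for the left boundary dynamics, which is exactly what opens the door to an entropy/Dirichlet form argument localized at the boundary.

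The plan is then to combine the entropy inequality with the Feynman--Kac formula: for any $\gamma>0$,
\[
\E_{\nu_0^N}\Big[\Big|\int_0^t\big(h_1(s)-\alpha\big)\diff s\Big|\Big]\le\frac{1}{\gamma}\Big(H(\nu_0^N\vert\mu^N)+\log 2+\max_{\pm}\log\E_{\mu^N}\big[e^{\pm\gamma\int_0^t(h_1-\alpha)\diff s}\big]\Big).
\]
By Feynman--Kac together with Proposition \ref{prop: estimate Dirichlet form}, each exponential moment is bounded by $\exp\big(t\sup_f\{\pm\gamma\mu^N((h_1-\alpha)f)-\tfrac{N^2}{4}\mathfrak{D}_N(f)\}+tC_5 N\big)$, the supremum running over probability densities $f$ with respect to $\mu^N$. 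The crux is then to control the linear term by the boundary Dirichlet form: starting from the identity above and integrating by parts against the flip $\eta\mapsto\eta^1$, one writes
\[
\mu^N\big((h_1-\alpha)f\big)=\tfrac{1}{2}\sum_\eta b_\ell(\eta)(\eta^1_1-\eta_1)\big(f(\eta^1)-f(\eta)\big)\mu^N(\eta)+\mathcal{E}_N,
\]
where $\mathcal{E}_N$ is an error accounting for the lack of exact detailed balance at the boundary. Since the Markovian construction of $\mu^N$ gives $b_\ell(\eta)\mu^N(\eta)-b_\ell(\eta^1)\mu^N(\eta^1)=\mathcal{O}(\mu^N(\eta)/N)$ (essentially because $a_2=\alpha+\varepsilon_N$ rather than exactly $\alpha$), one has $|\mathcal{E}_N|=\mathcal{O}(1/N)$. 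A standard Cauchy--Schwarz then yields, using the uniform boundedness of $b_\ell$ and of the Radon--Nikodym ratio $\mu^N(\eta^1)/\mu^N(\eta)$,
\[
|\mu^N((h_1-\alpha)f)|\le C\sqrt{\mathfrak{D}_\ell(f)}+\mathcal{O}(1/N).
\]

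Since $\mathfrak{D}_N(f)\ge(\kappa/N^\theta)\mathfrak{D}_\ell(f)$, completing the square gives $\gamma C\sqrt{\mathfrak{D}_\ell(f)}-\tfrac{\kappa N^{2-\theta}}{4}\mathfrak{D}_\ell(f)\le\gamma^2C^2/(\kappa N^{2-\theta})$, and combining with the crude entropy bound $H(\nu_0^N\vert\mu^N)\le C_4 N$ from Lemma \ref{lem:boundentropy} leads, for some constants $A,B>0$, to
\[
\E_{\nu_0^N}\Big[\Big|\int_0^t(h_1-\alpha)\diff s\Big|\Big]\le\frac{A N}{\gamma}+\frac{B\gamma}{N^{2-\theta}}+\mathcal{O}(1/N).
\]
The two leading terms balance at $\gamma=N^{(3-\theta)/2}$, producing a bound of order $N^{(\theta-1)/2}$ that vanishes precisely when $\theta<1$. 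This makes transparent why the slow-boundary regime $\theta<1$ is exactly the threshold for a Dirichlet-type replacement.

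The main obstacle will be the integration by parts step, which requires careful bookkeeping of the approximate-detailed-balance error: one must verify that the $\mathcal{E}_N=\mathcal{O}(1/N)$ contribution, after being inserted in the Feynman--Kac variational formula and multiplied by $\gamma$, does not spoil the $\sqrt{\mathfrak{D}_\ell(f)}$ estimate and ultimately produces only an $\mathcal{O}(1/N)$ correction after dividing back by $\gamma$. The right-boundary statement follows by the symmetric argument, using the analogous identity $h_{N-1}-\beta=-\mathcal{L}_r\eta_{N-1}$ and the boundary Dirichlet form $\mathfrak{D}_r$ in place of $\mathfrak{D}_\ell$.
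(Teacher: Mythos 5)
Your proof is correct and follows the same global strategy as the paper's: entropy inequality with reference measure $\mu^N$, Feynman--Kac, then a variational bound of $\mu^N\big((h_1-\alpha)f\big)$ against the Dirichlet form, with the $N^\theta/N$ factor in $\mathfrak{D}_N$ providing the vanishing condition $\theta<1$. Where you differ is in the local manipulation of $\mu^N((h_1-\alpha)f)$, and your route is cleaner. The paper instead notes that $X=h_1-\alpha$ has mean zero, conditions $f$ on $(\eta_1,\eta_2)$, writes out three boundary configurations and applies Young's inequality twice, producing both a $\mathfrak{D}_\ell$ and a $\mathfrak{D}_0^1$ contribution; in fact the second contribution comes from the term $X(\bulletone\circs)\big[\frak{f}(\bulletone\circs)-\frak{f}(\bulletone\bullets)\big]\mu^N(\bulletone\circs)$, which vanishes identically since $X(\bulletone\circs)=h_1-\alpha=\alpha\cdot1+(1-\alpha)\cdot0-\alpha=0$, so the whole $\mathfrak{D}_0^1$ detour is spurious. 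Your coboundary identity $h_1-\alpha=-\mathcal L_\ell\eta_1$ followed by symmetrization and Cauchy--Schwarz goes directly to $|\mu^N((h_1-\alpha)f)|\le C\sqrt{\mathfrak{D}_\ell(f)}+\mathcal{O}(1/N)$, which is all that is needed. You also optimize $\gamma$ explicitly, yielding the transparent rate $N^{(\theta-1)/2}$, whereas the paper passes to the double limit $N\to\infty$ then $\gamma\to\infty$; both give the statement. Finally, your bookkeeping of the detailed-balance defect is actually sharper than the paper's: since $\mu^N(\bulletone\bullets)=\bar\rho(\alpha)a_2$ with $a_2=\alpha+\varepsilon_N$, one has $b_\ell(\circone\bullets)\mu^N(\circone\bullets)-b_\ell(\bulletone\bullets)\mu^N(\bulletone\bullets)=-(1-\alpha)\bar\rho(\alpha)\varepsilon_N=\mathcal{O}(1/N)$ rather than zero, so the boundary reversibility of $\mu^N$ is only approximate (the paper's computation in the proof of Proposition~\ref{prop: estimate Dirichlet form} uses $a_1$ where $a_2$ should appear). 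This $\mathcal{O}(1/N)$ error is harmless -- it enters squared in the Dirichlet estimate and divided by $\gamma$ in the replacement lemma -- but you are right to track it, and your proof handles it correctly.
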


Here also, the proof of Lemma \ref{lemma:replacement_boundary} has to be handled carefully, and therefore we postpone it to Section \ref{sec:replacement_boundary}.

Now, if we combine Lemma \ref{lemma:replacement_boundary} with the fact that, for any $t\in [0,T]$,
\[ \limsup_{\varepsilon\to 0}\limsup_{N\to +\infty} \E_{\nu_0^N} \left[ \left| \int_0^t\big(h_1(s)-\act \big( \eta_1^{\varepsilon N}(s)\big)\big)\diff s\right|\right] =0\]
(which is direct consequence of Lemma \ref{lemma: replacement bulk} with $\varphi\equiv 1$), we get that
\begin{equation}
	\limsup_{\varepsilon\to 0}\limsup_{N\to +\infty}\E_{\nu_0^N}\left[\left|\int_0^t \big(\act\big(\eta_1^{\varepsilon N}(s)\big)-\alpha\big)\diff s\right|\right] =0.
\end{equation}
Using Markov's inequality and the Portmanteau Theorem to pass to the limit over $N$ as before, we get that
\begin{equation*}
	\limsup_{\varepsilon\to 0}\Q\big( \big|\act\big(\rho_t*\iota_{2\varepsilon} (0)\big) - \alpha\big| >\delta \big) =0
\end{equation*}
for any $t\in [0,T]$ and any $\delta >0$. This proves that, $\Q$--almost surely, we have
\begin{equation*}
	\act\big( \rho_t(0)\big) = \alpha\qquad\Longleftrightarrow\qquad \rho_t(0)=\bar\rho (\alpha )
\end{equation*}
for almost every $t\in [0,T]$, and we obtain the corresponding result on the right boundary by repeating this proof.

To sum up, we have proved that the limit density profile satisfies points (ii) and (iii) of Definition \ref{defin:weaksolDirichlet}. The proof of Theorem \ref{thm: hydrodynamic limit} in the case $\theta<1$ will be concluded as soon as we prove point (i), namely that $\mathfrak{a}(\rho)$ belongs to $L^2([0,T],\mathcal{H}^1)$. This property follows from an \emph{energy estimate} which holds true for any value of $\theta$, and we will give its complete proof in full generality in Section \ref{sec:energy} below.

We have completed the case $\theta <1$ in Theorem \ref{thm: hydrodynamic limit}. Let us carry on with the other possible values of $\theta$.

\subsubsection{Case $\theta =1$}

We get back to a test function $G\in\mathcal{C}^{1,2}([0,T]\times [0,1])$ that is not necessarily compactly supported, and we assume that $\theta =1$. In particular, if we replace the discrete derivatives of $G$ by their continuous versions and we make use of Lemma \ref{lemma: replacement bulk}, the last two integrals in \eqref{Dynkin_computed} no longer vanish, and we obtain instead that Dynkin's martingale rewrites
\begin{align}
	\label{Dynkin_computed_Robin}
	\mathbb{M}_t^N&(G)= \langle m_t^N,G_t\rangle - \langle m_0^N,G_0\rangle - \int_0^t \langle m_s^N,\partial_tG_s\rangle\diff s \notag \\ 
&	+\int_0^t\Big\lbrace \partial_uG_s(1)\act\Big( m_s^N*\iota_{2\varepsilon}\Big(\frac{N-1}{N}\Big)\Big)- \partial_uG_s(0)\act\Big( m_s^N*\iota_{2\varepsilon} \Big(\frac1N\Big)\Big)\Big\rbrace\diff s \notag\\
&	+\kappa \int_0^t \Big\lbrace G_s\Big(\frac{N-1}{N}\Big)\Big( \act\Big( m_s^N*\iota_{2\varepsilon}\Big( \frac{N-1}{N}\Big)\Big) -\beta\Big) - G_s\Big(\frac1N\Big)\Big(\alpha - \act\Big(m_s^N*\iota_{2\varepsilon} \Big(\frac1N\Big)\Big)\Big)\Big\rbrace\diff s \notag\\
&	- \int_0^t \frac1N\sum_{x\in\Sigma_N^\varepsilon}\partial_u^2G_s\Big(\frac xN\Big) \act\Big( m_s^N*\iota_\varepsilon \Big(\frac xN\Big)\Big)\diff s +\mathcal{o}_{N,\varepsilon}(1)
\end{align}
where $\mathcal{o}_{N,\varepsilon}(1)$ is a (random) error term that vanishes in probability as $N$ goes to $+\infty$, and $\varepsilon$ to 0. Using the same procedure as before, we derive from this expression that any limit point $\Q$ is concentrated on trajectories of measures $m$ with density $\rho$ with respect to the Lebesgue measure, satisfying
\begin{align*}
	\langle\rho_t,G_t\rangle - \langle\rho_0,G_0\rangle& - \int_0^t \langle \rho_s,\partial_tG_s\rangle\diff s - \int_0^t \big\langle \act(\rho_s),\partial_u^2G_s\big\rangle\diff s\\
&	+ \int_0^t\Big\lbrace \partial_uG_s(1)\act\big( \rho_s(1)\big)-\partial_uG_s(0)\act\big(\rho_s(0)\big)\Big\rbrace\diff s\\
&	+ \kappa \int_0^t\Big\lbrace G_s(1)\big( \act\big( \rho_s(1)\big)-\beta\big)- G_s(0)\big( \alpha -\act\big(\rho_s(0)\big)\big)\Big\rbrace\diff s=0
\end{align*}
for any $t\in [0,T]$ and any test function $G\in\mathcal{C}^{1,2}([0,T]\times [0,1])$. We recognize immediately the weak formulation \eqref{eq:weakformulation_Robin} so $\rho$ is a weak solution of the fast diffusion equation \eqref{eq:fast_diffusion_equation_Robin} with Robin boundary conditions in the sense of Definition \ref{defin:weaksolRobin}. Together with the energy estimate given in the next Section \ref{sec:energy}, this proves the case $\theta =1$ in Theorem \ref{thm: hydrodynamic limit}.

\subsubsection{Case $\theta >1$} 

Since the function $G$ is bounded, it is clear that the last integral in \eqref{Dynkin_computed} is of order $\mathcal{O}(N^{1-\theta})$. In particular, when $\theta >1$ this term vanishes and everything can be done like in the case $\theta =1$ by taking $\kappa =0$. This proves the result for $\theta >1$ also.

\subsection{Energy estimate} \label{sec:energy}

Finally, in order to match our definition of weak solution in the different cases, we need to prove that $\rho$ satisfies an energy estimate, in the sense that $\act (\rho )$ belongs to the Sobolev space $L^2\big( [0,T],\mathcal{H}^1\big)$. We already know that $\rho$ takes its values in $\big[\frac12,1\big]$ almost everywhere, so it implies that $\act (\rho )$ takes its values in $[0,1]$ almost everywhere and then $\act (\rho )\in L^2\big( [0,T]\times [0,1]\big)$. We can thus define the linear functional 
\begin{equation*}
	\begin{array}{l|rcl}
		\mathcal{l} : & \mathcal{C}_c^{0,1}\big( [0,T]\times (0,1)\big) & \longrightarrow & \R \\
			& G & \longmapsto & \dlangle \act (\rho ),\partial_uG\drangle . \end{array}
\end{equation*}
If we show that this functional is $\Q$-almost surely continuous, then since $\mathcal{C}_c^{0,1}\big( [0,T]\times (0,1)\big)$ is dense in $L^2\big( [0,T]\times [0,1]\big)$, we will be able to extend it to a $\Q$-almost surely continuous functional on $L^2\big( [0,T]\times [0,1]\big)$. Then, we can invoke Riesz representation Theorem to deduce that there exists a function $\partial_u\act (\rho )\in L^2\big( [0,T]\times [0,1]\big)$ such that $\mathcal{l}(G)=-\dlangle \partial_u\act (\rho ),G\drangle$. This implies, as desired, that $\act (\rho )\in L^2\big( [0,T],\mathcal{H}^1\big)$.

Our goal is therefore to prove that the functional $\mathcal{l}$ is $\Q$-almost surely continuous, and as it is linear, it suffices to show that it is $\Q$-almost surely bounded. This is a consequence of Lemma \ref{lemma:energyestimate} that we state and prove below.

\begin{lemma}\label{lemma:energyestimate}
	There exists a constant $c>0$ such that
	\begin{equation}\label{eq:energyestimate}
		\E^\Q \left[ \sup_{G\in\mathcal{C}_c^{0,1}([0,T]\times (0,1))}\Big\lbrace \mathcal{l}(G)-c \dlangle G,G\drangle\Big\rbrace\right] <+\infty ,
	\end{equation}
	where $\E^\Q$ denotes the expectation with respect to the measure $\Q$.
\end{lemma}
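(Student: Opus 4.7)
The plan is to run the standard energy-estimate argument of the entropy method \cite[Chapter 7]{KL} at the microscopic level, with the classical reversibility replaced everywhere by the quasi-reversibility \eqref{eq:quasi-reversibility}. By density of smooth functions in $L^2([0,T]\times[0,1])$ and continuity of $\mathcal{l}$ and $\dlangle\cdot,\cdot\drangle$, together with monotone convergence, it suffices to prove that there exists $c>0$ such that
\[
\sup_{K\in\mathbb{N}}\E^\Q\Bigl[\max_{1\leq k\leq K}\{\mathcal{l}(G_k)-c\dlangle G_k,G_k\drangle\}\Bigr]<+\infty
\]
for some countable family $(G_k)_{k\geq 1}\subset\mathcal{C}_c^{1,2}([0,T]\times(0,1))$ dense in $L^2$. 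Define the microscopic counterpart
\[
\Phi_k^N:=\int_0^T\Bigl\{\frac{1}{N}\sum_{x\in\Lambda_N}\partial_u G_k(s,x/N)\,h_x(s)-\frac{c}{N}\sum_{x\in\Lambda_N}G_k(s,x/N)^2\Bigr\}\,ds.
\]
Combining Lemma \ref{lemma: replacement bulk} (applied in turn to each $G_k$, with $h_x$ replaced by $\act(\eta_x^{\varepsilon N})$ and then by the empirical measure) with Portmanteau's theorem and the fact that $\Q$ is a limit point of $(\Q^N)$, one obtains
\[
\E^\Q\Bigl[\max_{k\leq K}\{\mathcal{l}(G_k)-c\dlangle G_k,G_k\drangle\}\Bigr]\leq\liminf_{N\to\infty}\E_{\nu_0^N}\Bigl[\max_{k\leq K}\Phi_k^N\Bigr].
\]

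\textbf{Entropy inequality and Feynman-Kac.} Using the entropy inequality, Lemma \ref{lem:boundentropy}, and the union bound $e^{N\max_k\Phi_k^N}\leq\sum_{k\leq K}e^{N\Phi_k^N}$, one gets
\[
\E_{\nu_0^N}\Bigl[\max_{k\leq K}\Phi_k^N\Bigr]\leq C_4+\frac{\log K}{N}+\max_{k\leq K}\frac{1}{N}\log\E_{\mu^N}[e^{N\Phi_k^N}].
\]
The Feynman-Kac inequality applied to the diffusively accelerated generator $N^2\mathcal{L}_N$, combined with Proposition \ref{prop: estimate Dirichlet form}, then yields
\[
\frac{1}{N}\log\E_{\mu^N}[e^{N\Phi_k^N}]\leq C_5 T+\frac{1}{N}\int_0^T\sup_f\Bigl\{\mu^N(V_s^k f)-\tfrac{N^2}{4}\frak{D}_N(f)\Bigr\}\,ds,
\]
where the supremum is taken over probability densities $f$ with respect to $\mu^N$ and
\[
V_s^k(\eta):=\sum_{x\in\Lambda_N}\partial_u G_k(s,x/N)\,h_x(\eta)-c\sum_{x\in\Lambda_N}G_k(s,x/N)^2.
\]

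\textbf{Variational bound via quasi-reversibility.} Discrete summation by parts together with the gradient condition \eqref{eq:gradientcondition} allows to rewrite, modulo $\mathcal{O}(1)$ boundary and discretization errors (exploiting $G_k\in\mathcal{C}_c^{1,2}$),
\[
\sum_x\partial_u G_k(s,x/N)\,h_x(\eta)=N\sum_x G_k(s,x/N)\,j_{x,x+1}(\eta)+\mathcal{O}(1).
\]
For each $x\in\integers{1}{N-2}$, the involutive change of variable $\eta\mapsto\eta^{x,x+1}$ together with the quasi-reversibility estimate \eqref{eq:quasi-reversibility} gives $\mu^N(j_{x,x+1}f)=\tfrac{1}{2}\int c_{x,x+1}(\eta_x-\eta_{x+1})[f(\eta)-f(\eta^{x,x+1})]\,d\mu^N+\mathcal{O}(1/N)$. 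Factorizing the bracket as $(\sqrt{f}-\sqrt{f\circ\tau})(\sqrt{f}+\sqrt{f\circ\tau})$ and invoking Cauchy-Schwarz against $\frak{D}_0^x(f)$ (defined in \eqref{def: D0x}) yields $|\mu^N(j_{x,x+1}f)|\leq C\sqrt{\frak{D}_0^x(f)}+\mathcal{O}(1/N)$. Young's inequality $|ab|\leq a^2/(4\gamma)+\gamma b^2$ with $\gamma=N^2/4$ then provides
\[
N|G_k(s,x/N)\,\mu^N(j_{x,x+1}f)|\leq C^2 G_k(s,x/N)^2+\tfrac{N^2}{4}\frak{D}_0^x(f)+\mathcal{O}(1).
\]
Summing over $x$ and using $\sum_x\frak{D}_0^x(f)=\frak{D}_0(f)\leq\frak{D}_N(f)$, we choose $c:=C^2$ (uniform in $k$ and $N$) to absorb the quadratic contribution, which gives $\mu^N(V_s^kf)-\tfrac{N^2}{4}\frak{D}_N(f)\leq\mathcal{O}(N)$ uniformly in $f$.

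\textbf{Conclusion and main obstacle.} Collecting the estimates of the three previous steps, we obtain $\E_{\nu_0^N}[\max_{k\leq K}\Phi_k^N]\leq C_4+\log K/N+\mathcal{O}(T)$, with the implicit constant independent of $k$, $K$ and $N$. Letting first $N\to\infty$ (which eliminates the $\log K/N$ term) and then $K\to\infty$ via monotone convergence gives the announced uniform bound, and therefore the continuity of $\mathcal{l}$ and the desired $L^2([0,T],\mathcal{H}^1)$ regularity of $\act(\rho)$. The main technical obstacle is concentrated in the third step: in the classical SSEP setting the analogous computation relies on the exact reversibility of the Bernoulli product measures, whereas here $\mu^N$ is only quasi-reversible with an $\mathcal{O}(1/N)$ correction coming from \eqref{eq:quasi-reversibility}. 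Fortunately, after the factor $N$ coming from the summation by parts and the summation over $x\in\Lambda_N$, this correction contributes at most $\mathcal{O}(N)$ inside $\sup_f$, which is precisely absorbed by the $1/N$ prefactor supplied by Feynman-Kac, leaving only a harmless $\mathcal{O}(1)$ contribution in the final bound.
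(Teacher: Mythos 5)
Your proof is correct and follows essentially the same route as the paper: reduction to a countable dense family and monotone convergence, passage from $\Q$ to $\Q^N$ via the replacement lemma and lower semicontinuity, entropy inequality with the crude bound $H(\nu_0^N|\mu^N)\le C_4N$, union bound, Feynman--Kac with Proposition \ref{prop: estimate Dirichlet form}, summation by parts using the gradient condition \eqref{eq:gradientcondition} and quasi-reversibility \eqref{eq:quasi-reversibility}, then Young to absorb the Dirichlet form and the $\sum_x G^2$ contribution via the choice of $c$. The only slip is cosmetic: in your Young step the correct parameter is $\gamma=C^2$ (not $N^2/4$) to produce $C^2 G_k^2 + \tfrac{N^2}{4}\frak{D}_0^x(f)$ from $NC|G_k|\sqrt{\frak{D}_0^x(f)}$, and the paper instead applies Young directly inside the integral with a free parameter $A$ later set to $1/N$ — both yield the same absorption and an admissible constant $c$.
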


\begin{proof}
The space $\mathcal{C}_c^{0,1}\big( [0,T]\times (0,1)\big)$ endowed with the topology of the norm $\| \cdot \|_\infty +\|\partial_u\cdot\|_\infty$ is separable, so we may consider a dense sequence $\lbrace G^j\rbrace_{j\in\N}$ in it. By the monotone convergence theorem, it is sufficient to prove that there exist positive constants $c$ and $K$ such that for any $k\in\N$,
\begin{equation*}\label{eq:energyestimate}
	\E^\Q \left[ \max_{j\le k}\Big\lbrace \mathcal{l}(G^j)-c \dlangle G^j,G^j\drangle\Big\rbrace\right] \le K.
\end{equation*}
For now, let $c$ be any positive real number. By approximation and Lebesgue differentiation Theorem, together with Fatou Lemma, the expectation above is bounded by
\begin{equation*}
	\liminf_{\varepsilon\to 0}\E^\Q \left[ \max_{j\le k}\int_0^T \left\lbrace \int_\varepsilon^{1-\varepsilon} \act\big( \rho_s*\iota_\varepsilon (u)\big)\partial_uG_s^j(u)\diff u - c\| G_s^j\|_{L^2}^2\right\rbrace\diff s\right]
\end{equation*}
The map
\begin{equation*}
	m_\cdot \in\mathcal{D}\big( [0,T],\mathcal{M}_+\big) \longmapsto \max_{j\le k}\int_0^T \left\lbrace \int_\varepsilon^{1-\varepsilon} \act\big( m_s*\iota_\varepsilon (u)\big)\partial_uG_s^j(u)\diff u - c\| G_s^j\|_{L^2}^2\right\rbrace\diff s
\end{equation*}
is bounded and lower semi-continuous in the Skorokhod topology, so using once again Fatou Lemma we can bound last expectation from above by
\begin{equation*}
	\liminf_{\varepsilon\to 0}\liminf_{N\to +\infty} \E_{\nu_0^N} \left[ \max_{j\le k}\int_0^T \left\lbrace \int_\varepsilon^{1-\varepsilon} \act\big( m_s^N*\iota_\varepsilon (u)\big)\partial_uG_s^j(u)\diff u - c\| G_s^j\|_{L^2}^2\right\rbrace\diff s\right] .
\end{equation*}
Using Lemma \ref{lemma: replacement bulk}, we reduce the problem to the study of
\begin{equation*}
	\liminf_{\varepsilon\to 0}\liminf_{N\to +\infty} \E_{\nu_0^N}\left[ \max_{j\le k} \int_0^T \left\lbrace \frac1N \sum_{x\in\Lambda_N} h_x(s)\partial_uG_s^j\Big(\frac xN\Big) - c \| G_s^j\|_{L^2}^2\right\rbrace\diff s\right].
\end{equation*}
Note that the sum should be a sum over $x\in\Sigma_N^\varepsilon$, but we have replaced it by a sum over $x\in\Lambda_N$ because as in \eqref{eq:errorLambdaNtoSigmaNeps}, the error we make is of order $\mathcal{O}(\varepsilon )$. Now, we use the entropy inequality \cite[Appendix A.1.8]{KL} to let our reference measure $\mu^N$ come into play, and also Jensen's inequality to bound this expectation above by
\begin{equation*}
	\frac{H(\nu_0^N|\mu^N)}{N}+\frac1N\log \E_{\mu^N}\left[\exp\left( \max_{j\le k} \int_0^T \left\lbrace  \sum_{x\in\Lambda_N} h_x(s)\partial_uG_s^j\Big(\frac xN\Big) - cN \| G_s^j\|_{L^2}^2\right\rbrace\diff s\right)\right].
\end{equation*}
Using Lemma \ref{lem:boundentropy} together with the inequality $\exp (\max_{j\le k}a_j)\le \sum_{j\le k}\exp (a_j)$, this expression is in turn bounded above by
\begin{equation*}
	C_4 + \frac1N\log \E_{\mu^N}\left[ \sum_{j=1}^k \exp\left( \int_0^T\left\lbrace \sum_{x\in\Lambda_N}h_x(s)\partial_uG_s^j\Big(\frac xN\Big) - cN\| G_s^j\|_{L^2}^2\right\rbrace\diff s\right)\right]
\end{equation*}
Now, thanks to the inequality
\begin{equation*}
	\limsup_{N\to +\infty}\frac{1}{N}\log (u_N+v_N) \le \max\left\lbrace \limsup_{N\to +\infty} \frac{1}{N}\log u_N,  \limsup_{N\to +\infty} \frac{1}{N}\log v_N\right\rbrace ,
\end{equation*}
we are left to show that there are positive constants $c$ and $K$ such that 
\begin{equation}\label{energyestimatetodo}
	\liminf_{N\to +\infty} \frac1N\log\E_{\mu^N}\left[ \exp\left(\int_0^T \left\lbrace \sum_{x\in\Lambda_N}h_x(s)\partial_uG_s\Big(\frac xN\Big)-cN\|G_s\|_{L^2}^2\right\rbrace\diff s\right)\right] \le K
\end{equation}
for any function $G\in\mathcal{C}_c^{0,1}\big( [0,T]\times (0,1)\big)$. Now, by Feynman-Kac Formula, the term in the limit can be bounded above by
\begin{equation}\label{sup_energyestimate}
	\int_0^T \sup_f\left\lbrace N\mu^N\big( \sqrt{f}\mathcal{L}_N\sqrt{f}\big) + \frac1N\sum_{x\in\Lambda_N}\partial_uG_s\Big(\frac xN\Big) \mu^N(h_xf) - c\|G_s\|_{L^2}^2 \right\rbrace\diff s
\end{equation}
where the supremum is carried over all density functions $f$ with respect to the measure $\mu^N$. We are already able to estimate the first term inside this supremum thanks to Proposition \ref{prop: estimate Dirichlet form}, so let us focus on the second one. As the test function $G$ is of class $\mathcal{C}^1$ with respect to the space variable, we can replace its space derivative by a discrete gradient (recall definitions \eqref{eq:discrete}) up to an error of order $\mathcal{O}\big(\frac1N\big)$, so that
\begin{equation*}
	\frac1N\sum_{x\in\Lambda_N}\partial_uG_s\Big(\frac xN\Big) \mu^N(h_xf)  = \int_{\Omega_N}\frac1N \sum_{x\in\Lambda_N} \nabla_N^-G_s\Big(\frac xN\Big) h_x(\eta ) f(\eta )\diff\mu^N(\eta ) + \mathcal{O}\Big(\frac1N\Big)
\end{equation*}
using the fact that each function $h_x$ is bounded by 1, and that $f$ is a density with respect to $\mu^N$. Now, let us perform a summation by parts in the sum
\begin{align*}
	\frac1N \sum_{x\in\Lambda_N} \nabla_N^-G_s\Big(\frac xN\Big) h_x(\eta ) & = \sum_{x=1}^{N-1}\left( G_s\Big(\frac xN\Big)-G_s\Big( \frac{x-1}{N}\Big)\right) h_x(\eta ) \\
	& = \sum_{x=1}^{N-2} \big( h_x(\eta )-h_{x+1}(\eta )\big)G_s\Big( \frac xN\Big) +h_{N-1}(\eta )G_s\Big(\frac{N-1}{N}\Big) -h_1(\eta )G_s(0)\\
	& = \sum_{x=1}^{N-2}j_{x,x+1}(\eta )G_s\Big( \frac xN\Big) 
\end{align*}
recalling the gradient condition \eqref{eq:gradientcondition}, and choosing $N$ large enough so that the boundary terms vanish since $G_s$ has compact support included in $(0,1)$. Now, recalling the definition \eqref{def:current} of the current, we are left to study the term
\begin{equation*}
	\sum_{x=1}^{N-2}G_s\Big(\frac xN\Big)\int_{\Omega_N}c_{x,x+1}(\eta )(\eta_x-\eta_{x+1}) f(\eta )\diff\mu^N(\eta ).
\end{equation*}
Note that these integrals are once again integrals over the set $\Omega_N^x$ defined in \eqref{def:OmegaNx}. Split it into two halves, and perform the change of variable $\eta \leadsto \eta^{x,x+1}$ in the second half to write it as 
\begin{multline*}
	\frac12 \sum_{x=1}^{N-2}G_s\Big(\frac xN\Big)\int_{\Omega_N^x}c_{x,x+1}(\eta )(\eta_x -\eta_{x+1})f(\eta )\diff\mu^N(\eta ) \\+\frac12 \sum_{x=1}^{N-2}G_s\Big(\frac xN\Big) \int_{\Omega_N^x} c_{x,x+1}(\eta^{x,x+1})(\eta_{x+1}-\eta_x)f(\eta^{x,x+1})\frac{\mu^N(\eta^{x,x+1})}{\mu^N(\eta )}\diff\mu^N(\eta ).
\end{multline*}
Recall the ``quasi-reversibility'' relation \eqref{eq:quasi-reversibility}. Using this, together with the fact that the integral $\int_{\Omega_N^x}f(\eta^{x,x+1})\diff\mu^N(\eta )$ is uniformly bounded in $x$, we thus get that this expression reads
\begin{equation*}
	\frac12\sum_{x=1}^{N-2}G_s\Big( \frac xN\Big) \int_{\Omega_N^x}c_{x,x+1}(\eta )(\eta_{x+1} -\eta_x)\big[ f(\eta^{x,x+1})-f(\eta )\big]\diff\mu^N(\eta ) + \mathcal{O}(1).
\end{equation*}
By Young's inequality we have that, for any $A>0$,
\begin{multline*}
	G_s\Big(\frac xN\Big) (\eta_{x+1}-\eta_x)\big[ f(\eta^{x,x+1})-f(\eta )\big] \le \frac{1}{2A}\big[ \sqrt{f(\eta^{x,x+1})}-\sqrt{f(\eta )}\big]^2 \\
	+ \frac A2 G_s\Big(\frac xN\Big)^2 \underbrace{(\eta_{x+1}-\eta_x)^2}_{=1}\big[ \sqrt{f(\eta^{x,x+1})}+\sqrt{f(\eta )}\big]^2
\end{multline*}
so that the remaining sum is bounded by
\begin{multline*}
	\frac{1}{4A}\sum_{x=1}^{N-2} \int_{\Omega_N^x}c_{x,x+1}(\eta )\big[ \sqrt{f(\eta^{x,x+1})}-\sqrt{f(\eta )}\big]^2 \diff\mu^N(\eta ) \\
	+ \frac A4 \sum_{x=1}^{N-2}G_s\Big( \frac xN\Big)^2\int_{\Omega_N^x}c_{x,x+1}(\eta )\big[ \sqrt{f(\eta^{x,x+1})}+\sqrt{f(\eta )}\big]^2\diff\mu^N(\eta ).
\end{multline*}
The first term above is a piece of the total Dirichlet form, so it can be bounded by $\frac{1}{4A}\mathfrak{D}_N(f)$. All the integrals in the second term are bounded above by a constant $\tilde{C}$ uniformly in $x$, this can be seen using the fact that $c_{x,x+1}(\eta )\le 1$, the inequality $(a+b)^2\le 2(a^2+b^2)$, and the fact that $f$ is a density with respect to $\mu^N$. To sum up, we have obtained the following estimation
\begin{equation*}
	\frac1N\sum_{x\in\Lambda_N}\partial_uG_s\Big(\frac xN\Big) \mu^N(h_xf) \le \frac{1}{4A}\mathfrak{D}_N(f) + \tilde{C}\frac{A}{4} \sum_{x=1}^{N-2}G_s\Big(\frac xN\Big)^2 + \widehat{C}
\end{equation*}
for any $A>0$, and for some constants $\tilde{C},\widehat{C}>0$ that do not depend on $G$. This, together with Proposition \ref{prop: estimate Dirichlet form} allow us to bound \eqref{sup_energyestimate} by
\begin{equation*}
	\int_0^T\sup_f\left\lbrace C_5+ \left(\frac{1}{4A} - \frac{N}{4}\right)\mathfrak{D}_N(f)+ \tilde{C}\frac{A}{4} \sum_{x=1}^{N-2}G_s\Big(\frac xN\Big)^2 + \widehat{C} - c\|G_s\|_{L^2}^2 \right\rbrace\diff s.
\end{equation*}
Choosing $A=\frac1N$ to remove the dependence with respect to the density $f$ gives that this expression is bounded above by
\begin{equation*}
	T(C_5+\widehat{C}) + \int_0^T\left(\frac{\tilde{C}}{4}\frac{1}{N}\sum_{x=1}^{N-2}G_s\Big(\frac xN\Big)^2 -c\|G_s\|_{L^2}^2\right)\diff s.
\end{equation*}
As $N$ goes to $+\infty$, the integral above converges to $(\tilde{C}/4-c)\dlangle G,G\drangle $, which is non-positive if we choose $c>\tilde{C}/4$. Making this choice of $c$ then proves that the limit in \eqref{energyestimatetodo} is bounded, and it concludes the proof of Lemma \ref{lemma:energyestimate}.
\end{proof}

It remains now to prove the replacement Lemmas \ref{lemma: replacement bulk} and \ref{lemma:replacement_boundary}, which require significant work. Section \ref{sec: Replacement lemma bulk} below is devoted to prove the former, and Section \ref{sec:replacement_boundary} the latter.

\section{Fixing the profile at the boundary for $\theta <1$: Proof of Lemma \ref{lemma:replacement_boundary}}
\label{sec:replacement_boundary}

The proof we give uses similar ideas to the one in \cite{BPGN2020}, namely we use the entropy inequality to be reduced to consider only expectations with respect to our reference measure $\mu^N$, and then we apply the entropy and Dirichlet estimates we proved to get the result. Though, some difficulties arise due to the fact that the reference measure is not product.

Fix $t\in [0,T]$. We want to prove that
\begin{equation*}
	\lim_{N\to +\infty}\E_{\nu_0^N}\left[\left|\int_0^t X(\eta_s)\diff s\right|\right] =0
\end{equation*}
where $X:\Omega_N\longrightarrow\R$ is a local function with is either $h_1-\alpha$ or $h_{N-1}-\beta$. We will only consider the former case because the latter is treated in the exact same way. For this, write
\begin{equation*}
	\E_{\nu_0^N}\left[\left|\int_0^t X(\eta_s)\diff s\right|\right]=\int_{\Omega_N}\E_\eta\left[\left|\int_0^t X(\eta_s)\diff s\right|\right]\diff\nu_0^N(\eta )
\end{equation*}
where $\E_\eta$ denotes the expectation under the law of the process starting from the configuration $\eta$. Recall the definition of our reference measure $\mu^N$, then the entropy inequality and Jensen's inequality allow us to bound it by  
\begin{equation*}
	\frac{H(\nu_0^N|\mu^N)}{\gamma N} + \frac{1}{\gamma N}\log\E_{\mu^N}\left[ \exp\left(\gamma N\left|\int_0^t X(\eta_s)\diff s\right|\right)\right]
\end{equation*}
for any $\gamma >0$. Since $\nu_0^N$ is concentrated on the ergodic component, thanks to Lemma \ref{lem:boundentropy}, the first term is bounded by $C_4\gamma^{-1}$ and vanishes as $\gamma\to +\infty$. It remains only to estimate the second term.  By the inequality $e^{|x|}\le e^x+e^{-x}$ together with the inequality
\begin{equation*}
	\limsup_{N\to +\infty}\frac{1}{N}\log (u_N+v_N) \le \max\left\lbrace \limsup_{N\to +\infty} \frac{1}{N}\log u_N,  \limsup_{N\to +\infty} \frac{1}{N}\log v_N\right\rbrace ,
\end{equation*}
it is sufficient to consider this term without the absolute value.  The Feynman-Kac formula permits to bound it by 
\begin{equation}\label{sup à estimer boundary}
	t\sup_f \left\lbrace  \mu^N(Xf) + \frac{N}{\gamma}\mu^N\big(\sqrt{f}\mathcal{L}_N\sqrt{f}\big)\right\rbrace .
\end{equation}
where the supremum is taken over all density functions $f$ with respect to the measure $\mu^N$. The second term inside this supremum can be estimated by Proposition \ref{prop: estimate Dirichlet form},  we therefore focus on $\mu^N(Xf)$. Note that $X$ has mean 0 under $\mu^N$, indeed
\begin{align*}
	\mu^N(X)=\mu^N\big( \alpha\eta_1+(1-\alpha)\eta_1\eta_2-\alpha \big) & = \alpha\bar\rho (\alpha )+(1-\alpha )\bar\rho (\alpha )\alpha -\alpha = 0
\end{align*}
since $(1-\alpha )\bar\rho (\alpha )=1-\bar\rho (\alpha)$. Denote by $\frak{f}$ the conditional expectation of $f$ under $\mu^N$ with respect to the coordinates $(\eta_1,\eta_2)$, it is a function on $\lbrace 0,1\rbrace^2$. Since $X$ is $(\eta_1,\eta_2)$-measurable and has mean 0 under $\mu^N$, we have that
\begin{align*}
	\mu^N(Xf) = \mu^N(X\frak{f}) & = \int_{\eta '\in\lbrace 0,1\rbrace^2}X(\eta ')\frak{f}(\eta ')\diff\mu^N(\eta ')\\
	& = \int_{\eta '\in\lbrace 0,1\rbrace^2}X(\eta ')\big[\frak{f}(\eta ')-\frak{f}(\bulletone\bullets )\big] \diff\mu^N(\eta ') + \frak{f}(\bulletone\bullets )\int_{\eta '\in\lbrace 0,1\rbrace^2}X(\eta ')\diff\mu^N(\eta ')\\
	& = \int_{\eta '\in\lbrace 0,1\rbrace^2}X(\eta ')\big[\frak{f}(\eta ')-\frak{f}(\bulletone\bullets )\big] \diff\mu^N(\eta ').
\end{align*}
As $\mu^N$ is concentrated on the ergodic component, this integral is in fact a sum of two terms:
\begin{equation*}
	\mu^N(Xf) = X(\circone\bullets )\big[ \frak{f}(\circone\bullets )-\frak{f}(\bulletone\bullets )\big]\mu^N(\circone\bullets )+ X(\bulletone\circs )\big[ \frak{f}(\bulletone\circs )-\frak{f}(\bulletone\bullets )\big]\mu^N(\bulletone\circs )
\end{equation*}
Use now twice Young's inequality 
\begin{equation}\label{Young}
	a-b=\sqrt{A}(\sqrt{a}+\sqrt{b})\times\frac{\sqrt{a}-\sqrt{b}}{\sqrt{A}}\le \frac{A}{2}(\sqrt{a}+\sqrt{b})^2+\frac{(\sqrt{a}-\sqrt{b})^2}{2A}
\end{equation}
which is valid for any $a,b,A>0$,  and the inequality $X(\eta ')\le 2$ to get that
\begin{subequations}
\begin{align}
	\mu^N(Xf) & \le A \big[ \sqrt{\frak{f}(\circone\bullets )}+\sqrt{\frak{f}(\bulletone\bullets )}\big]^2 \mu^N(\circone\bullets ) + A\big[ \sqrt{\frak{f}(\bulletone\circs )}+\sqrt{\frak{f}(\bulletone\bullets )}\big]^2\mu^N(\bulletone\circs ) \label{term +}\\
	& \quad + \frac{1}{A}\big[\sqrt{\frak{f}(\circone\bullets )}-\sqrt{\frak{f}(\bulletone\bullets )}\big]^2\mu^N(\circone\bullets )\label{term - 1}\\
	& \quad +\frac{1}{A}\big[ \sqrt{\frak{f}(\bulletone\circs )}-\sqrt{\frak{f}(\bulletone\bullets )}\big]^2\mu^N(\bulletone\circs ).\label{term - 2}
\end{align}
\end{subequations}
First of all, thanks to the inequality $(a+b)^2\le 2(a^2+b^2)$, we see that the right hand side of \eqref{term +} is bounded above by
\begin{equation*}
	2A\frak{f}(\circone\bullets )\mu^N(\circone\bullets) + 2A\frak{f}(\bulletone\bullets )\mu^N(\bulletone\bullets )\frac{\mu^N(\circone\bullets)}{\mu^N(\bulletone\bullets)} + 2A\frak{f}(\bulletone\circs)\mu^N(\bulletone\circs )+2A\frak{f}(\bulletone\bullets )\mu^N(\bulletone\bullets)\frac{\mu^N(\bulletone\circs)}{\mu^N(\bulletone\bullets)},
\end{equation*}
but
\begin{equation*}
	\frac{\mu^N(\circone\bullets )}{\mu^N(\bulletone \bullets)} = \frac{1-\bar\rho (\alpha )}{\bar\rho (\alpha )\alpha} = \frac{1-\alpha}{\alpha } \qquad\mbox{ and }\qquad \frac{\mu^N(\bulletone\circs)}{\mu^N(\bulletone\bullets)}= \frac{\bar\rho (\alpha )(1-\alpha )}{\bar\rho (\alpha )\alpha} = \frac{1-\alpha}{\alpha}
\end{equation*}
so using the fact that $f$ is a density with respect to $\mu^N$, we get that the right hand side of \eqref{term +} can be bounded by
\begin{equation}\label{estim_term+}
	2A + 2A\frac{1-\alpha}{\alpha} + 2A +2A\frac{1-\alpha}{\alpha} = \frac{4A}{\alpha}.
\end{equation}
Now, note that \eqref{term - 1} is equal to
\begin{equation}
	\label{eq:qttyboundary1}
	\frac{1}{A}\frac{1}{\alpha}b_\ell (\circone\bullets)\big[\sqrt{\frak{f}(\circone\bullets )}-\sqrt{\frak{f}(\bulletone\bullets )}\big]^2\mu^N(\circone\bullets ).
\end{equation}
Identifying $\frak{f}$ as a function on $\Omega_N$ by $\frak{f}(\eta):=\frak{f}(\eta_1,\eta_2)$, \eqref{eq:qttyboundary1} can be bounded by $\frak{D}_\ell (\frak{f})/A\alpha$ where $\frak{D}_\ell $ has been defined in \eqref{def: D ell}. Lastly, let us bound \eqref{term - 2} from above by
\begin{multline*}
	\frac{2}{A}\big[ \sqrt{\frak{f}(\bulletone\circs )}-\sqrt{\frak{f}(\circone\bullets )}\big]^2\mu^N(\bulletone\circs )  + \frac{2}{A}\big[ \sqrt{\frak{f}(\circone\bullets )}-\sqrt{\frak{f}(\bulletone\bullets )}\big]^2\mu^N(\bulletone\circs )\\
	 = \frac{2}{A}\frac{1}{\alpha}c_{1,2}(\bulletone\circs )\big[ \sqrt{\frak{f}(\bulletone\circs )}-\sqrt{\frak{f}(\circone\bullets )}\big]^2\mu^N(\bulletone\circs ) + \frac{2}{A}\frac{1}{\alpha} b_\ell (\circone\bullets )\big[ \sqrt{\frak{f}(\circone\bullets )}-\sqrt{\frak{f}(\bulletone\bullets )}\big]^2\mu^N(\circone\bullets )\frac{\mu^N(\bulletone\circs )}{\mu^N(\circone\bullets)},
\end{multline*}
but
\begin{equation*}
	\frac{\mu^N(\bulletone\circs)}{\mu^N(\circone\bullets)} = \frac{\bar\rho (\alpha )(1-\alpha)}{1-\bar\rho (\alpha)}=1
\end{equation*}
and we recognize portions of Dirichlet forms, so we get that \eqref{term - 2} is bounded above by
\begin{equation*}
	\frac{2}{A\alpha}\big(\mathfrak{D}_0^1(\frak{f})+\mathfrak{D}_\ell(\frak{f})\big)
\end{equation*}
where $\mathfrak{D}_0^1$ has been defined in \eqref{def: D0x}. Putting all these estimates together proves
\begin{equation}\label{estimation mu(Xf)}
	\mu^N(Xf) \le \frac{4A}{\alpha} + \frac{3}{A\alpha}\mathfrak{D}_\ell(\frak{f})+\frac{2}{A\alpha}\mathfrak{D}_0^1(\frak{f}).
\end{equation}
But by definition of the total Dirichlet form $\mathfrak{D}_N(\frak{f})$ in \eqref{def: DN}, we clearly have
\begin{equation*}
	\mathfrak{D}_0^1(\frak{f})\le \mathfrak{D}_N(\frak{f})\qquad\mbox{ and }\qquad \frak{D}_\ell (\frak{f})=\frac{N^\theta}{\kappa }\frac{\kappa}{N^\theta}\frak{D}_\ell(\frak{f})\le \frac{N^\theta}{\kappa}\frak{D}_N(\frak{f}),
\end{equation*}
so finally
\begin{equation}\label{estimation mu(Xf)2}
	\mu^N(Xf) \le \frac{4A}{\alpha} + \frac{3N^\theta +2\kappa}{\kappa A\alpha}\mathfrak{D}_N(\frak{f}).
\end{equation}
Since the conditional expectation $\frak{f}$ is an average and the Dirichlet form $\mathfrak{D}_N$ is convex, we have $\mathfrak{D}_N(\frak{f})\le\mathfrak{D}_N(f)$. Using this, together with \eqref{estimation mu(Xf)2} and the result of Proposition \ref{prop: estimate Dirichlet form} (see  \eqref{sup à estimer boundary}) yields that the supremum is bounded by
\begin{equation*}
	\sup_f\left\lbrace\frac{4A}{\alpha} + \left(\frac{3N^\theta +2\kappa}{\kappa A\alpha}-\frac{N}{4\gamma}\right)\mathfrak{D}_N(f) +\frac{C_5}{\gamma}\right\rbrace.
\end{equation*}
Choosing
\begin{equation*}
	A=\frac{4\gamma}{\kappa\alpha}\frac{3N^\theta +2\kappa}{N}
\end{equation*}
removes the dependence with respect to $f$ and we deduce that \eqref{sup à estimer boundary} is bounded from above by
\begin{equation*}
	\frac{16\gamma T}{\kappa\alpha^2}\frac{3N^\theta +2\kappa}{N}+\frac{C_5T}{\gamma}.
\end{equation*}
As we chose $\theta <1$, it suffices to make $N$ go to $+\infty$ before $\gamma$ to deduce the result. The proof for the replacement on the right boundary follows the exact same steps, and proves Lemma \ref{lemma:replacement_boundary}. \hfill$\square$

\section{Replacement lemma in the bulk: Proof of Lemma \ref{lemma: replacement bulk}}
\label{sec: Replacement lemma bulk}

\subsection{Strategy}

Let us now turn to the proof of the main replacement lemma. Although the replacement lemma in the bulk follows the classical one-block and two-blocks estimates, the lack of translation invariance in the system and the fact that the stationary state is not product induce some technical challenges. In order to handle this problem, we repeatedly make use of Proposition \ref{prop:localequilibrium} stating that our reference measure is locally close to a grand-canonical state, which  is translation invariant. This allows us to reduce the present one-block estimate to the one of \cite{blondel2020hydrodynamic}, and together with the decorrelation estimate of Corollary \ref{cor:2BEdec}, we are also able to prove a two-blocks estimate.

Let us introduce another scaling parameter $\ell$ which will act as an intermediary between the microscopic and the macroscopic scales,  it has to be seen as a parameter smaller than $\varepsilon N$. If we add and subtract the quantity
\begin{equation*}
	\int_0^t \varphi (s)\Big( h_x^\ell (s)-\act\big( \eta_x^\ell (s)\big)\Big)\diff s,
\end{equation*}
where
\begin{equation}
	\label{def:hxell,etaxell}
	\eta_x^\ell = \frac{1}{|\Lambda_x^\ell |}\sum_{y\in\Lambda_x^\ell}\eta_y\qquad\mbox{ and }\qquad h_x^\ell (\eta ):= \frac{1}{|\Lambda_x^{{\ell -1}}|}\sum_{y\in \Lambda_x^{{\ell-1}}} h_y(\eta)
\end{equation}
inside the absolute value of \eqref{expectation replacement bulk}, then the triangle inequality allows us to reduce the proof of the replacement Lemma \ref{lemma: replacement bulk} to three steps, each one consisting in proving that one of the following expressions vanishes:
\begin{align}
	&\sup_{x\in\Lambda_N}\E_{\nu_0^N} \left[\bigg| \int_0^t \varphi(s) \big( h_x(s ) - h_x^\ell (s)\big)\diff s\bigg|\right] ,\label{step 1}\\
	&\sup_{x\in\Lambda_N}\E_{\nu_0^N} \left[\bigg|\int_0^t \varphi(s)\Big( h_x^\ell (s) -\act\big(\eta_x^\ell (s)\big)\Big)\diff s\bigg|\right] ,\label{step 2}\\
	&\sup_{x\in\Lambda_N}\E_{\nu_0^N} \left[ \bigg|\int_0^t\varphi (s) \Big( \act\big(\eta_x^\ell (s)\big) - \act\big( \eta_x^{\varepsilon N}(s)\big)\Big)\diff s\bigg|\right]. \label{step 3}
\end{align}
The first step consists in showing that we can replace each $h_x$ by its average $h_x^\ell$ over the box $\Lambda_x^{\ell -1}$ containing $x$. This is the purpose of Lemma \ref{lemma:firststep_bulk} stated and proved in the next section.

The second step consists in proving that we can replace the empirical average $h_x^\ell$ over a large microscopic box, that is of size $\ell$ independent of $N$, by the expected value of $h_x$ under the grand-canonical measure with density $\eta_x^\ell$, namely $\act(\eta_x^\ell)$. This is the content of the \emph{one-block estimate} given in Lemma \ref{lemma:1BE} below.

The third and last step consists in replacing $\act (\eta_x^\ell)$ by $\act(\eta_x^{\varepsilon N})$. Using the fact that the function $\act$ is 4-Lipschitz on $[\frac12,1]$, it is enough to show that we can replace $\eta_x^\ell$ by $\eta_x^{\varepsilon N}$. In other words, we prove that the density of particles over large microscopic boxes (of size $\ell$) is close to the density of particles over small macroscopic boxes (of size $\varepsilon N$). This is the aim of the \emph{two-blocks estimate} given in Lemma \ref{lemma:2BE} below.

To sum up, the replacements we make are the following.

\begin{figure}[!h]
    \centering
    \begin{tikzpicture}
    \draw (0,0) node[] {$h_x(\eta )$};
    \draw (3,0) node[] {$h_x^\ell (\eta )$};
    \draw (6,0) node[] {$\mathfrak{a}(\eta_x^\ell )$};
    \draw (9.1,0) node[] {$\mathfrak{a}(\eta_x^{\varepsilon N})$};
    \draw [->,thick] (.5,0)--(2.5,0);
    \draw [->,thick] (3.5,0)--(5.5,0);
    \draw [->,thick] (6.5,0)--(8.5,0);
    \draw (1.5,0) node[anchor=south] {Lemma \ref{lemma:firststep_bulk}};
    \draw (4.5,0) node[anchor=south] {Lemma \ref{lemma:1BE}};
    \draw (7.5,0) node[anchor=south] {Lemma \ref{lemma:2BE}};
    \end{tikzpicture}
\end{figure}

\subsection{First step}

\begin{lemma}
	\label{lemma:firststep_bulk}
	For all $t\in [0,T]$ and for any continuous function $\varphi : [0,T]\longrightarrow\R$, we have
	\begin{equation}\label{eq:firststep_bulk}
		\limsup_{\ell\to +\infty}\limsup_{N\to +\infty}\sup_{x\in\Lambda_N}\E_{\nu_0^N} \left[\bigg| \int_0^t \varphi(s) \big( h_x(s ) - h_x^\ell (s)\big)\diff s\bigg|\right] =0.
	\end{equation}
\end{lemma}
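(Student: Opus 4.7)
The plan is to reduce the statement to estimating a Dirichlet form, following the standard entropy method, while exploiting the gradient condition of the FEP to turn the local difference $h_x - h_x^\ell$ into a controlled linear combination of instantaneous currents. Using $j_{y,y+1} = h_y - h_{y+1}$ and a discrete summation by parts, I would first establish the deterministic identity
\[
(2\ell-1)(h_x - h_x^\ell)(\eta) = \sum_{z = 1}^{N-2} \alpha_z^{x,\ell}\, j_{z,z+1}(\eta),
\]
with explicit coefficients $|\alpha_z^{x,\ell}| \leqslant \ell$ supported on an $O(\ell)$-size window around $x$. The crucial point is that only \emph{bulk} currents $j_{z,z+1}$ with $z \in \integers{1}{N-2}$ appear in this decomposition, even when $x$ is close to a boundary (the boundary currents $j_{0,1}$ and $j_{N-1,N}$ never enter because the expansion of $h_x - h_y$ never crosses the reservoir), which will make the whole argument uniform in $x \in \Lambda_N$.

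Next, I would apply the entropy inequality with respect to the reference measure $\mu^N$, use Lemma \ref{lem:boundentropy} to control $H(\nu_0^N \mid \mu^N)/(\gamma N)$ by $C_4/\gamma$, and invoke the Feynman--Kac upper bound together with Proposition \ref{prop: estimate Dirichlet form} to reduce the problem to controlling, for some $\gamma > 0$,
\[
\sup_f \Big\{ \varphi(s)\,\mu^N\big((h_x - h_x^\ell) f\big) - \frac{N}{4\gamma}\mathfrak{D}_N(f) \Big\} + \frac{\mathcal{O}(1)}{\gamma},
\]
the supremum being over densities $f$ with respect to $\mu^N$. After plugging in the gradient decomposition, I would estimate each $\mu^N(j_{z,z+1} f)$ individually: a symmetrization $\eta \leftrightarrow \eta^{z,z+1}$ combined with the quasi-reversibility relation of Lemma \ref{lem:quasi-reversibility} (costing only a factor $1+\mathcal{O}(1/N)$) and Young's inequality gives, for any $A > 0$,
\[
|\mu^N(j_{z,z+1}f)| \;\leqslant\; C A + \frac{1}{4A}\mathfrak{D}_0^z(f) + \frac{C'}{N}.
\]
Summing against the $\alpha_z^{x,\ell}$'s and using $\sum_z \mathfrak{D}_0^z(f) = \mathfrak{D}_0(f) \leqslant \mathfrak{D}_N(f)$ yields the key uniform estimate $|\mu^N((h_x-h_x^\ell)f)| \leqslant \mathcal{O}(\ell A) + \tfrac{1}{8A}\mathfrak{D}_N(f) + \mathcal{O}(1/N)$.

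Finally, I would choose $A$ of order $\gamma/N$ so that the Dirichlet form contribution in $|\mu^N((h_x - h_x^\ell)f)|$ is absorbed by the $-\frac{N}{4\gamma}\mathfrak{D}_N(f)$ penalty, and then tune $\gamma$ (for instance $\gamma = \sqrt{N/\ell}$) to balance the entropy cost $\mathcal{O}(1/\gamma)$ against the residual $\mathcal{O}(\gamma\ell/N)$. This produces an overall bound of order $\sqrt{\ell/N}$ on the expectation in \eqref{eq:firststep_bulk}, uniform in $x \in \Lambda_N$ and valid for any $\theta \in \R$ (since the penalty side uses the full Dirichlet form $\mathfrak{D}_N$, and the non-trivial part of the bound only involves bulk pieces $\mathfrak{D}_0^z$); this vanishes as $N \to \infty$ for any fixed $\ell$, which is exactly the claim. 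The main obstacle is precisely that $\mu^N$ is not a product measure, so that standard reversibility-based symmetrization of $\mu^N(j_{z,z+1} f)$ fails outright; the quasi-reversibility estimate of Lemma \ref{lem:quasi-reversibility} is tailored to bypass this issue, and its $\mathcal{O}(1/N)$ defect is harmless once paired with the bound $\mu^N(f^{z,z+1}) = \mathcal{O}(1)$ coming from $f$ being a density and the uniform bound on the ratios $\mu^N(\eta^{z,z+1})/\mu^N(\eta)$.
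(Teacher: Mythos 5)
Your proposal is correct and follows essentially the same route as the paper's proof: telescope $h_x - h_x^\ell$ into bulk currents via the gradient condition, apply the entropy inequality against $\mu^N$ plus Feynman--Kac (controlled by Lemma \ref{lem:boundentropy} and Proposition \ref{prop: estimate Dirichlet form}), symmetrize each $\mu^N(j_{z,z+1}f)$ using the quasi-reversibility of Lemma \ref{lem:quasi-reversibility}, apply Young's inequality, and take $A\sim\gamma/N$ so that the Dirichlet form is absorbed. The only differences are cosmetic: the paper keeps $\gamma$ fixed and sends $N\to\infty$, then $\ell\to\infty$, then $\gamma\to\infty$, rather than coupling $\gamma=\sqrt{N/\ell}$; and your explicit observation that only bulk currents $j_{z,z+1}$, $z\in\integers{1}{N-2}$, ever appear in the decomposition (so the bound is genuinely uniform in $x\in\Lambda_N$ and valid for every $\theta$) is implicit in the paper rather than spelled out.
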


\begin{proof}
	Fix $x\in \Lambda_N$. Using the same method used before coupling the entropy inequality, Jensen's inequality, together with Feynman-Kac formula and the entropy bound of Lemma \ref{lem:boundentropy}, we see that the expectation in \eqref{eq:firststep_bulk} can be bounded above by
	\begin{equation}\label{firststep_supàestimer}
		\frac{C_4}{\gamma} + \int_0^t \sup_f \left\lbrace \varphi (s)\mu^N\big( (h_x-h_x^\ell) f\big) +\frac N\gamma \mu^N(\sqrt{f}\mathcal{L}_N\sqrt{f})\right\rbrace\diff s
	\end{equation}
	for all $\gamma >0$, where the supremum is taken over all density fonctions $f$ with respect to $\mu^N$. Note that
	\begin{align*}
		h_x(\eta )-h_x^\ell (\eta ) = \frac{1}{|\Lambda_x^{\ell -1}|} \sum_{y\in\Lambda_x^{\ell -1}} \big( h_x(\eta )-h_y (\eta )\big) & =  \frac{1}{|\Lambda_x^{\ell -1}|} \sum_{y\in\Lambda_x^{\ell -1}}  \sum_{z=y}^{x-1} \big( h_{z+1}(\eta )-h_z(\eta )\big)\\
		& = \frac{1}{|\Lambda_x^{\ell -1}|} \sum_{y\in\Lambda_x^{\ell -1}}  \sum_{z=y}^{x-1} -j_{z,z+1}(\eta )\\
		& =\frac{1}{|\Lambda_x^{\ell -1}|} \sum_{y\in\Lambda_x^{\ell -1}} \sum_{z=y}^{x-1} c_{z,z+1}(\eta )(\eta_{z+1}-\eta_z).
	\end{align*}
	recalling the gradient conditions \eqref{eq:gradientcondition} and the definition of the instantaneous current \eqref{def:current}. Thus, the term $I:=\varphi (s)\mu^N\big( (h_x-h_x^\ell )f\big)$ rewrites
	\begin{align*}
		I & = \frac{\varphi (s)}{|\Lambda_x^{\ell -1}|} \sum_{y\in\Lambda_x^{\ell -1}} \sum_{z=y}^{x-1} \int_{\Omega_N}c_{z,z+1}(\eta )(\eta_{z+1}-\eta_z)f(\eta )\diff\mu^N (\eta )\\
		& = \frac{\varphi (s)}{2|\Lambda_x^{\ell -1}|} \sum_{y\in\Lambda_x^{\ell -1}} \sum_{z=y}^{x-1}\int_{\Omega_N} c_{z,z+1}(\eta )(\eta_{z+1}-\eta_z)f(\eta )\diff\mu^N(\eta )\\
		& \qquad +\frac{\varphi (s)}{2|\Lambda_x^{\ell -1}|} \sum_{y\in\Lambda_x^{\ell -1}} \sum_{z=y}^{x-1}\int_{\Omega_N} c_{z,z+1}(\eta^{z,z+1})(\eta_z-\eta_{z+1})f(\eta^{z,z+1})\frac{\mu^N(\eta^{z,z+1})}{\mu^N(\eta )}\diff\mu^N(\eta )
	\end{align*}
	where to obtain the last line, we wrote the integral as twice its half, and we performed the change of variable $\eta\rightsquigarrow \eta^{z,z+1}$ in the second half. These integrals are actually integrals over the set $\Omega_N^z = \big\lbrace \eta\in\mathcal{E}_N\; :\; c_{z,z+1}(\eta )\neq 0\big\rbrace$, for which we have the ``quasi-reversibility'' relation \eqref{eq:quasi-reversibility} given in Lemma \ref{lem:quasi-reversibility} that reads
	\begin{equation*}
		c_{z,z+1}(\eta^{z,z+1})\frac{\mu^N(\eta^{z,z+1})}{\mu^N(\eta )}= c_{z,z+1}(\eta )+ \mathcal{O}\left(\frac 1N\right) .
	\end{equation*}
	Injecting this in the last expression of $I$, using the fact that there is at most one particle per site and that $f$ is a density with respect to $\mu^N$, we can see that $I$ writes under the form
	\begin{equation*}
		I=\frac{\varphi (s)}{2|\Lambda_x^{\ell -1}|} \sum_{y\in\Lambda_x^{\ell -1}} \sum_{z=y}^{x-1}\int_{\Omega_N^z}c_{z,z+1}(\eta )(\eta_{z+1}-\eta_z)\big[f(\eta )-f(\eta^{z,z+1})\big]\diff\mu^N(\eta ) +\mathcal{O}\left(\frac{\ell}{N}\right)
	\end{equation*}
	where it is important to mention that the error term depends neither on the function $f$, nor on the coordinate $z$
	. Now, to bound the remaining sum, let us use Young's inequality which for $A>0$ writes
	\begin{align*}
		\varphi (s)(\eta_{z+1}-\eta_z)\big[f(\eta )-f(\eta^{z,z+1})\big] & \leq \frac{1}{2A}\big[\sqrt{f (\eta )}-\sqrt{f (\eta^{z,z+1})}\big]^2 \\
		& \qquad\qquad+ \frac A2 \varphi (s)^2 \underbrace{(\eta_{z+1}-\eta_z)^2}_{=1} \big[\sqrt{f(\eta )}+\sqrt{f(\eta^{z,z+1})}\big]^2
	\end{align*}
	so that we get that this sum is bounded by
	\begin{multline*}
		\frac{1}{4A|\Lambda_x^{\ell -1}|} \sum_{y\in\Lambda_x^{\ell -1}}\sum_{z=y}^{x-1} \int_{\Omega_N}c_{z,z+1}(\eta )\big[ \sqrt{f(\eta^{z,z+1})}-\sqrt{f(\eta )}\big]^2\diff\mu^N(\eta )\\
		+\frac{A\varphi (s)^2}{4|\Lambda_x^{\ell -1}|}\sum_{y\in\Lambda_x^{\ell-1}}\sum_{z=y}^{x-1} \int_{\Omega_N}c_{z,z+1}(\eta )\big[\sqrt{f(\eta^{z,z+1})}+\sqrt{f(\eta )}\big]^2\diff\mu^N(\eta ).
	\end{multline*}
	The inequalities $c_{z,z+1}(\eta )\le 1$ and $(a+b)^2\le 2(a^2+b^2)$, together with the fact that $f$ is a density show that the second term in this expression is bounded above by $\tilde{C}A\ell\|\varphi\|_\infty^2$ where $\tilde{C}$ is a positive constant. In the first term, the integral is exactly equal to $\mathfrak{D}_0^z(f)$ defined in \eqref{def: D0x}, so the sum over $z$ is a piece of the total Dirichlet form by which it can be bounded. Hence we get that the first term is bounded above by $\mathfrak{D}_N(f)/4A$. To sum up, we have proved that
	\begin{equation*}
		I\le \frac{1}{4A}\mathfrak{D}_N(f)+ \tilde{C}A\ell\|\varphi\|_\infty^2 + \mathcal{O}\left(\frac{\ell}{N}\right),
	\end{equation*}
	and this holds for any $A>0$. Injecting this in \eqref{firststep_supàestimer} and using the result of Proposition \ref{prop: estimate Dirichlet form} we see that it can be bounded above by
	\begin{equation*}
		\frac{C_4}{\gamma} + t\sup_f\left\lbrace\frac{1}{4A}\mathfrak{D}_N(f)+\tilde{C}A\ell \|\varphi\|_\infty^2 + \mathcal{O}\left(\frac{\ell}{N}\right) - \frac{N}{4\gamma}\mathfrak{D}_N(f)+ \frac{C_5}{\gamma}\right\rbrace .
	\end{equation*}
	Making the choice $A = \frac{\gamma}{N}$ removes the dependence with respect to the function $f$, so that we have the bound
	\begin{equation*}
		\E_{\nu_0^N}\left[\left| \int_0^t\varphi (s)\big( h_x(s)-h_x^\ell (s)\big)\diff s\right|\right] \le \frac{C_4}{\gamma} + \frac{\tilde{C}T\ell\gamma\|\varphi\|_\infty^2}{N}+\mathcal{O}\left(\frac{\ell}{N}\right) + \frac{TC_5}{\gamma }
	\end{equation*}
	and this holds uniformly in $x\in\Lambda_N$ so we can take the supremum on the left hand side of this inequality. Letting $N$, then $\ell$ and finally $\gamma$ go to infinity, we obtain the desired result.
\end{proof}

\subsection{One-block estimate}

Using the triangle inequality and the fact that the function $\varphi$ is bounded, if we want to prove that \eqref{step 2} vanishes as $N$ and $\ell$ go to $+\infty$, it is sufficient to prove the following result.

\begin{lemma}[One-block estimate]
	\label{lemma:1BE}
	For any $t\in [0,T]$, we have that
	\begin{equation}
		\limsup_{\ell\to +\infty}\limsup_{N\to +\infty}\sup_{x\in\Lambda_N}\E_{\nu_0^N}\left[\int_0^t\big| V_x^\ell (s)\big|\diff s\right] =0,
	\end{equation}
	where $V_x^\ell$ is defined by
	\begin{equation}
		\label{def:Vxell}
		V_x^\ell(\eta )= h_x^\ell (\eta )-\act (\eta_x^\ell).
	\end{equation}
\end{lemma}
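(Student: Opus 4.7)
The plan is to follow the classical one-block scheme of Guo--Papanicolaou--Varadhan, adapted to accommodate the non-product, non-translation-invariant reference measure $\mu^N$ via its local equilibrium property, Proposition \ref{prop:localequilibrium}. First I would reduce to a static variational problem. Starting from the entropy inequality between $\nu_0^N$ and $\mu^N$, using the bound $H(\nu_0^N|\mu^N)\le C_4N$ from Lemma \ref{lem:boundentropy}, the splitting $e^{|z|}\le e^z+e^{-z}$, the Feynman--Kac formula and Proposition \ref{prop: estimate Dirichlet form} to bound $-\mu^N(\sqrt f\,\mathcal{L}_N\sqrt f)$ from below by $\tfrac14 \mathfrak{D}_N(f)-C_5/N$, the problem reduces to proving that, for every $\gamma>0$,
\begin{equation*}
\limsup_{\ell\to\infty}\limsup_{N\to\infty}\sup_{x\in\Lambda_N}\sup_{f}\Big\{\mu^N\big(|V_x^\ell|\,f\big)-\tfrac{N}{4\gamma}\mathfrak{D}_N(f)\Big\}\le \frac{C}{\gamma},
\end{equation*}
where $f$ ranges over densities with respect to $\mu^N$. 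Letting $\gamma\to\infty$ at the end concludes.

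Next I would localize. Since $V_x^\ell$ is $\eta_{|\Lambda_x^\ell}$-measurable, I replace $f$ by its conditional expectation $\bar f_\ell(\sigma):=\mu^N(f\mid \eta_{|\Lambda_x^\ell}=\sigma)$, so that $\mu^N(V_x^\ell f)=\mu^N_{|\Lambda_x^\ell}(V_x^\ell\,\bar f_\ell)$. By convexity of the edge Dirichlet forms $\mathfrak{D}_0^y$, those over edges $\{y,y+1\}$ whose four-site stencil $\{y-1,y,y+1,y+2\}$ lies strictly inside $\Lambda_x^\ell$ only decrease under conditioning; the contributions of the $O(1)$ remaining edges are discarded at a cost of order $\ell/N$ after multiplication by the $N/\gamma$ prefactor, hence negligible as $N\to\infty$ at fixed $\ell$. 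The local equilibrium estimate of Proposition \ref{prop:localequilibrium} then allows me to replace $\mu^N_{|\Lambda_x^\ell}$ by the translation-invariant $\pi_{\rho^\star}|_{\llbracket 1,2\ell+1\rrbracket}$ with $\rho^\star:=\varrho(x/N)$ at a cost $O(C_1(\ell)/N)$ per configuration, which vanishes as $N\to\infty$ since the local state space has bounded cardinality $2^{2\ell+1}$. After this transfer, the restricted Dirichlet form coincides with the bulk FEP Dirichlet form on a finite box with respect to $\pi_{\rho^\star}$.

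At this point the problem is exactly the translation-invariant one-block estimate already established in the periodic setting of \cite{blondel2020hydrodynamic}: the Dirichlet penalty forces any admissible density $g$ to become asymptotically constant on each canonical hypersurface of $\mathcal{E}_{\llbracket 1,2\ell+1\rrbracket}$ indexed by the number of particles, so the supremum collapses to $\sup_{p}\pi_{\rho^\star}^{\ell,p}(|V_0^\ell|)$ under the canonical FEP measure with $p$ particles on $2\ell+1$ sites. By the equivalence of ensembles for the FEP together with the continuity of $\act$, this quantity vanishes uniformly in $p$ as $\ell\to\infty$, because $h_0^\ell$ concentrates around $\act(p/(2\ell+1))$ and $\eta_0^\ell$ concentrates around $p/(2\ell+1)$. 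The main obstacle in this program is the localization step: because the kinetic constraint couples four consecutive sites and $\mu^N$ is not translation-invariant, the conditional expectation operator does not preserve $\mathfrak{D}_N$ in a clean way, and one must carefully identify which interior edge contributions survive and bound the discarded boundary pieces, before invoking Proposition \ref{prop:localequilibrium} to move the analysis to the grand-canonical setting where the classical one-block arguments of \cite{blondel2020hydrodynamic} apply.
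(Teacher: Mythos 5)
Your proposal follows essentially the same route as the paper's proof: Feynman--Kac plus the entropy bound of Lemma \ref{lem:boundentropy} and the Dirichlet estimate of Proposition \ref{prop: estimate Dirichlet form} to reduce to a static variational problem, localization via the conditional expectation $f_x^\ell$ and a restricted box Dirichlet form, transfer from $\widehat\mu_x^\ell$ to $\widehat\pi_{\varrho(x/N)}^\ell$ by Proposition \ref{prop:localequilibrium}, and finally the translation-invariant one-block argument of \cite{blondel2020hydrodynamic}. One remark deserves correction: dropping the $O(1)$ boundary edges from the restricted Dirichlet form $\mathfrak{D}_x^\ell$ incurs \emph{no} cost -- one simply uses a slightly smaller sum of pieces $\mathfrak{D}_0^y(f)$, each still dominated by $\mathfrak{D}_N(f)$ after convexity, and the remaining interior edges still span the box so the concentration argument on canonical hypersurfaces goes through; moreover your bookkeeping ``$\ell/N$ times $N/\gamma$ is negligible as $N\to\infty$'' is off, since that product is $\ell/\gamma$, which does not vanish at fixed $\ell$.
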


\begin{remark}\upshape
	More generally, we expect that this one-block estimate should hold for any local function $\psi :\Omega_N\longrightarrow \R$, stating that an empirical average of $\psi$ over a large microscopic box can be replaced by the average of $\psi$ under the grand-canonical measure associated to the empirical density over this box. Though, we state it directly for the local function $h_x$ (defined in \eqref{def:hx}) as it is sufficient for our purpose.
\end{remark}

\begin{proof}
	Fix $x\in\Lambda_N$ and recall the definition of the box $\Lambda_x^\ell$ in \eqref{def:lambdaxell}. Making use of the Feynman-Kac formula as before, and using Lemma \ref{lem:boundentropy} and Proposition \ref{prop: estimate Dirichlet form} we can bound the expectation of the statement by
	\begin{equation}\label{1BEsup1}
		\frac{C_4}{\gamma} + t\sup_f\left\lbrace \mu^N\big( |V_x^\ell |f\big) - \frac{N}{4\gamma}\mathfrak{D}_N(f)\right\rbrace+ \frac{tC_5}{\gamma}
	\end{equation}
	for any $\gamma >0$, where the supremum is taken over all density functions $f$ with respect to the probability measure $\mu^N$. Note that for any $x\in\Lambda_N$, both quantities $h_x^\ell$ and $\eta_x^\ell$ defined in \eqref{def:hxell,etaxell} depend only on the coordinates in the box $\Lambda_x^\ell$. Let us introduce two additional notations:
	\begin{itemize}
		\item We denote by $\widehat{\mu}_x^\ell$ the restriction of the measure $\mu^N$ to the box $\Lambda_x^\ell$ : 
		\begin{equation*}
			\forall \sigma\in\lbrace 0,1\rbrace^{\Lambda_x^\ell },\qquad \widehat{\mu}_x^\ell (\sigma )=\mu^N(\eta_{|\Lambda_x^\ell}=\sigma ).
		\end{equation*}

		\item If $g:\lbrace 0,1\rbrace^{\Lambda_x^\ell}\longrightarrow [0,+\infty ]$ is a density with respect to $\widehat{\mu}_x^\ell$, then we define the Dirichlet form on the box $\Lambda_x^\ell$ by
		\begin{equation}\label{def: DirForm on Lambda_x^ell}
			\frak{D}_x^\ell (g)=\sum_{\lbrace y,y+1\rbrace\subset\Lambda_x^\ell}I_y^\ell (g)
		\end{equation}
		where
		\begin{equation}\label{def:Iyell}
			I_y^\ell (g)=\int_{\lbrace 0,1\rbrace^{\Lambda_x^\ell}}c_{y,y+1}(\sigma )\big[\sqrt{g(\sigma^{y,y+1})}-\sqrt{g(\sigma)}\big]^2\diff\widehat{\mu}_x^\ell (\sigma).
		\end{equation}
	\end{itemize}
	Note in particular that if $f:\Omega_N\longrightarrow [0,+\infty ]$ is a density with respect to $\mu^N$, and $f_x^\ell = \mu^N(f|\Lambda_x^\ell )$ denotes its conditional expectation with respect to the coordinates in $\Lambda_x^\ell$, then we have that
	\begin{equation*}
		I_y^\ell (f_x^\ell)=\mathfrak{D}_0^y(f_x^\ell)
	\end{equation*}
	where $\mathfrak{D}_0^y$ has been defined in \eqref{def: D0x}, because $f_x^\ell$ can be seen either as a function on $\lbrace 0,1\rbrace^{\Lambda_x^\ell}$, or on $\Omega_N$. As a consequence, using the convexity of each $\mathfrak{D}_0^y$ and the fact that $f_x^\ell$ is a conditional expectation, 
	\begin{equation*}
		\frak{D}_x^\ell (f_x^\ell)=\sum_{\lbrace y,y+1\rbrace\subset\Lambda_x^\ell}\mathfrak{D}_0^y (f_x^\ell) \le \sum_{\lbrace y,y+1\rbrace\subset\Lambda_x^\ell}\mathfrak{D}_0^y (f)
	\end{equation*}
	and on the right hand side of the inequality we recognize a piece of the total Dirichlet form, so we can bound it above by $\mathfrak{D}_N(f)$ to obtain that
	\begin{equation}\label{1BEestimationDir}
		\mathfrak{D}_x^\ell (f_x^\ell) \le \mathfrak{D}_N(f).
	\end{equation}
	Note that this bound is extremely crude, since we are bounding $\mathcal{O}(\ell )$ pieces of the Dirichlet form by the total ($N$ pieces) Dirichlet form. Nevertheless, this is sufficient for our purpose here, and is much more convenient in a non translation invariant setting. Each function $V_x^\ell$ depends only on the coordinates inside $\Lambda_x^\ell$, so we have that
	\begin{equation*}
		\mu^N\big( |V_x^\ell |f\big) = \mu^N\big( |V_x^\ell |f_x^\ell\big) = \widehat{\mu}_x^\ell\big( |V_x^\ell |f_x^\ell\big) .
	\end{equation*}
	Using this together with \eqref{1BEestimationDir}, we can bound the supremum in \eqref{1BEsup1} by
	\begin{equation}
		\sup_f\left\lbrace \widehat{\mu}_x^\ell\big( |V_x^\ell |f_x^\ell\big) -\frac{N}{4\gamma}\mathfrak{D}_x^\ell (f_x^\ell)\right\rbrace \le \sup_{x\in\Lambda_N}\sup_g\left\lbrace \widehat{\mu}_x^\ell\big( |V_x^\ell |g\big) - \frac{N}{4\gamma}\mathfrak{D}_x^\ell (g)\right\rbrace
	\end{equation}
	where this time, the supremum is taken over all density functions $g:\lbrace 0,1\rbrace^{\Lambda_x^\ell}\longrightarrow [0,+\infty ]$ with respect to the measure $\widehat{\mu}_x^\ell$. As the left hand term inside the supremum is non-negative and bounded uniformly in $x$, say by some constant $K>0$, the regime where $\mathfrak{D}_x^\ell (g)$ is larger than $\frac{4K\gamma}{N}$ does not contribute to the supremum and we can restrict it to densities $g$ satisfying $\mathfrak{D}_x^\ell (g)\le \frac{4K\gamma}{N}$. We are thus left to estimate
	\begin{equation}\label{1BEsup2}
		\sup_{x\in\Lambda_N} \sup_{g : \mathfrak{D}_x^\ell (g)\le\frac{4K\gamma}{N}} \widehat{\mu}_x^\ell \big( |V_x^\ell |g\big)
	\end{equation}
	since the Dirichlet form is non-negative. At this stage, we have a uniform bound with respect to $x$ so we can take the supremum over $x\in\Lambda_N$ of the expectations on the left hand side of the inequality. If $x\in\Lambda_N$ is written $x=\lfloor uN\rfloor$ for some $u\in (\frac1N ,1)$, using Proposition \ref{prop:localequilibrium} we have that
	\begin{equation}\label{1BEeqmeasures}
		\forall \sigma\in\lbrace 0,1\rbrace^{\Lambda_x^\ell}, \qquad \big|\widehat{\mu}_x^\ell (\sigma ) - \widehat{\pi}_{\varrho (u)}^\ell (\sigma )\big| \leq \frac{C_1}{N}
	\end{equation}
	where $\varrho (u)$ has been defined in \eqref{def:varrho} and $\widehat{\pi}_{\varrho (u)}^\ell$ is the restriction of $\pi_{\varrho (u)}$ to any box of size $2\ell +1$. If we define a new Dirichlet form with respect to $\widehat{\pi}_{\varrho (u)}^\ell$ by
	\begin{equation*}
		\tilde{\mathfrak{D}}_u^\ell (g)=\sum_{\lbrace y,y+1\rbrace\subset\integers{-\ell}{\ell}} \int_{\lbrace 0,1\rbrace^{2\ell +1}} c_{y,y+1}(\sigma )\big[\sqrt{g(\sigma^{y,y+1})}-\sqrt{g(\sigma )}\big]^2\diff\widehat{\pi}_{\varrho (u)}^\ell (\sigma )
	\end{equation*}
	then inequality \eqref{1BEeqmeasures} together with the fact that $\mathfrak{D}_x^\ell (g)\leq \frac{4K\gamma}{N}$ implies that $\tilde{\mathfrak{D}}_u^\ell (g)\le \frac{K'}{N}$ for another constant $K'>0$ that depends only on $\alpha ,\beta ,\gamma$ and $\ell$. As a consequence, if we want to estimate \eqref{1BEsup2}, it suffices to estimate
	\begin{equation}
		\sup_{u\in [0,1]}\sup_{g : \tilde{\mathfrak{D}}_u^\ell (g)\le \frac{K'}{N}} \widehat{\pi}_{\varrho (u)}^\ell \big( |\tilde{V}_0^\ell |g\big)
	\end{equation}
	where $\tilde{V}_0^\ell : \lbrace 0,1\rbrace^{2\ell +1}\longrightarrow \R$ is the function defined by $\tilde{V}_0^\ell (\eta )=h_0^\ell (\eta )-\act(\eta_0^\ell)$. As $\varrho (u)$ is bounded away from $\frac12$ and $1$, we can at this stage follow the steps of the proof of \cite[Lemma 7.1]{blondel2020hydrodynamic} to conclude the proof.
\end{proof}

\subsection{Two-blocks estimate}

The two-blocks estimate hereafter states that the density of particles over large microscopic boxes and small macroscopic boxes are close. The strategy to prove this result is to show that the density of particles over any two large microscopic boxes, at small macroscopic distance, are close to each other. To do so, we choose those microscopic boxes far enough to be uncorrelated by Corollary \ref{cor:2BEdec}, and use the fact that they are macroscopically close to ensure, by Proposition \ref{prop:localequilibrium}, that the reference measure on them is close to one single grand-canonical state. Thus, the density of particles over these two boxes should not differ much.

\begin{lemma}\label{lemma:2BE}
	For any $t\in [0,T]$, we have
	\begin{equation}
		\limsup_{\ell\to +\infty}\limsup_{\varepsilon\to 0}\limsup_{N\to +\infty}\sup_{x\in\Lambda_N} \E_{\nu_0^N}\left[ \int_0^t \big| \eta_x^\ell (s)-\eta_x^{\varepsilon N}(s)\big|\diff s\right] =0.
	\end{equation}
\end{lemma}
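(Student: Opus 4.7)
\emph{Proof strategy.} The plan is to follow the classical two-blocks strategy, adapted to our setting via the decorrelation estimate of Corollary \ref{cor:2BEdec}, which replaces the traditional ``moving particle'' coupling argument.

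\textbf{Step 1 (Reduction to microscopically far-apart pairs).} First, I would decompose the macroscopic block average as an average of mesoscopic ones: choosing $K = \lfloor |\Lambda_x^{\varepsilon N}|/(2\ell+1)\rfloor$ centers $y_1,\dots,y_K$ in $\Lambda_x^{\varepsilon N}$ such that the boxes $\Lambda_{y_k}^\ell$ form an (almost) partition of $\Lambda_x^{\varepsilon N}$, one has
\[
|\eta_x^\ell - \eta_x^{\varepsilon N}| \leq \frac{1}{K}\sum_{k=1}^K |\eta_x^\ell - \eta_{y_k}^\ell| + \mathcal{O}\Big(\frac{\ell}{\varepsilon N}\Big).
\]
The indices $k$ with $|y_k - x| \leq (\log N)^2$ represent a fraction $\mathcal{O}((\log N)^2 / (\varepsilon N))$ of the sum and contribute negligibly since $|\eta_x^\ell - \eta_{y_k}^\ell| \leq 1$. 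It therefore suffices to show that
\[
\sup_{(x,y):\, (\log N)^2\leq |y-x|\leq \varepsilon N}\E_{\nu_0^N}\left[\int_0^t |\eta_x^\ell(s) - \eta_y^\ell(s)|\diff s\right]
\]
vanishes in the desired triple limit.

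\textbf{Step 2 (Static reduction and product replacement).} As in the proof of Lemma \ref{lemma:1BE}, combining the entropy inequality relative to $\mu^N$, Feynman--Kac, Lemma \ref{lem:boundentropy} and Proposition \ref{prop: estimate Dirichlet form}, the above expectation is controlled, up to a $\mathcal{O}(1/\gamma)$ error, by
\[
t\sup_f\Big\{\mu^N\big(|\eta_x^\ell - \eta_y^\ell| f\big) - \frac{N}{4\gamma}\mathfrak{D}_N(f)\Big\},
\]
where the supremum runs over densities $f$ with respect to $\mu^N$. Since $|\eta_x^\ell - \eta_y^\ell|$ depends only on the coordinates in $\Lambda_x^\ell\cup\Lambda_y^\ell$, conditioning $f$ on these sites produces a density $\bar f$ whose block Dirichlet form is crudely bounded by $\mathfrak{D}_N(f)$ (cf.~\eqref{1BEestimationDir}). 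Since $(\log N)^2\leq |y-x|\leq \varepsilon N$, Corollary \ref{cor:2BEdec} then allows one to replace the marginal of $\mu^N$ on $\Lambda_x^\ell\cup\Lambda_y^\ell$ by the product $\widehat{\pi}_{\varrho(u)}^\ell \otimes \widehat{\pi}_{\varrho(v)}^\ell$ (where $u = x/N$, $v = y/N$), introducing only a $\mathcal{O}(1/N)$ error that is uniform in $\bar f$.

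\textbf{Step 3 (Conclusion).} Under the product measure $\widehat{\pi}_{\varrho(u)}^\ell \otimes \widehat{\pi}_{\varrho(v)}^\ell$, the block averages $\eta_x^\ell$ and $\eta_y^\ell$ become \emph{independent}, with respective means $\varrho(u)$ and $\varrho(v)$. Applying the one-block machinery of Lemma \ref{lemma:1BE} (in its simpler variant for the local function $\eta_x$ in place of $h_x$) to each block separately shows that $\eta_x^\ell \to \varrho(u)$ and $\eta_y^\ell \to \varrho(v)$ in $L^1$, uniformly over densities with bounded block Dirichlet form as $\ell\to\infty$. The triangle inequality together with the Lipschitz continuity of $\varrho$ on $[0,1]$ (see \eqref{def:varrho}), which gives $|\varrho(u)-\varrho(v)|\leq C|u-v|\leq C\varepsilon$, concludes the proof after sending $N\to\infty$, then $\varepsilon\to 0$, then $\ell\to\infty$, and finally $\gamma\to\infty$. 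The principal obstacle lies in this last step: ensuring that the reduction from $\widehat\mu_x^\ell$ to $\widehat{\pi}_{\varrho(u)}^\ell$ via Proposition \ref{prop:localequilibrium}, followed by the law of large numbers under the grand canonical measure on the ergodic component, produces estimates uniform in the spatial coordinate $x\in\Lambda_N$. This is where the non-product, non-translation-invariant nature of $\mu^N$ requires the most careful bookkeeping, but the strategy already developed for Lemma \ref{lemma:1BE} carries over \emph{mutatis mutandis}.
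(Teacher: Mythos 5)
Your strategy breaks down at Step 3, and the gap traces back to Step 2, where you drop the essential ``connecting'' piece of the Dirichlet form. You write that the decorrelation estimate (Corollary \ref{cor:2BEdec}) ``replaces the traditional moving-particle coupling argument,'' but these two ingredients serve different purposes and one cannot substitute for the other. Decorrelation lets you factorize the marginal of $\mu^N$ on $\Lambda_x^\ell\cup\Lambda_y^\ell$ into a product of grand-canonical marginals; the moving-particle argument is what lets you compare the particle \emph{numbers} in the two blocks. If, as in your Step 2, you only keep the sum of nearest-neighbour Dirichlet terms inside each block (the analogue of \eqref{1BEestimationDir}), then the resulting block Dirichlet form is degenerate on the set of pairs $(\sigma,\sigma')$ with any fixed particle counts $(|\sigma|,|\sigma'|)$: a density $g$ supported on configurations with, say, $|\sigma|=2\ell+1$ and $|\sigma'|=\ell$ has zero block Dirichlet form, yet $|\eta_x^\ell-\eta_y^\ell|$ is of order $1$. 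The paper's proof handles this by adding the long-range exchange term $J_{x,y_j}$ to the block Dirichlet form (see \eqref{def:Dxjell}) and then bounding it by $\mathcal{O}(\varepsilon N)\,\mathfrak{D}_N(f)$ via the deterministic path of Lemma \ref{lemma:longjumppath}; this is precisely what couples the particle numbers of the two blocks and cannot be dispensed with.

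The second error, in Step 3, is the assertion that the one-block machinery shows $\eta_x^\ell\to\varrho(u)$ in $L^1$ uniformly over densities with bounded block Dirichlet form. That is not what the one-block estimate says. Lemma \ref{lemma:1BE} replaces $h_x^\ell$ by its grand-canonical average \emph{at the empirical density} $\eta_x^\ell$, i.e.\ by $\act(\eta_x^\ell)$, not by $\act(\varrho(u))$; applying the same reasoning to $\psi=\eta_0$ gives the tautology $\eta_x^\ell\approx\eta_x^\ell$. The empirical block density is a conserved quantity for the within-block dynamics, so no control on it is available from the Dirichlet form alone. If your Step 3 were correct, one could deduce the two-blocks estimate entirely from two applications of the one-block estimate, bypassing the need for a two-blocks estimate at all. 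The actual conclusion, once the exchange term is in place, is the concentration statement \eqref{2BEconcentration}: the product measure \emph{conditioned on the total particle count} $|\sigma|+|\sigma'|=k$ concentrates near the equal split $|\sigma|\approx k/2$. Your Steps 1 and 2 (apart from the missing exchange term) match the paper, but Step 3 as written is invalid and the overall argument does not close.
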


\begin{proof}
	The ideas in the proof of the two-blocks estimate are similar to the ones used in the proof of the one-block estimate so we solely sketch some classical ones, and detail some others. Fix $x\in\Lambda_N$. Using once again the Feynman-Kac formula together with Lemma \ref{lem:boundentropy} and Proposition \ref{prop: estimate Dirichlet form}, we can bound the expectation of the statement by
	\begin{equation}\label{2BEsup1}
		\frac{C_4}{\gamma} + t\sup_f \left\lbrace \mu^N\big( |\eta_x^\ell - \eta_x^{\varepsilon N}|f\big) -\frac{N}{4\gamma}\mathfrak{D}_N(f)\right\rbrace +\frac{tC_5}{\gamma}
	\end{equation}
	where the supremum is taken over all density functions $f$ with respect to $\mu^N$. Let us divide the box of size $2\varepsilon N+1$ relative to $\eta_x^{\varepsilon N}$ into $p=\big\lfloor\frac{2\varepsilon N+1}{2\ell +1}\big\rfloor$ boxes of size $2\ell +1$, plus possibly two leftover blocks whose size is strictly less than $2\ell +1$. It permits to write that
	\begin{equation*}
		\eta_x^\ell - \eta_x^{\varepsilon N} = \frac1p \sum_{j=-p/2}^{p/2}(\eta_x^\ell - \eta_{x+j(2\ell +1)}^\ell)
	\end{equation*}
	plus potentially an error term that we omit since it will vanish as $N$ goes to $+\infty$. We can remove the terms for $|j|<\frac{(\log N)^2}{2\ell +1}$ because they have a contribution of order $\mathcal{O}\big( \frac{(\log N)^2}{\varepsilon N}\big)$ which vanishes with $N$. Doing this truncation, we are in the conditions of validity of Corollary \ref{cor:2BEdec} and this will be useful later on. Instead of the supremum of \eqref{2BEsup1}, we are left to estimate
	\begin{equation}\label{2BEsup2}
		\sup_f \left\lbrace \frac1p \sum_{j\in \mathfrak{T}} \mu^N\big( |\eta_x^\ell -\eta_{x+j(2\ell +1)}^\ell |f\big) - \frac{N}{4\gamma}\mathfrak{D}_N(f)\right\rbrace
	\end{equation}
	where
	\begin{equation*}
		\mathfrak{T}=\mathfrak{T}(N,\varepsilon ,\ell ):=\left\lbrace j\in\Z \; :\; \frac{(\log N)^2}{2\ell +1}\le |j|\le \frac p2\right\rbrace .
	\end{equation*}
	For the sake of simplicity, we define \nota{$y_j=x+j(2\ell +1)$}. The map $\eta\longmapsto |\eta_x^\ell -\eta_{y_j}^\ell|$ depends only on the coordinates in \nota{$\Lambda_{x,j}^\ell := \Lambda_x^\ell\cup\Lambda_{y_j}^\ell$}. If $f$ is a density with respect to $\mu^N$, we denote by $\nota{f_{x,j}^\ell := \mu^N(f|\Lambda_{x,j}^\ell)}$ its conditional expectation with respect to the coordinates in $\Lambda_{x,j}^\ell$. The objective will be, as before, to define a Dirichlet form on $\Lambda_{x,j}^\ell$ and to estimate it by the total Dirichlet form $\mathfrak{D}_N(f)$. We introduce the following notations:
	\begin{itemize}
		\item $\widehat{\mu}_{x,j}^\ell$ is the restriction of the measure $\mu^N$ to the box $\Lambda_{x,j}^\ell$ ;
		\item If $g:\lbrace 0,1\rbrace^{\Lambda_{x,j}^\ell}\longrightarrow\R$ is a density with respect to $\widehat{\mu}_{x,j}^\ell$, then we define the Dirichlet form on $\Lambda_{x,j}^\ell$ by
		\begin{equation}\label{def:Dxjell}
			\mathfrak{D}_{x,j}^\ell (g) := J_{x,y_j}(g)+\sum_{\lbrace z,z+1\rbrace \subset\Lambda_{x,j}^\ell}I_z^\ell (g)
		\end{equation}
		where $I_z^\ell$ has been defined in \eqref{def:Iyell} and $J_{x,y_j}$ is a term that permits to connect the two boxes by allowing a jump from one to the other, while being sure that one never leaves the ergodic component:
		\begin{equation}
			J_{x,y_j}(g)=\int_{\lbrace 0,1\rbrace^{\Lambda_{x,j}^\ell}} (\sigma_{x-1}\sigma_x\sigma_{x+1}+\sigma_{y_j-1}\sigma_{y_j}\sigma_{y_j+1})\big[\sqrt{g(\sigma^{x,y_j})}-\sqrt{g(\sigma )}\big]^2\diff\widehat{\mu}_{x,j}^\ell (\sigma ).
		\end{equation}
	\end{itemize}
	When $f$ is a density with respect to $\mu^N$, our first goal is to estimate
	\begin{equation*}
		\frac1p\sum_{j\in\mathfrak{T}}\mathfrak{D}_{x,j}^\ell (f_{x,j}^\ell)
	\end{equation*}
	by the total Dirichlet form $\mathfrak{D}_N(f)$. If we perform the same proof as in the one-block estimate, we can see that
	\begin{equation}\label{2BEestdir1}
		\frac1p \sum_{j\in\mathfrak{T}}\sum_{\lbrace z,z+1\rbrace\subset \Lambda_{x,j}^\ell}I_z^\ell (f_{x,j}^\ell)\le \mathfrak{D}_N(f)
	\end{equation}
	so we only have to estimate
	\begin{equation*}
		\frac1p\sum_{j\in\mathfrak{T}}J_{x,y_j}(f_{x,j}^\ell).
	\end{equation*}
	We can extend $J_{x,y_j}$ to a term $\bar{J}_{x,y_j}$ on the whole space by the formula
	\begin{equation*}
		\bar{J}_{x,y_j}(f)=\int_{\Omega_N}(\eta_{x-1}\eta_x\eta_{x+1}+\eta_{y_j-1}\eta_{y_j}\eta_{y_j+1})\big[ \sqrt{f(\eta^{x,y_j})}-\sqrt{f(\eta )}\big]^2\diff\mu^N(\eta )
	\end{equation*}
	so that, if we see $f_{x,j}^\ell$ either as a function on $\lbrace 0,1\rbrace^{\Lambda_{x,j}^\ell}$ or as a function on $\Omega_N$, we have the equality $J_{x,y_j}(f_{x,j}^\ell) = \bar{J}_{x,y_j}(f_{x,j}^\ell)$ and the convexity of $\bar{J}_{x,y_j}$ yields the bound
	\begin{equation*}
		J_{x,y_j}(f_{x,j}^\ell)\le \bar{J}_{x,y_j}(f).
	\end{equation*}
	Note that in the expression of $\bar{J}_{x,y_j}$, we integrate only over configurations that are not alternate, and for which the occupation variables at $x$ and $y_j$ are distinct. As a consequence, it is possible to make a particle lying at $x$ go to $y_j$ (or the converse) as explained in Appendix \ref{sec:apptransition}. More precisely, by Lemma \ref{lemma:longjumppath} defining the integer $n(j)= 3|j|(2\ell +1)-4$, we can find a deterministic sequence of sites $(z_k)_{1\le k\le n(j)}$ in $\integers{x}{y_j}$ such that 
	\begin{equation*}
		\eta^{(0)}=\eta,\qquad \eta^{(k+1)}=(\eta^{(k)})^{z_k,z_k+1},\qquad \eta^{(n(j))}=\eta^{x,y_j}
	\end{equation*}
	and for all $k$, if $c_{z_k,z_k+1}(\eta^{(k)})= 0$ then $\eta^{(k+1)}=\eta^{(k)}$. It allows us to write that 
	\begin{align*}
		\big[\sqrt{f(\eta^{x,y_j})}-\sqrt{f(\eta )}\big]^2 & = \left(\sum_{k=0}^{n(j)-1} \Big[\sqrt{f(\eta^{(k+1)})}-\sqrt{f(\eta^{(k)})}\Big]\right)^2\\
		& \le n(j)\sum_{j=0}^{n(j)-1} \Big[\sqrt{f(\eta^{(k+1)})}-\sqrt{f(\eta^{(k)})}\Big]^2
	\end{align*}
	using Cauchy-Schwarz inequality. In particular, a piece of the total Dirichlet form appears, and we get the bound
	\begin{equation}\label{2BEestdir2}
		\bar{J}_{x,y_j}(f)\le 3|j|(2\ell +1)\mathfrak{D}_N(f) \le \frac{3}{2} p(2\ell +1)\mathfrak{D}_N(f)\le \frac32(2\varepsilon N +1)\mathfrak{D}_N(f).
	\end{equation}
	If we inject \eqref{2BEestdir1} and \eqref{2BEestdir2} in the definition \eqref{def:Dxjell} of $\mathfrak{D}_{x,j}^\ell (f_{x,j}^\ell)$, we get that
	\begin{equation}
		\frac1p \sum_{j\in\mathfrak{T}}\mathfrak{D}_{x,j}^\ell (f_{x,j}^\ell) \le \frac{6\varepsilon N+5}{2}\mathfrak{D}_N(f)\le 6 \varepsilon N \mathfrak{D}_N(f)
	\end{equation}
	as $\varepsilon N\ge 1$. Therefore, the supremum \eqref{2BEsup2} can be bounded above by
	\begin{equation*}
		\sup_f\left\lbrace\frac1p\sum_{j\in\mathfrak{T}} \Big( \widehat{\mu}_{x,j}^\ell \big( |\eta_x^\ell -\eta_{y_j}^\ell |f_{x,j}^\ell\big) - \frac{1}{24\gamma \varepsilon}\mathfrak{D}_{x,j}^\ell (f_{x,j}^\ell)\Big)\right\rbrace .
	\end{equation*}
	which is easily seen to be bounded above by
	\begin{equation*}
		\sup_{x\in \Lambda_N}\sup_{j\in\mathfrak{T}}\sup_g \left\lbrace \widehat{\mu}_{x,j}^\ell\big( |\eta_x^\ell -\eta_{y_j}^\ell |g\big) - \frac{1}{24\gamma \varepsilon}\mathfrak{D}_{x,j}^\ell (g)\right\rbrace
	\end{equation*}
	where this time the supremum is taken over density functions $g:\lbrace 0,1\rbrace^{\Lambda_{x,j}^\ell}\longrightarrow [0,+\infty ]$ with respect to the measure $\widehat{\mu}_{x,j}^\ell$. At this stage, we have a bound that is uniform on $x$, so we can take the supremum of the expectations over $x\in\Lambda_N$ in the left hand side of the inequality. As before, since the left hand term inside this supremum is bounded above, say by some constant $K>0$, we can truncate the supremum to functions $g$ that satisfy $\mathfrak{D}_{x,j}^\ell (g)\le 24K\gamma \varepsilon$ which is a correct order as it will vanish when we will make $\varepsilon$ go to 0. Writing $x=\lfloor uN\rfloor$ and $y_j=\lfloor v_jN\rfloor$, since $|y_j-x|>(\log N)^2$ we are in position to apply Corollary \ref{cor:2BEdec} which allows to replace the measure $\widehat{\mu}_{x,j}^\ell$ above by the measure $\widehat{\pi}_{\varrho (u)}^\ell\otimes \widehat{\pi}_{\varrho (v_j)}^\ell$ defined by
	\begin{equation*}
		\forall (\sigma ,\sigma ')\in \lbrace 0,1\rbrace^{\Lambda_{x,j}^\ell},\qquad \widehat{\pi}_{\varrho (u)}^\ell\otimes \widehat{\pi}_{\varrho (v_j)}^\ell (\sigma ,\sigma ') = \widehat{\pi}_{\varrho (u)}^\ell (\eta_{|\integers{-\ell}{\ell}}=\sigma)\widehat{\pi}_{\varrho (v_j)}^\ell (\eta_{|\integers{-\ell}{\ell}}=\sigma ')
	\end{equation*}
	up to an error of order $\mathcal{O}\big( \frac1N\big)$. Using the fact that $|y_j-x| = |j|(2\ell +1)\le \frac{2\varepsilon N+1}{2}$, it is not hard to see that $|u-v_j|=\mathcal{O}\big( \varepsilon +\frac{1}{N}\big)$, and using a proof similar to the one of Proposition \ref{prop:localequilibrium} one can check that the measure $\widehat{\pi}_{\varrho (v_j)}^\ell$ is close to the measure $\widehat{\pi}_{\varrho (u)}^\ell$ with an error of order $\mathcal{O}\big( \varepsilon +\frac{1}{N}\big)$. Putting all these statements together, we have a constant $C=C(\alpha ,\beta ,\ell )>0$ such that
	\begin{equation}\label{2BEestmeasures}
		\forall (\sigma ,\sigma')\in\lbrace 0,1\rbrace^{\Lambda_{x,j}^\ell},\qquad \big| \widehat{\mu}_{x,j}^\ell (\sigma ,\sigma ')-\widehat{\pi}_{\varrho (u)}^\ell\otimes \widehat{\pi}_{\varrho (u)}^\ell (\sigma ,\sigma ')\big| \le C\left(\varepsilon +\frac{1}{N}\right).
	\end{equation}
	As in the proof of the one-block estimate, we can now define a new Dirichlet form $\tilde{\mathfrak{D}}_{u,2}^\ell (g)$ with respect to $\widehat{\pi}_{\varrho (u)}^\ell\otimes\widehat{\pi}_{\varrho (u)}^\ell$, and if $\mathfrak{D}_{x,j}^\ell (g)\le 24K\gamma \varepsilon$, inequality \eqref{2BEestmeasures} implies that $\tilde{\mathfrak{D}}_{u,2}^\ell (g)\le K'(\varepsilon +\frac1N)$ for some constant $K'>0$ that depends only on $\alpha ,\beta ,\gamma$ and $\ell$. Now that we have expressed everything in terms of a measure that no longer depends on $N$ and $\varepsilon$, we can take the limits to be left to prove that
	\begin{equation*}
		\limsup_{\ell\to +\infty}\sup_{u\in [0,1]}\sup_{g : \tilde{\mathfrak{D}}_{u,2}^\ell (g)=0} \frac{1}{\ell} \widehat{\pi}_{\varrho (u)}^\ell\otimes \widehat{\pi}_{\varrho (u)}^\ell \Big( \big| |\sigma |-|\sigma '|\big| g\Big) =0.
	\end{equation*}
	Decomposing now along the hyperplanes with a fixed number of particles, this amounts to proving that
	\begin{equation}\label{2BEconcentration}
		\frac{1}{\ell} \sum_{i=0}^k |2i-k|\nu_{u,k}^\ell (i)\xrightarrow[\ell\to +\infty]{} 0
	\end{equation}
	for all $u\in [0,1]$ and all $k\ge 2\ell +1$, where $\nu_{u,k}^\ell$ is the measure defined by
	\begin{equation*}
		\nu_{u,k}^\ell (i) = \widehat{\pi}_{\varrho (u)}^\ell\otimes \widehat{\pi}_{\varrho (u)}^\ell \big( |\sigma |=i\;\big|\; |\sigma |+ |\sigma '|=k\big).
	\end{equation*}
	Conditioning with respect to the possible values at the borders of the configurations $\sigma$ and $\sigma '$, it is straightforward to prove that $\nu_{u,k}^\ell$ is concentrated around $i\simeq k/2$, so \eqref{2BEconcentration} can easily be deduced. This concludes the proof of Lemma \ref{lemma:2BE}.
\end{proof}

\appendix
\section{Technical results}

\subsection{Irreducibility of the ergodic component}
\label{sec:appergodic}
\begin{proposition}\label{prop: ergodic component irreducible}
The ergodic component $\mathcal{E}_N$ is an irreducible component for the Markov process with generator $\mathcal{L}_N$.
\end{proposition}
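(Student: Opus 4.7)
The argument splits naturally into two parts: stability of $\mathcal{E}_N$ under every positive-rate transition, and irreducibility on $\mathcal{E}_N$, which I obtain by showing that every configuration $\eta \in \mathcal{E}_N$ communicates in both directions with the fully occupied configuration $\mathbf{1} := (1, \ldots, 1)$.

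For stability I would check each type of transition separately. For a bulk swap across $\{x, x+1\}$, the positivity of $c_{x, x+1}(\eta)$ together with $\eta \in \mathcal{E}_N$ forces the local pattern $(\eta_{x-1}, \eta_x, \eta_{x+1}, \eta_{x+2})$ to be either $(1,1,0,1)$ or $(1,0,1,1)$ (using the boundary conventions $\eta_0 = \alpha$, $\eta_N = \beta$ whenever $x = 1$ or $x = N-2$), and the swap simply interchanges the two patterns, preserving ergodicity locally and hence globally. For the reservoir transitions at site $1$, the positivity of $b_\ell(\eta)$ together with $\eta \in \mathcal{E}_N$ forces $(\eta_1, \eta_2)$ to be either $(0,1)$ (injection) or $(1,1)$ (absorption), which flip to $(1,1)$ and $(0,1)$ respectively; in neither case is a pair of adjacent empty sites produced. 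The argument at site $N-1$ is symmetric.

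For irreducibility, I first show that every $\eta \in \mathcal{E}_N$ can reach $\mathbf{1}$, by induction on the number of empty sites of $\eta$. If $\eta \neq \mathbf{1}$, let $x$ be its leftmost empty site, so that $\eta_1 = \cdots = \eta_{x-1} = 1$. If $x \geq 2$, the swap $\eta \to \eta^{x-1, x}$ has rate $c_{x-1, x}(\eta) = \eta_{x-2}\eta_{x-1}(1-\eta_x)$, equal to $1$ for $x \geq 3$ and to $\alpha > 0$ for $x = 2$ via the convention $\eta_0 = \alpha$; this shifts the leftmost hole one step to the left, and the result is again in $\mathcal{E}_N$ by stability. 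Iterating brings the hole to site $1$, whereupon the left reservoir injects a particle at rate $\kappa \alpha N^{-\theta} > 0$, reducing the number of empty sites by one.

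For the reverse direction $\mathbf{1} \to \eta$, I would install the empty sites of $\eta$ one at a time \emph{from right to left}. Let $p_1 < \cdots < p_k$ be the empty sites of $\eta$, which satisfy $p_{i+1} - p_i \geq 2$ by ergodicity. Assume inductively that the current configuration agrees with $\mathbf{1}$ outside $\{p_{i+1}, \ldots, p_k\}$. If $p_i \geq 2$, the gap $p_{i+1} \geq p_i + 2 \geq 4$ ensures $\eta_2 = 1$, so the left reservoir absorbs the particle at site $1$ at positive rate, creating a fresh hole at $1$. I then transport this hole to $p_i$ via successive swaps $j \to j+1$ for $j = 1, \ldots, p_i - 1$, each of rate $(1-\eta_j)\eta_{j+1}\eta_{j+2} = \eta_{j+2}$; this rate is positive because $j + 2 \leq p_i + 1 < p_{i+1}$ guarantees $j+2$ is not an already-installed hole. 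If $p_i = 1$ no transport is needed; the case $p_k = N-1$ is handled first via the symmetric right-reservoir absorption together with right-to-left bulk swaps. The main subtlety of the proof lies entirely in this bookkeeping: the choice to install holes from right to left, combined with the ergodicity gap $p_{i+1} - p_i \geq 2$, is exactly what guarantees that each freshly injected hole can reach its target without colliding with a previously installed one.
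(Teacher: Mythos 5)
Your proof is correct, and the first two of its three steps (stability of $\mathcal{E}_N$, and the path from an arbitrary ergodic $\eta$ to $\mathbf{1}$ via ``push the leftmost hole to site $1$ and inject'') match the paper's argument almost word for word. Where you diverge is the reverse direction $\mathbf{1}\to\eta$: you build it explicitly from scratch, installing holes from right to left by absorption at site $1$ followed by rightward transport, with a careful check that the gap $p_{i+1}-p_i\ge 2$ prevents collisions. The paper instead observes that within $\mathcal{E}_N$ every forward transition has a positive-rate reverse -- bulk swaps are symmetric, and injection at site $1$ has absorption as its reverse, both with positive rate whenever the configuration is ergodic -- so the forward path can simply be traversed backwards. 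That one-line reversibility remark is exactly what saves the page of bookkeeping you carry out; your construction is in fact (up to your shortcut of placing a hole at $N-1$ directly via the right reservoir) nothing but the reversal of the forward path written out step by step, so your collision analysis is re-proving something that follows automatically once one notices reversibility. Both arguments are sound; yours is longer but also more self-contained for a reader who has not yet internalised that the restricted chain on $\mathcal{E}_N$ has no one-way transitions.
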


\begin{proof}
First, note that if we take an ergodic configuration -- that is with isolated holes --  then, performing jumps authorized by the dynamics of the generator $\mathcal{L}_N$, we go to another configuration with isolated holes. Therefore, $\mathcal{E}_N$ is stable under the dynamics.  Indeed,  if we perform a jump between two sites $x\in\Lambda_N$ and $x+1\in\Lambda_N$,  then we go from a configuration of the form $\bullets\bullets\circs\bullets$ to one of the form $\bullet\circs\bullets\bullets$ on $\llbracket x-1,x+2\rrbracket $ and we do not create two consecutive holes.  If an exchange with a reservoir takes place,  we cannot create two consecutive holes either because we chose reservoirs that can absorb a particle only if it is followed by another particle.

In order to show that $\mathcal{E}_N$ is an irreducible component,  the strategy is to show that any configuration $\eta\in\mathcal{E}_N$ can be connected to the full configuration $\bs{1}$ ($\forall x\in\Lambda_N,\; \bs{1}_x=1$) with jumps authorized by the generator $\mathcal{L}_N$.  To go from a configuration $\eta\in\mathcal{E}_N$ to  $\bs{1}$,  the idea is to take all the particles from left to right,  creating a particle at site 1 as soon as it is possible.  For this,  at each step we choose the first empty site $x$ in $\eta$ starting from the left.  If this site is in contact with the left reservoir,  then we create a particle.  Otherwise,  we let the particle at $x-1$ jump to $x$,  which is indeed possible since $\eta_{x-2}=1$ by minimality of $x$.  Repeating it several times,  we end up in the full configuration $\bs{1}$.

When we consider ergodic configurations,  all the jumps are reversible so if we want to go from the full configuration $\bs{1}$ to any configuration $\eta\in\mathcal{E}_N$,  it is enough to follow backward the path described above.
\end{proof}

\subsection{Deterministic long-range jump of particles}
\label{sec:apptransition}
In this section, we prove the following lemma, that is used to prove the two-blocks estimate.
\begin{lemma}\label{lemma:longjumppath}
Consider a configuration $\eta$ with $\eta_0=1-\eta_\ell=1$ such that $\eta$ and $\eta^{0,\ell}$ are both ergodic. Set $n=3\ell-4$,  there exists a deterministic sequence of $n$ (potentially trivial) nearest-neighbour jumps getting from $\eta$ to $\eta^{0,\ell}$. In other words, we define $\eta^{(0)}=\eta$, there exist $x_1,\dots,x_n$ such that for $1\leq k\leq n-1$,   
\[\eta^{(k)}=\begin{cases}
(\eta^{(k-1)})^{x_k,x_k+1} & \mbox{ if }c_{x_k,x_k+1}(\eta^{(k-1)})\neq 0,\\
\eta^{(k-1)} &\mbox{otherwise},
\end{cases}
\]
 and $\eta^{(n)}=\eta^{0,\ell}$.
\end{lemma}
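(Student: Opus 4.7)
The plan is to construct, for each admissible $\eta$, an explicit sequence of $\ell$ legal FEP jumps transforming $\eta$ into $\eta^{0,\ell}$, and then to pad it with $2\ell - 4$ trivial jumps to reach the required length $n = 3\ell - 4$. First, observe that the hypotheses force $\eta_{-1} = \eta_1 = 1$ (since the empty sites of $\eta^{0,\ell}$ must have occupied neighbours and $\eta^{0,\ell}_0 = 0$) and $\eta_{\ell-1} = \eta_{\ell+1} = 1$ (similarly, from ergodicity of $\eta$ at the empty site $\ell$).

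Let $h_1 < h_2 < \cdots < h_m = \ell$ denote the positions of the holes of $\eta$ in $\integers{1}{\ell}$, and set $h_0 := 0$. By ergodicity, consecutive holes satisfy $h_i \geq h_{i-1} + 2$. I would proceed in $m$ successive phases: phase $i$ consists of the $h_i - h_{i-1}$ nearest-neighbour jumps
\[
(h_i - 1,\, h_i),\quad (h_i - 2,\, h_i - 1),\quad \ldots,\quad (h_{i-1},\, h_{i-1} + 1)
\]
in that order, which slide the hole initially at position $h_i$ one step at a time down to position $h_{i-1}$. A straightforward induction on $i$ shows that phase $i$ modifies only positions in $[h_{i-1}, h_i]$ and leaves the rest of the configuration intact, so that after all $m$ phases the resulting configuration has holes exactly at $\{0, h_1, \ldots, h_{m-1}\}$, which is precisely $\eta^{0,\ell}$.

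The heart of the argument is to verify that each of these $\ell$ jumps is legal. Whenever the hole is at position $k+1$ and $\eta_k = 1$, the rate of the jump $(k, k+1)$ reduces to $\eta_{k-1}$ in the current state, so I would check inductively that $\eta_{k-1} = 1$ throughout. For the jumps $(k, k+1)$ of phase $i$ with $k > h_{i-1}$, the position $k - 1$ lies in $[h_{i-1}, k-1]$, an untouched sub-interval of phase $i$ currently filled with particles, hence $\eta_{k-1} = 1$. For the last jump $(h_{i-1}, h_{i-1}+1)$ of phase $i$, one must check that $\eta_{h_{i-1}-1} = 1$: when $i \geq 2$, this follows because the only hole that phase $i-1$ leaves in $[0, h_{i-1}]$ sits at $h_{i-2}$, and ergodicity gives $h_{i-1} - 1 \geq h_{i-2} + 1 > h_{i-2}$; when $i = 1$, this amounts to $\eta_{-1} = 1$, established above.

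After the $\sum_{i=1}^m (h_i - h_{i-1}) = \ell$ non-trivial jumps, one reaches $\eta^{0,\ell}$. I would then append $2\ell - 4$ copies of the jump on edge $(\ell-1, \ell)$: since $\eta^{0,\ell}_{\ell-1} = \eta_{\ell-1} = 1$ and $\eta^{0,\ell}_\ell = \eta_0 = 1$, a direct computation gives $c_{\ell-1,\ell}(\eta^{0,\ell}) = 0$, so each of these trailing jumps is trivial and preserves the configuration. The main obstacle is really just bookkeeping, namely keeping track of exactly which positions have been altered by earlier phases so that the kinetic constraint can be certified at every single step; no deeper difficulty arises.
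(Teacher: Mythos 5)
Your construction is correct as a proof of the lemma \emph{as literally stated}, and it is genuinely different from the paper's. The paper uses a fixed, $\eta$-\emph{independent} sequence of edges: first $x_1=1,\dots,x_{\ell-1}=\ell-1$ (sweeping a particle from $1$ to $\ell$ across the ergodic segment, ignoring trivial attempts), then $x_\ell=0,\dots,x_{2\ell-2}=\ell-2$ (a second rightward sweep from $0$), and finally $x_{2\ell-1}=\ell-3,\dots,x_{3\ell-4}=0$ (a leftward sweep carrying the ``$\bullets\circ$'' pair back to the origin). In contrast, you slide each hole $h_i$ of $\eta$ down to the previous one $h_{i-1}$, which produces exactly $\ell$ non-trivial jumps, each edge of $\integers{0}{\ell-1}$ being used exactly once but \emph{in an order that depends on $\eta$} through the hole positions $h_1,\dots,h_m$; you then pad with $2\ell-4$ trivial jumps. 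Your bookkeeping of the FEP constraint ($\eta_{k-1}=1$ when the hole sits at $k+1$) is sound, the identities $\eta_{-1}=\eta_1=\eta_{\ell-1}=1$ you invoke do follow from the ergodicity hypotheses, and the padding computation $c_{\ell-1,\ell}(\eta^{0,\ell})=0$ is correct, so the argument closes. The one caveat worth flagging is that the paper's version is slightly stronger in a way that matters downstream: in the two-blocks estimate (Lemma \ref{lemma:2BE}), the bound $\bar{J}_{x,y_j}(f)\le 3|j|(2\ell+1)\mathfrak{D}_N(f)$ is obtained by changing variables $\eta\mapsto\eta^{(k)}$ step by step, and that change of variables is a bijection (a composition of configuration-dependent involutions at \emph{fixed} edges) precisely because the $z_k$ do not depend on $\eta$. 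Your $\eta$-dependent sequence would require an additional argument at that point — for instance, partitioning the ergodic component according to the ordered tuple $(h_1,\dots,h_m)$ and observing that on each cell the path is fixed — so it is a valid but less immediately usable proof of the lemma.
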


\begin{proof}
\newcommand{\circzero}{\mbox{\stackunder[1pt]{$\circ$}{\tiny $0$}}}
\newcommand{\bulletzero}{\mbox{\stackunder[1pt]{$\bullet$}{\tiny $0$}}}
\newcommand{\starzero}{\mbox{\stackunder[1pt]{$*$}{\tiny $0$}}}
\newcommand{\circl}{\mbox{\stackunder[1pt]{$\circ$}{\tiny $\ell$}}}
\newcommand{\bulletl}{\mbox{\stackunder[1pt]{$\bullet$}{\tiny $\ell$}}}
\newcommand{\stars}{\!*}
To get from $\eta$ to  $\eta^{0,\ell}$, three series of jumps are needed.
\begin{itemize}
\item 
First, we set $x_1=1$, $x_2=2$, \dots , $x_{\ell-1}=\ell-1$, to get from 
\[\bullets\bulletzero\bullets\stars\stars\stars\stars\stars\!\bullets\circl\bullets\qquad \mbox{ to }\qquad \bullets\bulletzero\stars\stars\stars\stars\stars\!\bullets\circs\bulletl\bullets,\]
where the `` $\stars\stars\stars\stars\stars$'' piece represents an ergodic piece of the configuration containing arbitrary particles and empty sites.
This sequence of jumps is allowed, because a particle can fully cross an ergodic segment, by ignoring jumps towards another particle, and by jumping over empty sites, which is always allowed because the segment being ergodic means there is a particle right behind it, any time it tries to jump over an empty site.
\item For the same reason, we can make another particle fully cross, that will be used afterwards  to make the right empty site travel back to the origin. We therefore let $x_\ell=0, x_{\ell+1}=1,\dots, x_{2\ell-2}=\ell-2$, to get from 
\[ \bullets\bulletzero\stars\stars\stars\stars\stars\!\bullets\circs\bulletl\bullets\qquad \mbox{ to }\qquad  \bullets\starzero\stars\stars\stars\stars\overline{\bullets\circs}\bullets\bulletl\bullets.\]
Now, we only need to make the marked pair  $\overline{\bullets\circs}$ travel back to the origin.
\item To do so, we simply let $x_{2\ell-1}=\ell-3,$ $x_{2\ell}=\ell-4,\dots,x_{3\ell-4}=0$. This case requires a bit more justifications, because empty sites can only travel freely along an ergodic segment until they would meet another empty site. However, the empty site at $\ell-2$ can travel over fully occupied clusters of particles until it stops at some site $ x+1$ because the ultimate particle of the cluster at site $x$ cannot jump over it. This means that locally, successive transitions occur from $\cdots \bullets \bullets \bullets \overline{\bullets\circs}\cdots $ to $\cdots\overline{\bullets\circs} \bullets \bullets \bullets\cdots $, thus effectively making the pair $\overline{\bullets\circs}$ jump leftwards one step at a time, until it encounters an empty site to its left, where it stops at $\cdots \bullets\circs \overline{\bulletx\circs}.$ At this points, the next two jumps  over $(x,x+1)$ and over $(x-1,x)$ are canceled, because the particle at site $x$ cannot jump towards either of the neighbouring empty sites. Taking those two cancelled jumps together, however, the piece $\overline{\bullets\circs}$ has in effect switched places with the identical piece to its left, and thus can be considered having moved two steps left.
We can apply this to make the pair travel over the whole segment ``$\stars\stars\stars\stars\stars$'', thus transitionning from 
\[ \bullets\starzero\stars\stars\stars\stars\overline{\bullets\circs}\bullets\bulletl\bullets \qquad \mbox{ to }\qquad \bullets\overline{\bulletzero\circs}\stars\stars\stars\stars\stars\bullets\bulletl\bullets .\]
The jump $(x_{3\ell-4},x_{3\ell-4}+1)=(0,1)$ is then allowed, and gets us back to the configuration $\eta^{0,\ell}= \bullets \circzero\bullets \stars\stars\stars\stars\stars\!\bullets\bulletl\bullets$ as wanted.
\end{itemize}
\end{proof}

\subsection{Exponential decay of spatial correlations under the Markovian construction}\label{Appendix:CorrDecay}

In this appendix, we prove the following result.

\begin{theorem}[Decorrelation estimate]
  \label{thm:decaycorrelations}
  Let $\rho : [0,1]\longrightarrow (\frac12,1]$ be any continuous profile taking values strictly over $\frac12$. For $x\in \llbracket 2,N-1\rrbracket$, define 
  \begin{equation}\label{def:axAppendix}
    a_x:= \frac{\rho\big( \frac{x}{N}\big)+\rho\big( \frac{x-1}{N}\big)-1}{\rho\big( \frac{x-1}{N}\big)},
  \end{equation}
  and define a measure $\nu_\rho^N$ to be the distribution of an inhomogeneous Markov chain on $\lbrace 0,1\rbrace$ started from $\eta_1 \sim \mathrm{Ber}\big( \rho (\frac1N)\big)$, and with transition probabilities
  \begin{equation}\label{transition_nurho}
    \nu_\rho^N (\eta_{x+1}=1|\eta_x=1)=a_{x+1}\qquad\mbox{ and }\qquad \nu_\rho^N(\eta_{x+1}=1|\eta_x=0)=1
  \end{equation}
  for $ x\in\llbracket 1, N-2\rrbracket$. Then, under the measure $\nu_\rho^N$, spatial correlations decay exponentially fast, meaning that there exist constants $C,c>0$ depending only on the profile $\rho$, such that
  \begin{equation}\label{eq:corrdecay}
    \forall x<y\in \Lambda_N,\qquad \big| \nu_\rho^N (\eta_x=1,\eta_y=1)-\nu_\rho^N(\eta_x=1)\nu_\rho^N(\eta_y=1)\big| \le Ce^{-c(y-x)}.
  \end{equation}
\end{theorem}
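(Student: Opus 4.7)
My plan is to exploit the fact that $\nu_\rho^N$ is a two-state inhomogeneous Markov chain and reduce the covariance to a simple contracting recursion. The decisive observation is that, for each $x$, the transition matrix
\[
M_x = \begin{pmatrix} 0 & 1 \\ 1-a_x & a_x \end{pmatrix}
\]
has eigenvalues $1$ and $a_x - 1$, so its ``spectral gap'' is exactly $a_x$. Since $\rho$ is continuous on the compact $[0,1]$ with $\inf \rho =: \rho_{\min} > \tfrac12$, the definition \eqref{def:axAppendix} gives
\[
a_x \;\geq\; \frac{2\rho_{\min}-1}{\rho(\frac{x-1}{N})} \;\geq\; 2\rho_{\min}-1 \;=:\; c_0 \;>\; 0,
\]
uniformly in $x$ and $N$. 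This uniform lower bound is what will yield the uniform exponential rate.

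The key step is to compute explicitly the difference of conditional probabilities. Fix $x < y$ in $\Lambda_N$ and set, for $k \in \llbracket x, y\rrbracket$,
\[
p_k(i) := \nu_\rho^N(\eta_k = 1 \mid \eta_x = i),\qquad i \in \{0,1\}.
\]
The transitions \eqref{transition_nurho} give the recursion $p_{k+1}(i) = 1 - (1-a_{k+1})\,p_k(i)$, from which
\[
p_{k+1}(1) - p_{k+1}(0) \;=\; -(1-a_{k+1})\bigl(p_k(1) - p_k(0)\bigr).
\]
Since $p_x(1)-p_x(0) = 1$, a trivial induction yields
\[
p_y(1) - p_y(0) \;=\; (-1)^{y-x}\prod_{k=x+1}^{y}(1-a_k),
\]
and the uniform bound $1-a_k \leq 1-c_0 < 1$ gives $|p_y(1)-p_y(0)| \leq e^{-c(y-x)}$ with $c := -\log(1-c_0) > 0$.

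It then remains to translate this into the covariance bound \eqref{eq:corrdecay}. By the Markov property,
\[
\nu_\rho^N(\eta_x=1,\eta_y=1) \;=\; \nu_\rho^N(\eta_x=1)\, p_y(1),
\]
while $\nu_\rho^N(\eta_y=1) = \nu_\rho^N(\eta_x=0)p_y(0) + \nu_\rho^N(\eta_x=1)p_y(1)$. Subtracting,
\[
\nu_\rho^N(\eta_x=1,\eta_y=1) - \nu_\rho^N(\eta_x=1)\nu_\rho^N(\eta_y=1)
\;=\; \nu_\rho^N(\eta_x=1)\,\nu_\rho^N(\eta_x=0)\,\bigl(p_y(1)-p_y(0)\bigr),
\]
whose absolute value is bounded by $\tfrac14\,e^{-c(y-x)}$, giving \eqref{eq:corrdecay} with $C = \tfrac14$.

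Honestly, I do not expect any real obstacle here: everything reduces to an elementary two-state Markov computation once the uniform positivity of $a_x$ has been recorded. The only points requiring minimal care are checking that the continuity and strict supercriticality of $\rho$ give a bound on $a_x$ that is uniform both in $x$ and in $N$, and verifying the sign/shift in the recursion; both are straightforward.
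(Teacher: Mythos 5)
Your proof is correct, and it takes a genuinely more elementary route than the paper. The paper verifies that the inhomogeneous chain is \emph{uniformly elliptic} (writing the transition kernel as $\pi_{x,x+1}(u,v)=p_x(u,v)\mu_{x+1}(v)$ with explicit matrices $p_x$, bounding $p_x$ and the ellipticity integral from below by a power of $2\rho_{\min}-1$) and then invokes a black-box theorem (\cite[Proposition 1.22]{InhomMC}) to deduce exponential $\phi$-mixing. Your argument, by contrast, exploits the two-state structure directly: the conditional-probability difference $p_y(1)-p_y(0)$ satisfies the one-step contraction $p_{k+1}(1)-p_{k+1}(0)=-(1-a_{k+1})(p_k(1)-p_k(0))$, the product $\prod_{k=x+1}^{y}(1-a_k)$ is bounded by $(1-c_0)^{y-x}$ with $c_0=2\rho_{\min}-1$, and the identity
\begin{equation*}
\nu_\rho^N(\eta_x=1,\eta_y=1)-\nu_\rho^N(\eta_x=1)\nu_\rho^N(\eta_y=1)=\nu_\rho^N(\eta_x=1)\nu_\rho^N(\eta_x=0)\big(p_y(1)-p_y(0)\big)
\end{equation*}
closes the argument with the explicit constants $C=\tfrac14$ and $c=-\log(1-c_0)$. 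What the paper's route buys is generality (it would also cover chains with larger state spaces, which appear in other parts of the literature on these models); what yours buys is a self-contained, fully quantitative two-line computation with no external citation. Both uses of the hypothesis on $\rho$ are the same in spirit: continuity plus $\rho>\tfrac12$ on a compact give a uniform lower bound on $a_x$, which is the heart of the matter. All the algebra in your proposal checks out, including the bound $a_x\le1$ needed to ensure $1-a_x\ge0$ (it follows from $\rho\le1$), the recursion for $p_k(i)$, and the covariance identity.
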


To do so, we will use some general results about inhomogeneous Markov chains given in \cite{InhomMC}. The first thing to do is to show that our Markov chain $( \eta_x)_{1\le x\le N-1}$ with law $\nu_\rho^N$ is \emph{uniformly elliptic} according to the following definition. 

\begin{definition}[Uniform ellipticity]\upshape
An inhomogeneous Markov chain $X=(X_x)_{x\ge 1}$ evolving in a state-space $\frak{S}$ with transition kernels $(\pi_{x,x+1})_{x\ge 1}$ is said to be \emph{uniformly elliptic} if there exists a probability measure $\mu_x$ on $\frak{S}$,  a measurable function $p_x:\frak{S}^2\longrightarrow [0,+\infty [$ and a constant $\varepsilon_0\in (0,1)$ called \emph{ellipticity constant} such that for all $x\ge 1$,
\begin{enumerate}[label=(\roman*)]
\item $\pi_{x,x+1}(u,v)=p_x(u,v)\mu_{x+1}(v)$;
\item $0\le p_x\le\dfrac{1}{\varepsilon_0}$;
\item $\displaystyle \int_{\frak{S}} p_x(u,v)p_{x+1}(v,w)\diff\mu_{x+1}(v) >\varepsilon_0.$
\end{enumerate}
\end{definition}

The transition kernels of our Markov chain $( \eta_x)_{1\le x\le N-1}$ write
\[\pi_{x,x+1}=\begin{pmatrix}
0 & 1 \\ 
1-a_{x+1} & a_{x+1}
\end{pmatrix}.\]
We will show that it is uniformly elliptic with $\mu_x$ being the law of $\eta_x$,  that is the law of a Bernoulli random variable with parameter $\rho \big( \frac{x}{N}\big)$,  and with $p_x$ being the function (written in matrix form)
\[p_x = \begin{pmatrix}
0 & \dfrac{1}{\rho\big( \frac{x+1}{N}\big)} \vphantom{\Bigg(} \\ 
\dfrac{1}{\rho\big( \frac{x}{N}\big)} & \dfrac{a_{x+1}}{\rho\big( \frac{x+1}{N}\big)}\vphantom{\Bigg(}
\end{pmatrix} .\]
With these definitions,  one can see that condition (i) is immediately satisfied.  By the hypothesis that $\rho$ is continuous and takes values in $\big( \frac{1}{2},1\big]$, we can find $\varepsilon \mm{\in (0,\frac12)}$ so that it takes values in $\big[\frac{1}{2}+\varepsilon ,1\big]$.  It is not difficult to deduce from it that the active density field $(a_x)_{2\le x\le N-1}$ defined in \eqref{def:axAppendix} takes values in $[2\varepsilon ,1]$.  We clearly have that $0\le p_x\le 2$ so condition (ii) holds.  It remains only to prove condition (iii),  and for that define
\begin{equation*}\varphi_x(u,w) = \int_{\lbrace 0,1\rbrace} p_x(u,v)p_{x+1}(v,w)\diff\mu_{x+1}(v) .\end{equation*}
Let us compute the four possible values of this function :
\begin{itemize}
\item $\varphi_x(0,0) = p_x(0,0)p_{x+1}(0,0)\Big( 1-\rho\big( \frac{x+1}{N}\big)\Big) + p_x(0,1)p_{x+1}(1,0)\rho\big( \frac{x+1}{N}\big) = \dfrac{1}{\rho\big( \frac{x+1}{N}\big)}\ge 1 \vphantom{\Bigg(}$.
\item $\varphi_x(1,0) =  p_x(1,0)p_{x+1}(0,0)\Big( 1-\rho\big( \frac{x+1}{N}\big)\Big) + p_x(1,1)p_{x+1}(1,0)\rho\big( \frac{x+1}{N}\big) = \dfrac{a_{x+1}}{\rho\big( \frac{x+1}{N}\big)}\ge 2\varepsilon\vphantom{\Bigg(}$.
\item $\varphi_x(0,1) =p_x(0,0)p_{x+1}(0,1)\Big( 1-\rho\big( \frac{x+1}{N}\big)\Big) + p_x(0,1)p_{x+1}(1,1)\rho\big( \frac{x+1}{N}\big) = \dfrac{a_{x+2}}{\rho\big( \frac{x+2}{N}\big)}\ge 2\varepsilon\vphantom{\Bigg(}$.
\item $\varphi_x(1,1) =  p_x(1,0)p_{x+1}(0,1)\Big( 1-\rho\big( \frac{x+1}{N}\big)\Big) + p_x(1,1)p_{x+1}(1,1)\rho\big( \frac{x+1}{N}\big) \ge (2\varepsilon )^2\vphantom{\Bigg(}$.
\end{itemize}
As a consequence,  we can choose a suitable ellipticity constant $\varepsilon_0$ for which our chain $( \eta_x)_{1\le x\le N-1}$ is uniformly elliptic.

Now,  define the $\sigma$-algebras $\mathcal{F}_1^x = \sigma\big( \eta_y,\; y\le x\big)$ and $\mathcal{F}_x^\infty = \sigma\big( \eta_y,\; y\ge x\big)$.  Define also
\begin{equation*}\omega (\ell ) =\sup_x \sup \Big\lbrace \big|\nu_\rho^N (A\cap B) - \nu_\rho^N(A)\nu_\rho^N (B)\big| \; : A\in\mathcal{F}_1^x, \; B\in \mathcal{F}_{x+\ell}^\infty\Big\rbrace.\end{equation*}
By \cite[Proposition 1.22]{InhomMC}, since the chain is uniformly elliptic,  then $\omega (\ell )$ is exponentially small with $\ell$.  More precisely,  there exist constants $C,c>0$ depending only on the profile $\rho$,  such that
\begin{equation*}\omega (\ell ) \le Ce^{-c\ell} .\end{equation*}
This proves Theorem \ref{thm:decaycorrelations}. \qed

\subsection{Uniqueness of weak solutions to the hydrodynamic equations}
\label{sec:appuniqueness}

In this appendix, we aim to prove that the hydrodynamic equations with the different boundary conditions we consider admit at most one solution. Before doing it, we make the following important observation.

Throughout this section, we assume that $\rho$ and $\tilde\rho$ are two weak solutions of the fast diffusion equation $\partial_t\rho=\partial_u^2\act (\rho )$ starting from the same initial condition $\rho^\mathrm{ini}$, and with the boundary conditions corresponding to each problem. Recall that we chose the profile $\rho^\mathrm{ini}$ to be continuous and to take values in $(\frac12 ,1]$, and in the case of Dirichlet boundary conditions we also chose $\rho_-,\rho_+\in (\frac 12,1]$. As a consequence, we can find $\varepsilon >0$ such that
\begin{equation*}
	\frac12 +\varepsilon \le \rho^\mathrm{ini} \le 1\qquad\mbox{ and }\qquad \frac12 +\varepsilon \le \rho_-,\rho_+\le 1.
\end{equation*}
Applying the usual theory (see for instance \cite[Section 3.1]{Vazquez}) with these data, we know that there is a maximum principle and then both solutions $\rho ,\tilde\rho$ satisfy the same bounds 
\begin{equation}\label{boundofsolutions}
	\frac12 +\varepsilon \le \rho ,\tilde\rho \le 1
\end{equation}
for any of the boundary conditions we consider. In the sequel, for $(t,u)\in [0,T]\times [0,1]$ we denote
\begin{equation*}
	w_t(u)=\rho_t(u)-\tilde\rho_t(u)\qquad\mbox{ and also }\qquad v_t(u)=\frac{1}{\rho_t(u)\tilde\rho_t(u)}
\end{equation*}
which are both well defined by \eqref{boundofsolutions}, so that $\act (\rho )-\act (\tilde\rho )=w\times v$.

We start by presenting the proof of the uniqueness of weak solutions in the case of Dirichlet boundary conditions, and then we will present the Robin case which also includes the Neumann one.

\subsubsection{Dirichlet case}

Assume that $\rho$ and $\tilde\rho$ are two weak solutions of \eqref{eq:fast_diffusion_equation_Dirichlet} in the sense of Definition \ref{defin:weaksolDirichlet}.
Take a test function $G\in\mathcal{C}_c^{1,2}\big( [0,T]\times (0,1)\big)$ and consider the weak formulation \eqref{eq:weakformulation_Dirichlet} that both $\rho$ and $\tilde\rho$ satisfy. Performing an integration by parts which is allowed by item (i) of the definition, that is $\act (\rho ),\act (\tilde\rho )\in L^2\big([0,T],\mathcal{H}^1\big)$, and making the difference of both equations, one gets that
\begin{equation}\label{eq:uniquenessDir1}
	\langle w_T,G_T\rangle - \int_0^T\langle w_t,\partial_tG_t\rangle\diff t = -\int_0^T\big\langle \partial_u\act (\rho_t) - \partial_u\act (\tilde\rho_t),\partial_uG_t\big\rangle\diff t.
\end{equation}
Note that both sides of this equality are well defined if we only assume that $G\in L^2\big( [0,T],\mathcal{H}_0^1\big)$ and $\partial_tG\in L^2\big( [0,T]\times [0,1]\big)$. In fact, by approximating such functions by smooth and compactly supported ones, and using a limit argument one can see that \eqref{eq:uniquenessDir1} still holds when $G\in L^2\big( [0,T],\mathcal{H}_0^1\big)$ and has an $L^2$ time derivative. The goal is now to suitably choose the test function in this latter functional space so that \eqref{eq:uniquenessDir1} gives the equality of $\rho$ and $\tilde\rho$, and for this we get inspired by \cite[Section 5.3]{Vazquez}. Define
\begin{equation*}
	G_t(u)= \int_t^T w_s(u)v_s(u)\diff s\qquad\qquad \forall (t,u)\in [0,T]\times [0,1]
\end{equation*}
and let us check that $G\in L^2\big( [0,T],\mathcal{H}_0^1\big)$ and $\partial_tG\in L^2\big( [0,T]\times [0,1]\big)$. Since 
\begin{equation*} \partial_tG_t(u)= - w_t(u)v_t(u) = \act\big( \tilde\rho_t(u)\big)-\act\big( \rho_t(u)\big)
\end{equation*}
the latter condition is clearly satisfied. Besides, 
\begin{equation*}\partial_uG_t(u) = \int_t^T \big( \partial_u\act (\rho_s)-\partial_u\act (\tilde\rho_s)\big) (u)\diff s
\end{equation*}
so $\partial_uG \in L^2\big( [0,T]\times [0,1]\big)$ by item (i) of Definition \ref{defin:weaksolDirichlet}. Finally, item (iii) of Definition \ref{defin:weaksolDirichlet} implies that $G_t(0)=G_t(1)=0$ for all $t\in [0,T]$, so putting all these properties together, we deduce that indeed $G$ belongs to $L^2\big( [0,T],\mathcal{H}_0^1)$ and has an $L^2$ time derivative.

Therefore, this function $G$ can be used as a test function in \eqref{eq:uniquenessDir1}, and it yields that
\begin{equation*}
	\int_0^T \langle w_t,w_tv_t\rangle\diff t = -\int_0^T \int_0^1 \big( \partial_u\act (\rho_t)-\partial_u\act (\tilde\rho_t)\big)(u)\left(\int_t^T \big( \partial_u\act (\rho_s)-\partial_u\act (\tilde\rho_s)\big)(u)\diff s\right)\diff u\diff t,
\end{equation*}
which rewrites, by integrating the right hand side,
\begin{equation*}
	\int_0^T \langle w_t^2,v_t\rangle\diff t +\frac12 \int_0^1\left(\int_0^T \big( \partial_u\act (\rho_t)-\partial_u\act (\tilde\rho_t)\big)(u) \diff t\right)^2\diff u =0.
\end{equation*}
As both term are non-negative, this identity yields that $w=0$, \textit{i.e.} $\rho = \tilde\rho$ almost everywhere in $[0,T]\times [0,1]$. It concludes the proof of the uniqueness of weak solutions in the case of Dirichlet boundary conditions. \hfill $\qed$

\subsubsection{Robin and Neumann cases}

We prove here the uniqueness of weak solutions in the case of Robin boundary conditions. We present only this proof because one can check that it can be adapted for Neumann boundary conditions by taking $\kappa =0$. But beforehand, we need the following three technical lemmas whose proofs can be found in \cite[Section 7.2]{BPGN2020}.

\begin{lemma}\label{lemma:UniqRobin1}
	Let $\sigma\in \mathcal{C}^{2,2}\big( [0,T]\times [0,1]\big)$ be a positive function, let $h\in\mathcal{C}^2\big( [0,1]\big)$ be a function such that $h(0)=h(1)=0$, and let $\lambda\ge 0$. Then, for any $t\in (0,T]$, the problem with Robin boundary conditions
	\begin{equation}
		\label{eq:RobinProblem_lemma}
		\begin{cases}
		\partial_t\varphi + \sigma\partial_u^2\varphi = \lambda\varphi  & \mbox{ on } [0,t)\times (0,1), \\ 
		\varphi_t(\cdot )=h(\cdot ), &  \\ 
		\partial_u\varphi_s(0)=\kappa\varphi_s(0) & \mbox{ for all }s\in [0,t),\\
		\partial_u\varphi_s(1)=-\kappa\varphi_s(1) & \mbox{ for all }s\in [0,t),
		\end{cases}
	\end{equation}
	admits a unique solution $\varphi\in \mathcal{C}^{1,2}\big( [0,t]\times [0,1]\big)$. Moreover, if $0\le h\le 1$, then we have
	\begin{equation*}
		0\le \varphi_s(u) \le e^{-\lambda (t-s)}\qquad \forall (s,u)\in [0,t]\times [0,1].
	\end{equation*}
\end{lemma}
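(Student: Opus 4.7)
The plan is to reduce this backward-in-time problem to a standard forward parabolic one via the time reversal $\tau := t - s$, setting $\tilde{\varphi}(\tau, u) := \varphi(t-\tau, u)$ and $\tilde{\sigma}(\tau, u) := \sigma(t-\tau, u)$. The PDE \eqref{eq:RobinProblem_lemma} then becomes
\begin{equation*}
\partial_\tau \tilde{\varphi} = \tilde{\sigma}\, \partial_u^2 \tilde{\varphi} - \lambda \tilde{\varphi} \quad \text{on } (0,t]\times (0,1),
\end{equation*}
with initial datum $\tilde{\varphi}(0,\cdot)=h$ and the same Robin boundary conditions at $u=0$ and $u=1$. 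Since $\tilde{\sigma}$ is $\mathcal{C}^{2,2}$ and strictly positive (it is uniformly elliptic on the compact cylinder $[0,t]\times[0,1]$), and $h\in\mathcal{C}^2([0,1])$, existence and uniqueness of a classical solution $\tilde{\varphi}\in\mathcal{C}^{1,2}([0,t]\times[0,1])$ follow from standard linear parabolic theory (see, \emph{e.g.}, Ladyzhenskaya--Solonnikov--Ural'tseva, or Evans, Chapter 7). Translating back, $\varphi(s,u):=\tilde{\varphi}(t-s,u)$ is the unique solution of the original backward problem.

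For the pointwise bound, I would introduce the auxiliary function $\zeta(\tau,u):=e^{\lambda\tau}\tilde{\varphi}(\tau,u)$, which is easily checked to satisfy the pure diffusion equation $\partial_\tau \zeta = \tilde{\sigma}\,\partial_u^2 \zeta$ with the same Robin boundary conditions and initial datum $\zeta(0,\cdot)=h\in[0,1]$. I would then invoke the weak maximum principle: at an interior maximum one has $\partial_\tau \zeta\ge 0$ and $\partial_u^2\zeta\le 0$, yielding the usual contradiction unless the value is already attained at $\tau=0$; at a lateral maximum $u^\star=0$, the Robin identity $\partial_u\zeta(\tau^\star,0)=\kappa\,\zeta(\tau^\star,0)$ combined with $\partial_u\zeta(\tau^\star,0)\le 0$ (the outward normal derivative at a boundary maximum is non-positive) forces $\zeta(\tau^\star,0)\le 0$ since $\kappa\ge 0$, and symmetrically at $u^\star=1$. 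Applied to $\zeta$ and to $-\zeta$, this gives $0\le\zeta\le 1$, which translates back to $0\le\tilde{\varphi}(\tau,u)\le e^{-\lambda\tau}$, \emph{i.e.}~$0\le\varphi_s(u)\le e^{-\lambda(t-s)}$ as claimed.

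The main subtlety, rather than an obstacle, is that $h$ is only assumed to vanish at the endpoints and need not satisfy the corner compatibility conditions $\partial_u h(0)=\kappa h(0)=0$ and $\partial_u h(1)=-\kappa h(1)=0$; consequently $\tilde{\varphi}$ may fail to be $\mathcal{C}^{1,2}$ up to the two corners $(0,0)$ and $(0,1)$ of the parabolic cylinder. This does not affect the conclusion, since $\mathcal{C}^{1,2}$ regularity on $[0,t]\times[0,1]$ is meant in the interior-plus-continuity sense needed to apply the maximum principle above. Should one prefer to avoid citing classical parabolic theory for existence, uniqueness alone can be obtained by an elementary energy estimate on the difference $\tilde{w}$ of two solutions: multiplying by $\tilde{w}$ and integrating by parts on $[0,1]$ produces the non-positive Robin boundary terms $-\kappa\,\tilde{\sigma}\,\tilde{w}^2|_{u=0,1}$ and a dissipation term $-\int \tilde{\sigma}(\partial_u\tilde{w})^2$ into which the lower-order cross term $\int \partial_u\tilde{\sigma}\cdot\tilde{w}\,\partial_u\tilde{w}$ is absorbed via Young's inequality, yielding $\frac{d}{d\tau}\|\tilde{w}\|_{L^2}^2\le C\|\tilde{w}\|_{L^2}^2$ and hence $\tilde{w}\equiv 0$ by Grönwall's lemma.
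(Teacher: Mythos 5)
Your proof is correct in substance, but note first that the paper itself does \emph{not} prove Lemma \ref{lemma:UniqRobin1}: the authors state, immediately before the three technical lemmas of Appendix \ref{sec:appuniqueness}, that their proofs can be found in \cite[Section~7.2]{BPGN2020}, so there is no in-paper argument with which to compare yours. Your proposal supplies a standard, self-contained substitute along the expected lines: time reversal to a forward parabolic problem with a uniformly elliptic, smooth coefficient, classical existence and uniqueness theory, and a Robin maximum principle for the pointwise bound, obtained after removing the zero-order term via the factor $e^{\lambda\tau}$. Two small points are worth correcting. First, you describe $\partial_u\zeta(\tau^\star,0)\le 0$ as ``the outward normal derivative at a boundary maximum is non-positive''; the outward normal at $u=0$ is $-\partial_u$, so the outward normal derivative is $-\partial_u\zeta\ge 0$ (the Hopf direction), whereas what you actually use and need is the first-order condition $\partial_u\zeta(\tau^\star,0)\le 0$ that holds at a maximum on the left face --- the inequality is right, only the label is swapped. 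Second, you write ``since $\kappa\ge 0$'', but the boundary argument $\kappa\,\zeta(\tau^\star,0)\le 0\Rightarrow\zeta(\tau^\star,0)\le 0$ genuinely requires $\kappa>0$; for $\kappa=0$ (the Neumann endpoint, which the paper also invokes in Appendix \ref{sec:appuniqueness}) the weak sign argument says nothing and one must use the Hopf boundary-point lemma instead. Since the paper assumes $\kappa>0$ throughout, this is harmless for the Robin statement itself. Finally, your observation that $\mathcal{C}^{1,2}$ regularity up to the two corners $(0,0)$ and $(0,1)$ is not automatic without the Robin compatibility conditions on $h$ is accurate and worth flagging; it plays no role in the application, where $\varphi$ is only used as a test function inside space-time integrals, but it means the regularity claim of the lemma as literally stated is slightly optimistic.
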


\begin{lemma}\label{lemma:UniqRobin2}
	Let $\varphi$ be the unique solution to the problem \eqref{eq:RobinProblem_lemma}. Then, there exists a constant $K=K(\kappa ,h)>0$ such that
	\begin{equation*}
		\int_0^t\int_0^1\sigma_s(u)\big( \partial_u^2\varphi_s(u)\big)^2\diff u\diff s \le K.
	\end{equation*}
\end{lemma}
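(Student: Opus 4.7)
The natural plan is to establish the energy estimate by multiplying the PDE in \eqref{eq:RobinProblem_lemma} by $\partial_u^2 \varphi$ and integrating over $[0,t] \times [0,1]$. Rewriting the equation as $\sigma_s \partial_u^2 \varphi_s = \lambda \varphi_s - \partial_s \varphi_s$, this yields
\[
\int_0^t \int_0^1 \sigma_s(u) \big( \partial_u^2 \varphi_s(u) \big)^2 \diff u \diff s = \lambda \int_0^t \int_0^1 \varphi_s \, \partial_u^2 \varphi_s \diff u \diff s - \int_0^t \int_0^1 (\partial_s \varphi_s)(\partial_u^2 \varphi_s) \diff u \diff s.
\]
The regularity $\varphi \in \mathcal{C}^{1,2}$ from Lemma \ref{lemma:UniqRobin1} (together with $\sigma \in \mathcal{C}^{2,2}$ and the PDE, which gives control of $\partial_s \partial_u \varphi$) will justify all the integrations by parts below.

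For the first integral on the right-hand side, a spatial integration by parts combined with the Robin boundary conditions gives
\[
\int_0^1 \varphi_s \partial_u^2 \varphi_s \diff u = -\int_0^1 (\partial_u \varphi_s)^2 \diff u + \big[ \varphi_s \partial_u \varphi_s \big]_0^1 = -\int_0^1 (\partial_u \varphi_s)^2 \diff u - \kappa \big( \varphi_s(1)^2 + \varphi_s(0)^2 \big),
\]
which is non-positive. Since $\lambda \geq 0$, the first term in the RHS is therefore $\le 0$ and can be dropped. For the second integral, an integration by parts in space gives
\[
\int_0^1 (\partial_s \varphi_s)(\partial_u^2 \varphi_s) \diff u = \big[ (\partial_s \varphi_s)(\partial_u \varphi_s) \big]_0^1 - \frac{1}{2} \frac{d}{ds} \int_0^1 (\partial_u \varphi_s)^2 \diff u.
\]
The boundary terms are evaluated using $\partial_u \varphi_s(1) = -\kappa \varphi_s(1)$ and $\partial_u \varphi_s(0) = \kappa \varphi_s(0)$, which turn $(\partial_s \varphi_s)(\partial_u \varphi_s)$ into $\mp \tfrac{\kappa}{2} \partial_s (\varphi_s(\cdot)^2)$ at each endpoint, so that the whole right-hand side becomes an exact time-derivative.

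Integrating in $s$ over $[0,t]$, using the terminal condition $\varphi_t = h$ together with $h(0) = h(1) = 0$, we obtain
\[
\int_0^t \int_0^1 (\partial_s \varphi_s)(\partial_u^2 \varphi_s) \diff u \diff s = -\frac{1}{2} \|h'\|_{L^2}^2 + \frac{1}{2} \|\partial_u \varphi_0\|_{L^2}^2 + \frac{\kappa}{2}\big( \varphi_0(0)^2 + \varphi_0(1)^2\big).
\]
Substituting into the energy identity above and dropping the non-positive terms that arise (namely $\lambda \int \int \varphi \partial_u^2 \varphi$ on the one hand, and $-\tfrac12 \|\partial_u \varphi_0\|_{L^2}^2 - \tfrac{\kappa}{2}(\varphi_0(0)^2 + \varphi_0(1)^2)$ on the other), we conclude that
\[
\int_0^t \int_0^1 \sigma_s(u) \big( \partial_u^2 \varphi_s(u) \big)^2 \diff u \diff s \leq \frac{1}{2} \| h'\|_{L^2}^2,
\]
so the bound holds with $K = \tfrac{1}{2}\|h'\|_{L^2}^2$ (which in fact depends only on $h$).

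There is no serious obstacle here; the main thing to watch is a sign-chase, since the two algebraic miracles that make the argument work are that (i) the Robin conditions turn the boundary contribution at each fixed time into a perfect time-derivative of $\varphi_s(0)^2 + \varphi_s(1)^2$, and (ii) $h(0) = h(1) = 0$ eliminates the terminal boundary contributions. Everything else combines favorably thanks to $\lambda \ge 0$ and the positivity of the Robin trace term.
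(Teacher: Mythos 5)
The paper does not give a proof of this lemma — it cites \cite[Section 7.2]{BPGN2020} — so there is no internal argument to compare against. Your direct energy estimate is the natural and, as far as I can see, correct way to prove it: pairing the PDE against $\partial_u^2\varphi_s$, using the Robin conditions to convert both boundary contributions into exact $s$-derivatives of $\varphi_s(0)^2+\varphi_s(1)^2$, then exploiting $\lambda\ge 0$, the sign of the Robin trace term, and $h(0)=h(1)=0$ to discard everything except $\tfrac12\|h'\|_{L^2}^2$. The sign bookkeeping checks out, and the resulting $K$ only depends on $h$, which is consistent with (and slightly sharper than) the $K=K(\kappa,h)$ in the statement.

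The one place you should tighten up is the regularity justification. Your integration by parts on $\int_0^1(\partial_s\varphi_s)(\partial_u^2\varphi_s)\diff u$ uses the mixed partial $\partial_u\partial_s\varphi$ and the identity $\int_0^1(\partial_u\partial_s\varphi_s)(\partial_u\varphi_s)\diff u = \tfrac12\tfrac{d}{ds}\|\partial_u\varphi_s\|_{L^2}^2$. Lemma \ref{lemma:UniqRobin1} only gives $\varphi\in\mathcal{C}^{1,2}$, and the PDE, as written, expresses $\partial_s\varphi$ in terms of $\varphi$ and $\partial_u^2\varphi$ but does not directly yield $\partial_u\partial_s\varphi$ without invoking $\partial_u^3\varphi$. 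The fix is standard: since $\sigma\in\mathcal{C}^{2,2}$ and $h\in\mathcal{C}^2$, one can differentiate the equation in $u$ and apply parabolic regularity to $\partial_u\varphi$ (or work at the level of the energy $E(s)=\tfrac12\|\partial_u\varphi_s\|_{L^2}^2+\tfrac{\kappa}{2}(\varphi_s(0)^2+\varphi_s(1)^2)$ and check it is $\mathcal{C}^1$). A sentence to that effect would make the argument airtight; as written, "the PDE gives control of $\partial_s\partial_u\varphi$" slightly overstates what is immediate. Finally, a cosmetic point: when $h\equiv 0$ your $K=\tfrac12\|h'\|_{L^2}^2$ vanishes, whereas the statement asserts $K>0$; this is harmless since then $\varphi\equiv 0$, but you can simply take $K=\tfrac12\|h'\|_{L^2}^2+1$ to match the statement literally.
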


\begin{lemma}\label{lemma:UniqRobin3}
	Let $\sigma$ be a non-negative, bounded, measurable function on $[0,T]\times [0,1]$. Then, there exists a sequence $(\sigma^k)_{k\in\N}$ of positive functions in $\mathcal{C}^\infty\big( [0,T]\times [0,1]\big)$ such that
	\begin{equation*}
		\frac1k\le \sigma^k\le \|\sigma\|_\infty +\frac1k \qquad\mbox{ and }\qquad \left\|\frac{\sigma -\sigma^k}{\sqrt{\sigma^k}}\right\|_{L^2([0,T]\times [0,1])} \xrightarrow[k\to +\infty]{} 0.
	\end{equation*}
\end{lemma}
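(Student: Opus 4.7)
The idea is straightforward: approximate $\sigma$ by a smooth function and add a vanishing positive constant. The only delicate point is to ensure the $L^2$ norm of $(\sigma-\sigma^k)/\sqrt{\sigma^k}$ vanishes; this will force us to choose the mollification parameter sufficiently small compared to $1/k$.

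\textbf{Step 1: Construction.} Extend $\sigma$ by zero to a function on $\R^2$, still denoted $\sigma$; this extension remains non-negative, measurable, and bounded by $\|\sigma\|_\infty$. Fix a standard non-negative mollifier $\zeta\in\mathcal{C}_c^\infty(\R^2)$ with $\int\zeta=1$, and set $\zeta_\varepsilon(x)=\varepsilon^{-2}\zeta(x/\varepsilon)$. Define $\tilde\sigma_\varepsilon := \sigma*\zeta_\varepsilon$ on $\R^2$. By standard properties of convolution, $\tilde\sigma_\varepsilon\in\mathcal{C}^\infty(\R^2)$ with $0\le\tilde\sigma_\varepsilon\le\|\sigma\|_\infty$, and $\tilde\sigma_\varepsilon\to\sigma$ in $L^2([0,T]\times[0,1])$ as $\varepsilon\to 0$. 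For each $k\in\N$, choose $\varepsilon_k>0$ small enough that
\begin{equation*}
\|\tilde\sigma_{\varepsilon_k}-\sigma\|_{L^2([0,T]\times[0,1])}^2 \le \frac{1}{k^2},
\end{equation*}
and then set $\sigma^k := \tilde\sigma_{\varepsilon_k}+\frac{1}{k}$, restricted to $[0,T]\times[0,1]$.

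\textbf{Step 2: Verification of the pointwise bounds.} Since $0\le \tilde\sigma_{\varepsilon_k}\le\|\sigma\|_\infty$ pointwise, we immediately get
\begin{equation*}
\frac{1}{k} \le \sigma^k \le \|\sigma\|_\infty + \frac{1}{k},
\end{equation*}
and $\sigma^k\in\mathcal{C}^\infty([0,T]\times[0,1])$ by construction.

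\textbf{Step 3: Convergence in the weighted $L^2$ norm.} Since $\sigma^k\ge 1/k$, we have $1/\sigma^k\le k$ pointwise. Using $(\sigma-\sigma^k)^2\le 2(\sigma-\tilde\sigma_{\varepsilon_k})^2+2/k^2$ together with our choice of $\varepsilon_k$, we bound
\begin{equation*}
\left\|\frac{\sigma-\sigma^k}{\sqrt{\sigma^k}}\right\|_{L^2([0,T]\times[0,1])}^2 \le k\,\|\sigma-\sigma^k\|_{L^2([0,T]\times[0,1])}^2 \le 2k\Big(\|\sigma-\tilde\sigma_{\varepsilon_k}\|_{L^2}^2 + \tfrac{T}{k^2}\Big) \le \frac{2}{k} + \frac{2T}{k},
\end{equation*}
which vanishes as $k\to\infty$. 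This yields the desired convergence and concludes the proof.

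\textbf{Main obstacle.} There is no serious difficulty: the proof is a standard mollification argument. The only subtle point is that the weight $1/\sqrt{\sigma^k}$ can blow up like $\sqrt{k}$, so one must prescribe that $\|\sigma-\tilde\sigma_{\varepsilon_k}\|_{L^2}^2$ decays faster than $1/k$. This is always possible by the density of smooth functions in $L^2$, at the price of taking $\varepsilon_k$ possibly much smaller than $1/k$.
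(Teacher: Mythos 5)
Your proof is correct, and the mollification-plus-small-constant construction is the standard way to establish this lemma; the paper itself delegates the proof to [BPGN2020, Section 7.2], which proceeds along the same lines (smooth the function, shift it up by $1/k$, and tune the mollification parameter so the $L^2$ error decays faster than $k^{-1/2}$). Your Step 3 correctly isolates the key constraint: the weight $1/\sigma^k$ is only controlled by $k$, so you must pay for it by making $\|\sigma-\tilde\sigma_{\varepsilon_k}\|_{L^2}^2 = o(1/k)$, and your choice $\le 1/k^2$ does the job cleanly.
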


Assume that $\rho$ and $\tilde\rho$ are two weak solutions of \eqref{eq:fast_diffusion_equation_Robin} in the sense of Definition \ref{defin:weaksolRobin}. Take $t\in [0,T]$, a test function $G\in\mathcal{C}^{1,2}\big( [0,T]\times [0,1]\big)$ and consider the weak formulation \eqref{eq:weakformulation_Robin} that both $\rho$ and $\tilde\rho$ satisfy. Making the difference of both equations, one gets that
\begin{align*}
	\langle w_t,G_t\rangle - \int_0^t \langle w_s,\partial_tG_s\rangle\diff s = & \; \int_0^t \langle w_sv_s,\partial_u^2G_s\rangle\diff s\\
&	-\int_0^t \Big\lbrace w_s(1)v_s(1)\partial_uG_s(1)-w_s(0)v_s(0)\partial_uG_s(0)\Big\rbrace\diff s \\ & + \kappa\int_0^t \Big\lbrace -w_s(1)v_s(1)G_s(1)-w_s(0)v_s(0)G_s(0)\Big\rbrace\diff s
\end{align*}
and this rewrites exactly
\begin{multline}\label{eq:UniqRobin_diff}
	\langle w_t,G_t\rangle = \int_0^t \langle w_s,\partial_tG_s+v_s\partial_u^2G_s\rangle\diff s - \int_0^t w_s(1)v_s(1)\Big\lbrace \partial_uG_s(1)+\kappa G_s(1)\Big\rbrace \diff s\\
	+\int_0^t w_s(0)v_s(0)\Big\lbrace \partial_uG_s(0)-\kappa G_s(0)\Big\rbrace\diff s.
\end{multline}
The idea is, as before, to choose a suitable test function to deduce the uniqueness of solutions, and this will be done taking a function that satisfies a problem like \eqref{eq:RobinProblem_lemma}. We have no idea about the regularity of the function $v$, but thanks to \eqref{boundofsolutions} we know that $0< v\le 4$. Thanks to Lemma \ref{lemma:UniqRobin3} we can find a sequence of positive functions $(\sigma^k)_{k\in\N}$ in $\mathcal{C}^\infty\big( [0,T]\times [0,1]\big)$ such that
\begin{equation*}
	\frac1k\le \sigma^k\le 4+\frac1k\qquad\mbox{ and }\qquad \frac{\sigma^k-v}{\sqrt{\sigma^k}}\xrightarrow[k\to +\infty]{} 0 \quad\mbox{ in } L^2\big( [0,T]\times [0,1]\big).
\end{equation*}
Fix a function $h\in\mathcal{C}^2\big( [0,1]\big)$ with $h(0)=h(1)=0$, and consider the problem \eqref{eq:RobinProblem_lemma} when we replace $\sigma$ by $\sigma^k$ and $\lambda$ by 0. By Lemma \ref{lemma:UniqRobin1}, we know that it admits a unique solution $\varphi^k$ and that it belongs to $\mathcal{C}^{1,2}\big( [0,t]\times [0,1]\big)$. If we use this function as test function in \eqref{eq:UniqRobin_diff}, then the last two integrals in the right hand side vanish because
\begin{equation*}
	\partial_u\varphi_s^k(0)=\kappa \varphi_s^k(0)\qquad\mbox{ and }\qquad \partial_u\varphi_s^k(1)=-\kappa\varphi_s^k(1).
\end{equation*}
We can estimate the remaining term using Cauchy-Schwarz inequality in the following way
\begin{align*}
	\int_0^t \langle w_s,\partial_t\varphi_s^k + v_s\partial_u^2\varphi_s^k\rangle\diff s & = \int_0^t \langle w_s,\underbrace{\partial_t\varphi_s^k + \sigma_s^k\partial_u^2 \varphi_s^k}_{=0}\rangle\diff s + \int_0^t \big\langle w_s, (v_s-\sigma_s^k)\partial_u^2\varphi_s^k\big\rangle\diff s\\
	& \le \int_0^t \left\| w_s\frac{v_s-\sigma_s^k}{\sqrt{\sigma_s^k}}\right\|_{L^2([0,1])} \left\| \sqrt{\sigma_s^k}\partial_u^2\varphi_s^k\right\|_{L^2([0,1])}\diff s.
\end{align*}
By \eqref{boundofsolutions}, we have that $|w_s| = |\rho_s-\tilde\rho_s|\le 2$, and by Lemma \ref{lemma:UniqRobin2} we know that the second $L^2$ norm in the integral above is bounded by $\sqrt{K}$, where $K$ is some constant that depends only on $\kappa$ and $h$. Therefore, putting this in \eqref{eq:UniqRobin_diff} we deduce that
\begin{equation*}
	\langle w_t,\varphi_t^k\rangle \le 2\sqrt{K} \left\| \frac{v-\sigma^k}{\sqrt{\sigma^k}}\right\|_{L^2([0,T]\times [0,1])}.
\end{equation*}
Noticing that $\varphi_t^k=h$, and letting $k$ go to infinity, we get that
\begin{equation}\label{UniqRobin_ineqfinal}
	\langle w_t,h\rangle \le 0.
\end{equation}
Define $A_t = \big\lbrace u\in [0,1]\; : \; w_t(u)>0\big\rbrace$ for $t\in [0,T]$. We can approximate the function $\ind_{A_t}$ in $L^2\big( [0,1]\big)$ by a sequence of functions $(h^k)_{k\in\N}$ in $\mathcal{C}^2\big( [0,1]\big)$ such that $h^k(0)=h^k(1)=0$ for all $k\in\N$. As \eqref{UniqRobin_ineqfinal} holds for any of these functions, letting $k$ go to infinity shows that
\begin{equation*}
	\int_0^1 w_t^+(u)\diff u \le 0,
\end{equation*}
where $w^+=\max (0,w)$. As a consequence, $w_t\le 0$ \textit{i.e.}~$\rho_t\le \tilde\rho_t$ almost everywhere in $[0,1]$, and this is true for any $t\in [0,T]$. It shows that $\rho\le\tilde{\rho}$ almost everywhere in $[0,T]\times [0,1]$, and we can prove the converse inequality in the exact same way, so finally $\rho=\tilde\rho$ almost everywhere and it concludes the proof of the uniqueness of weak solutions.\hfill$\qed$

\subsection{Dynkin's martingale}
\label{sec:appmartingale}

\begin{proposition}\label{prop:dynkin}
	Let 
	\begin{equation*}
		G\in\begin{cases}
			\mathcal{C}_c^{1,2}\big( [0,T]\times (0,1)\big) & \mbox{ if } \theta <1,\\
			\mathcal{C}^{1,2}\big( [0,T]\times [0,1]\big) & \mbox{ if } \theta \ge 1
		\end{cases}
	\end{equation*}
	and consider the process defined by
	\begin{equation*}
		\forall t\ge 0,\qquad \bb{M}_t^N(G): = \langle m_t^N,G_t\rangle -\langle m_0^N,G_0\rangle - \int_0^t\langle m_s^N,\partial_tG_s\rangle\diff s-\int_0^tN^2\mathcal{L}_N\langle m_s^N,G_s\rangle\diff s
	\end{equation*}
	It is a mean-zero martingale with respect to the natural filtration of $(\eta (t))_{t\ge 0}$, and it satisfies
	\begin{equation}\label{martingalevanishes}
		\forall T>0,\quad \forall \delta >0,\qquad \mathbb{P}_{\nu_0^N}\left( \sup_{t\in [0,T]}\big| \mathbb{M}_t^N(G)\big| >\delta\right) \xrightarrow[N\to +\infty]{} 0.
	\end{equation}
\end{proposition}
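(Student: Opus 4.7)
The martingale property and the fact that $\mathbb{M}_0^N(G)=0$ is an immediate consequence of Dynkin's formula applied to the bounded function $\eta\mapsto\langle m_\cdot^N,G_\cdot\rangle$ on the finite state space $\Omega_N$, accelerated by $N^2$; the reference \cite[Appendix 1, Lemma 5.1]{KL} cited in the text provides this. In particular $\mathbb{E}_{\nu_0^N}[\mathbb{M}_t^N(G)]=0$.

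To prove the concentration \eqref{martingalevanishes}, the plan is to compute the predictable quadratic variation
\[
\langle \mathbb{M}^N(G)\rangle_t = \int_0^t N^2\Big\{\mathcal{L}_N\big(\langle m_s^N,G_s\rangle^2\big)-2\langle m_s^N,G_s\rangle\mathcal{L}_N\langle m_s^N,G_s\rangle\Big\}\diff s,
\]
and bound its expectation uniformly in $t\in[0,T]$, then apply Doob's maximal inequality in $L^2$. Using the standard identity $\mathcal{L}(F^2)-2F\mathcal{L}(F)=\sum_{\eta'} r(\eta,\eta')[F(\eta')-F(\eta)]^2$ for Markov generators on a finite state space and the fact that an exchange $\eta\mapsto \eta^{x,x+1}$ (resp.\ $\eta\mapsto\eta^x$) shifts $\langle m_s^N,G_s\rangle$ by $\tfrac{1}{N}(G_s(\tfrac{x+1}{N})-G_s(\tfrac{x}{N}))(\eta_{x+1}-\eta_x)$ (resp.\ $\tfrac{1}{N}G_s(\tfrac{x}{N})(1-2\eta_x)$), one obtains
\begin{align*}
\langle \mathbb{M}^N(G)\rangle_t =\; & \int_0^t\sum_{x=1}^{N-2}c_{x,x+1}(\eta(s))\Big(G_s\Big(\tfrac{x+1}{N}\Big)-G_s\Big(\tfrac{x}{N}\Big)\Big)^2(\eta_{x+1}(s)-\eta_x(s))^2\diff s\\
& +\kappa N^{-\theta}\int_0^t\Big\{b_\ell(\eta(s))\,G_s\big(\tfrac{1}{N}\big)^2+b_r(\eta(s))\,G_s\big(\tfrac{N-1}{N}\big)^2\Big\}\diff s.
\end{align*}
Since $G$ is $\mathcal{C}^{1,2}$, the first sum is bounded by $\tfrac{T}{N}\|\partial_u G\|_\infty^2$, hence of order $\mathcal{O}(N^{-1})$. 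The boundary contribution is of order $\mathcal{O}(N^{-\theta})\|G\|_\infty^2$ when $\theta\geq 1$, and vanishes identically for $N$ large when $\theta<1$ because $G_s$ is then compactly supported in $(0,1)$, so $G_s(\tfrac{1}{N})=G_s(\tfrac{N-1}{N})=0$ eventually. In all three regimes one therefore has
\[
\mathbb{E}_{\nu_0^N}\big[\langle\mathbb{M}^N(G)\rangle_T\big]\le \frac{C(G,T)}{N^{1\wedge\theta}}\xrightarrow[N\to\infty]{}0.
\]

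To conclude, Doob's maximal inequality applied to the square-integrable martingale $\mathbb{M}^N(G)$ yields, for any $\delta>0$,
\[
\mathbb{P}_{\nu_0^N}\Big(\sup_{t\in[0,T]}|\mathbb{M}_t^N(G)|>\delta\Big)\le \frac{4}{\delta^2}\mathbb{E}_{\nu_0^N}\big[\mathbb{M}_T^N(G)^2\big]=\frac{4}{\delta^2}\mathbb{E}_{\nu_0^N}\big[\langle\mathbb{M}^N(G)\rangle_T\big],
\]
which tends to $0$ by the previous estimate. The only slightly delicate point is handling the three regimes of $\theta$ uniformly, but the compact support assumption on $G$ for $\theta<1$ precisely removes the boundary contribution that would otherwise blow up; elsewhere, the prefactor $\kappa N^{-\theta}$ is itself small enough to compensate. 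No deeper argument (entropy, replacement, etc.) is needed here.
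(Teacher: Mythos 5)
Your proof follows exactly the same route as the paper: compute the predictable quadratic variation using $\mathcal{L}(F^2)-2F\mathcal{L}F$, bound the bulk contribution by $\mathcal{O}(N^{-1})$ and the boundary contribution by $\mathcal{O}(N^{-\theta})$ (vanishing identically for $\theta<1$ thanks to compact support), then conclude via Doob's $L^2$ maximal inequality. Your explicit formula for $\langle\mathbb{M}^N(G)\rangle_t$ is in fact written more carefully than the paper's (which drops both the square on the discrete gradient and the factor $(\eta_{x+1}-\eta_x)^2$, and correspondingly writes $\|\partial_u G\|_\infty$ instead of $\|\partial_u G\|_\infty^2$ in the final bound — an evident typo). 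One small slip on your side: the claimed uniform bound $C(G,T)/N^{1\wedge\theta}$ does not tend to $0$ when $\theta\le 0$; but as your own preceding sentences make clear, the boundary term vanishes \emph{exactly} for $N$ large whenever $\theta<1$, so the correct summary is $\mathbb{E}_{\nu_0^N}\big[\langle\mathbb{M}^N(G)\rangle_T\big]=\mathcal{O}(N^{-1})$ in every regime. The underlying argument is sound.
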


\begin{proof}
	Thanks to \cite[Lemma 5.1, Appendix 1.5]{KL}, we know that this process is indeed a mean-zero martingale, and that its quadratic variation is given by
	\begin{equation*}
		\big\langle \mathbb{M}^N(G)\big\rangle_t = \int_0^t N^2\Big( \mathcal{L}_N\big( \langle m_s^N,G_s\rangle^2\big)-2\langle m_s^N,G_s\rangle\mathcal{L}_N\langle m_s^N,G_s\rangle\Big)\diff s.
	\end{equation*}
	Making simple, but tedious computations, one gets that
	\begin{multline*}
		\big\langle \mathbb{M}^N(G)\big\rangle_t = \int_0^t \frac{1}{N^2}\sum_{x=1}^{N-2} c_{x,x+1}(s)\nabla_N^+G_s\Big(\frac xN\Big) \diff s \\
		+ \frac{\kappa}{N^\theta}\int_0^t \left(b_\ell (s) G_s\Big(\frac1N\Big)^2+b_r(s)G_s\Big( \frac{N-1}{N}\Big)^2\right)\diff s.
	\end{multline*} 
	Using the fact that the local function $c_{x,x+1}$ is bounded by 1, and by smoothness of $G$, if $\theta \ge 1$
	\begin{equation}\label{eq:estim_varquadratique1}
		\forall t\ge 0,\qquad \big\langle \mathbb{M}^N(G)\big\rangle_t \le \frac{t\| \partial_uG \|_\infty}{N} + \frac{2\kappa t\| G\|_\infty^2}{N^\theta}
	\end{equation}
	whereas for $\theta <1$, as $G_s$ is compactly supported the boundary terms will vanish for $N$ large enough so that we simply have
	\begin{equation}\label{eq:estim_varquadratique2}
		\forall t\ge 0,\qquad \big\langle \mathbb{M}^N(G)\big\rangle_t \le \frac{t\| \partial_uG \|_\infty}{N}.
	\end{equation}
	But notice that applying successively Markov's and Doob's $L^2$ inequalities, we have
	\begin{align*}
		\mathbb{P}_{\nu_0^N}\left( \sup_{t\in [0,T]}\big| \mathbb{M}_t^N(G)\big| >\delta\right) & \le \frac{1}{\delta^2}\E_{\nu_0^N}\left[ \bigg( \sup_{t\in [0,T]}\big|\mathbb{M}_t^N(G)\big|\bigg)^2\right]\\
		& \le \frac{4}{\delta^2}\E_{\nu_0^N}\Big[ \big|\mathbb{M}_T^N(G)\big|^2\Big]\\
		& \le \frac{4}{\delta^2} \E_{\nu_0^N}\Big[ \big\langle \mathbb{M}^N(G)\big\rangle_T\Big]
	\end{align*}
	so inequalities \eqref{eq:estim_varquadratique1} and \eqref{eq:estim_varquadratique2} imply \eqref{martingalevanishes}.
\end{proof}

\section*{Declarations}

\subsection*{Fundings}

This project is partially supported by the ANR grants
	MICMOV (ANR-19-CE40-0012) and CONVIVIALITY (ANR-23-CE40-0003)  of the French National Research Agency (ANR).

\subsection*{Competing interests}

The authors have no competing interests to declare that are relevant to the content of this article.

\subsection*{Data availability}

No datasets were generated or analysed during the current study.

\bibliographystyle{alpha}
\bibliography{reference.bib}

\begin{thebibliography}{dPBGN20}

\bibitem[BBCS16]{BaikBarraquandCorwinToufic16}
J.~Baik, G.~Barraquand, I.~Corwin, and T.~Suidan.
\newblock Facilitated exclusion process.
\newblock In {\em The Abel Symposium}, pages 1--35. Springer, 2016.
\newblock \url{https://doi.org/10.1007/978-3-030-01593-0_1}.

\bibitem[BES21]{blondel2021stefan}
O.~Blondel, C.~Erignoux, and M.~Simon.
\newblock Stefan problem for a nonergodic facilitated exclusion process.
\newblock {\em Probability and Mathematical Physics}, 2(1):127--178, 2021.
\newblock \url{https://doi.org/10.2140/pmp.2021.2.127}.

\bibitem[BESS20]{blondel2020hydrodynamic}
O.~Blondel, C.~Erignoux, M.~Sasada, and M.~Simon.
\newblock Hydrodynamic limit for a facilitated exclusion process.
\newblock In {\em Annales de l'Institut Henri Poincar{\'e}, Probabilit{\'e}s et
  Statistiques}, volume~56, pages 667--714. Institut Henri Poincar{\'e}, 2020.
\newblock \url{https://doi.org/10.1214/19-AIHP977}.

\bibitem[BGJO19]{bernardin20192low}
C.~Bernardin, P.~Gonçalves, and B.~Jiménez-Oviedo.
\newblock Slow to {Fast} {Infinitely} {Extended} {Reservoirs} for the
  {Symmetric} {Exclusion} {Process} with {Long} {Jumps}.
\newblock {\em Markov Processes and Related Fields}, 25(2):217--274, 2019.
\newblock \url{https://math-mprf.org/journal/articles/id1535/}.

\bibitem[BM09]{BasuMohanty09}
U.~Basu and P.~K. Mohanty.
\newblock Active-absorbing-state phase transition beyond directed percolation:
  A class of exactly solvable models.
\newblock {\em Physical Review E}, 79(4):041143, 2009.
\newblock \url{https://doi.org/10.1103/PhysRevE.79.041143}.

\bibitem[BMNS17]{baldasso2017exclusion}
R.~Baldasso, O.~Menezes, A.~Neumann, and R.~Souza.
\newblock Exclusion process with slow boundary.
\newblock {\em Journal of Statistical Physics}, 167(5):1112--1142, 2017.
\newblock \url{https://doi.org/10.1007/s10955-017-1763-5}.

\bibitem[CMRT09]{reviewKCM}
N.~Cancrini, F.~Martinelli, C.~Roberto, and C.~Toninelli.
\newblock Kinetically constrained models.
\newblock In V.~Sidoravi{\v{c}}ius, editor, {\em New Trends in Mathematical
  Physics}, pages 741--752, Dordrecht, 2009. Springer Netherlands.
\newblock \url{https://doi.org/10.1007/978-90-481-2810-5_47}.

\bibitem[CZ19]{Chen2019}
D.~Chen and L.~Zhao.
\newblock The invariant measures and the limiting behaviors of the facilitated
  {TASEP}.
\newblock {\em Statistics $\&$ Probability Letters}, 154:108557, 2019.
\newblock \url{https://doi.org/10.1016/j.spl.2019.108557}.

\bibitem[Der07]{Derrida_2007}
B.~Derrida.
\newblock Non-equilibrium steady states: fluctuations and large deviations of
  the density and of the current.
\newblock {\em Journal of Statistical Mechanics: Theory and Experiment},
  2007(07):P07023–P07023, July 2007.
\newblock \url{http://dx.doi.org/10.1088/1742-5468/2007/07/P07023}.

\bibitem[Der11]{Derrida_2011}
B.~Derrida.
\newblock Microscopic versus macroscopic approaches to non-equilibrium systems.
\newblock {\em Journal of Statistical Mechanics: Theory and Experiment},
  2011(01):P01030, January 2011.
\newblock \url{http://dx.doi.org/10.1088/1742-5468/2011/01/P01030}.

\bibitem[dO05]{oliveira}
M.~de~Oliveira.
\newblock Conserved lattice gas model with infinitely many absorbing states in
  one dimension.
\newblock {\em Phys. Rev. E}, 71:016112, 01 2005.
\newblock \url{https://doi.org/10.1103/PhysRevE.71.016112}.

\bibitem[DO23]{InhomMC}
{D. Dolgopyat} and {O. Sarig}.
\newblock {\em Local {Limit} {Theorems} for {Inhomogeneous} {Markov} {Chains}},
  volume 2331 of {\em Lecture {Notes} in {Mathematics}}.
\newblock Springer, 1 edition, August 2023.
\newblock \url{https://doi.org/10.1007/978-3-031-32601-1}.

\bibitem[dPBGN20]{BPGN2020}
R.~de~Paula, L.~Bonorino, P.~Gonçalves, and A.~Neumann.
\newblock Hydrodynamics of porous medium model with slow reservoirs.
\newblock {\em Journal of Statistical Physics}, 179, 05 2020.
\newblock \url{https://doi.org/10.1007/s10955-020-02550-y}.

\bibitem[EM24]{EM24}
C.~Erignoux and B.~Massouli\'e.
\newblock Cutoff for the transience time of the facilitated exclusion process.
\newblock {\em Preprint}, 2024.
\newblock \href{https://doi.org/10.48550/arXiv.2403.20010}{arXiv:2403.20010}.

\bibitem[ESZ24]{erignoux_mapping_2024}
C.~Erignoux, M.~Simon, and L.~Zhao.
\newblock Mapping hydrodynamics for the facilitated exclusion and zero-range
  processes.
\newblock {\em The Annals of Applied Probability}, 34(1B):1524--1570, 2024.
\newblock \url{https://doi.org/10.1214/23-AAP1997}.

\bibitem[EZ23]{erignoux2023fluctuations}
C.~Erignoux and L.~Zhao.
\newblock Stationary fluctuations for the facilitated exclusion process.
\newblock 2023.
\newblock arXiv preprint
  \href{https://doi.org/10.48550/arXiv.2305.13853}{arXiv:2305.13853}.

\bibitem[GLS19]{2019JSMTE}
S.~{Goldstein}, J.~L. {Lebowitz}, and E.~R. {Speer}.
\newblock {Exact solution of the facilitated totally asymmetric simple
  exclusion process}.
\newblock {\em Journal of Statistical Mechanics: Theory and Experiment},
  12(12):123202, December 2019.
\newblock \url{https://doi.org/10.1088/1742-5468/ab363f}.

\bibitem[GLS21]{Goldstein2020}
S.~Goldstein, J.~L. Lebowitz, and E.~R. Speer.
\newblock The discrete-time facilitated totally asymmetric simple exclusion
  process.
\newblock {\em Pure and Applied Functional Analysis}, 6(1):177--203, 2021.
\newblock \url{http://yokohamapublishers.jp/online2/oppafa/vol6/p177.html}.

\bibitem[GLS22]{Goldstein2022}
S.~Goldstein, J.~L. Lebowitz, and E.~R. Speer.
\newblock Stationary states of the one-dimensional discrete-time facilitated
  symmetric exclusion process.
\newblock {\em Journal of Mathematical Physics}, 63(8):083301, 2022.
\newblock \url{https://doi.org/10.1063/5.0085528}.

\bibitem[GLS24]{Goldstein_2024}
S~Goldstein, J~L Lebowitz, and E~R Speer.
\newblock Approach to hyperuniformity of steady states of facilitated exclusion
  processes.
\newblock {\em Journal of Physics: Condensed Matter}, 36(34):345402, may 2024.
\newblock \url{https://doi.org/10.1088/1361-648X/ad4b83}.

\bibitem[Gon19]{goncalves2019book}
P.~Gonçalves.
\newblock Hydrodynamics for symmetric exclusion in contact with reservoirs.
\newblock In {\em Stochastic Dynamics Out of Equilibrium, Springer Proceedings
  in Mathematics \& Statistics}, pages 137--205. Springer Cham, 07 2019.
\newblock \url{https://doi.org/10.1007/978-3-030-15096-9_4}.

\bibitem[GPV88]{guo1988nonlinear}
M.~Z. Guo, G.~C. Papanicolaou, and S.~R.~S. Varadhan.
\newblock Nonlinear diffusion limit for a system with nearest neighbor
  interactions.
\newblock {\em Communications in Mathematical Physics}, 118(1):31--59, 1988.
\newblock \url{https://doi.org/10.1007/BF01218476}.

\bibitem[KL99]{KL}
C.~Kipnis and C.~Landim.
\newblock {\em Scaling limits of interacting particle systems}.
\newblock Grundlehren der mathematischen Wissenschaften. Springer, 1 edition,
  1999.
\newblock \url{https://doi.org/10.1007/978-3-662-03752-2}.

\bibitem[Lub01]{lubeck}
S.~Lubeck.
\newblock Scaling behavior of the absorbing phase transition in a conserved
  lattice gas around the upper critical dimension.
\newblock {\em Physical review. E, Statistical physics, plasmas, fluids, and
  related interdisciplinary topics}, 64, 04 2001.
\newblock \url{https://doi.org/10.1103/PhysRevE.64.016123}.

\bibitem[RPSV00]{RossiPastorVespignani00}
M.~Rossi, R.~Pastor-Satorras, and A.~Vespignani.
\newblock Universality class of absorbing phase transitions with a conserved
  field.
\newblock {\em Physical review letters}, 85(9):1803, 2000.
\newblock \url{https://doi.org/10.1103/PhysRevLett.85.1803}.

\bibitem[RS03]{ritortS2003glassy}
F.~Ritort and P.~Sollich.
\newblock Glassy dynamics of kinetically constrained models.
\newblock {\em Advances in Physics}, 52(4):219--342, 2003.
\newblock \url{https://doi.org/10.1080/0001873031000093582}.

\bibitem[TB07]{biroli2007}
C.~Toninelli and G.~Biroli.
\newblock Jamming percolation and glassy dynamics.
\newblock {\em J Stat Phys}, 126:731--763, 2007.
\newblock \url{https://doi.org/10.1007/s10955-006-9177-9}.

\bibitem[Vaz07]{Vazquez}
J.L. Vazquez.
\newblock {\em The Porous Medium Equation: Mathematical Theory}.
\newblock Oxford Mathematical Monographs. Clarendon Press, 2007.
\newblock \url{https://doi.org/10.1093/acprof:oso/9780198569039.001.0001}.

\end{thebibliography}

\end{document}